\renewcommand{\AA}{\mathbb{A}}
\newcommand{\N}{\mathbb{N}}
\newcommand{\PP}{\mathbb{P}}
\newcommand{\Q}{\mathbb{Q}}
\newcommand{\Z}{\mathbb{Z}}
\newcommand{\sF}{\mathcal{F}}
\newcommand{\sG}{\mathcal{G}}
\newcommand{\sO}{\mathcal{O}}
\newcommand{\sQ}{\mathcal{Q}}
\newcommand{\Cone}{\operatorname{Cone}}
\newcommand{\Chow}{\operatorname{\mathbf{Chow}}}
\newcommand{\KMM}{\operatorname{\mathbf{KMM}}}
\newcommand{\KM}{\operatorname{\mathbf{KM}}}
\newcommand{\DM}{\operatorname{\mathbf{DM}}}
\newcommand{\Spec}{\operatorname{Spec}}
\newcommand{\Proj}{\operatorname{\mathbf{Proj}}}
\newcommand{\Sm}{\operatorname{\mathbf{Sm}}}
\newcommand{\eff}{{\operatorname{eff}}}
\newcommand{\ch}{{\operatorname{ch}}}
\newcommand{\CH}{{\operatorname{CH}}}
\renewcommand{\lim}{\operatornamewithlimits{\varprojlim}}
\newcommand{\ol}{\overline}
\renewcommand{\epsilon}{\varepsilon}
\newcounter{spec}
{\end{list}}%
\newtheorem{lemma}{Lemma}[section]
\newtheorem{thm}[lemma]{Theorem}
\newtheorem{prop}[lemma]{Proposition}
\newtheorem{cor}[lemma]{Corollary}
\theoremstyle{definition}
\theoremstyle{remark}
\newtheorem{example}[lemma]{Example}
\numberwithin{equation}{section}
\title[Derived invariants and motives, Part \rm\,I\,]{Derived invariants and motives, Part \rm\,I\, Integral Grothendieck Riemann-Roch and non-commutative motives}\author{Keiho Matsumoto}
\begin{document}
\maketitle

\begin{abstract}
The goal of this series of papers is to give a new non-commutative approach to problems about the density of reductions such as the conjecture of Joshi-Rajan, and the generalization of the conjecture of Serre. In this paper, we prove integral Grothendieck Riemann-Roch which was proved by Pappas in the case $\ch(k)=0$. As a corollary we prove an integral analogue of Kontsevich's comparison theorem, and we show that if a smooth projective variety $X$ has a full exceptional collection then there is an explicit formula of the motive of $X$ up to bounded torsion.
\end{abstract}

Keywords: motives, derived category, non-commutative algebraic geometry

2020 Mathematics Subject Classification: Primary 14C15, 14F08

\section{Introduction}
The goal of this series of papers is to give a new non-commutative approach to problems about the density of reductions such as the conjecture of Joshi-Rajan \cite[Conj.4.1.1]{Joshi}, and a generalization of the conjecture of Serre \cite[Conj.3.1.1]{Joshi}. The density of reductions is related to the representation theorem of algebraic groups via the global Langlands problem. In this article, we are interested in the comparison between Chow motives and non-commutative motives. With $\Q$-coefficient, the comparison was settled by Kontsevich \cite{Kont09} and Tabuada \cite{Tab14}. They showed that there is a fully faithful functor from the $\Q(1)$-orbit category of Chow motives with $\Q$-coefficients to the category of non-commutative motives with $\Q$-coefficients (see \cite{Tab14}).
This fully faithful functor arises from the Grothendieck Riemann-Roch theorem \cite{BorelSerre}, \cite{ManinGRR}. The purpose of this paper is to prove an integral analogue of it via integral Grothendieck Riemann-Roch theorem. 

To formulate the Grothendieck Riemann-Roch theorem \cite{BorelSerre}, Grothendieck uses the Chern character $\ch(E)$ and Todd class $Td(E)$ of a vector bundle $E$ on a smooth variety. For a smooth projective variety $X$ of dimension $d$ over a field $k$ and a vector bundle $E$ on $X$, we will see that the Chern character $l!\cdot \ch(E)$ and Todd class $T_{l}\cdot Td(E)$ are well-defined in the Chow group $\CH^*(X)$ of $X$ with integral-coefficients for any $l \geq d$ where we set $T_m:=\displaystyle\prod_{p:\text{prime number}} p^{[\frac{m}{p-1}]}$.

For natural numbers $d$ and $e\geq d$, we denote the category of smooth projective varieties of dimension less than or equal to $d$ that can be embedded in the projective space $\PP^e_k$ by $\Sm\Proj^{\leq d}_{(e)}(k)$. 

Our main goal is the following theorem.

\begin{thm}\label{GRRfinite}(Integral Grothendieck Riemann-Roch theorem)
Let $f:X \to Y$ be a projective morphism of smooth projective varieties over a field $k$. We assume $X,Y \in \Sm\Proj^{\leq d}_{(e)}(k)$. Then for all $x\in K_0(X)$ there exist an equation
\[
 f_*\bigl( l!^2\cdot (T_l)^2 \cdot \ch(x)\cdot Td(T_X)\bigr) = l!^2\cdot (T_l)^2 \cdot \ch(f_*x)\cdot Td(T_Y)
\]
in $\CH^*(Y)$ for any $l\geq d+e$.
\end{thm}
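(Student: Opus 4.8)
The plan is to reduce the statement to the classical rational Grothendieck--Riemann--Roch theorem of Borel--Serre and to control denominators by an explicit bound depending only on $d$ and $e$. First I would recall that the rational GRR gives, in $\CH^*(Y)_\Q$, the identity $f_*(\ch(x)\cdot Td(T_X)) = \ch(f_*x)\cdot Td(T_Y)$, so the integral statement is equivalent to showing that both sides of the displayed equation, which a priori live in $\CH^*(Y)_\Q$, actually lie in the image of $\CH^*(Y)\to\CH^*(Y)_\Q$, and that the equality already holds before inverting any primes once we multiply by the universal factor $l!^2(T_l)^2$. The first ingredient is the integrality assertion announced in the introduction: for a smooth projective $X$ of dimension $\le d$ and a vector bundle $E$ on $X$, the classes $l!\cdot\ch(E)$ and $T_l\cdot Td(T_X)$ are integral for $l\ge d$. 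This follows from the standard estimates on denominators of the components of the Chern character (the degree-$i$ component of $\ch$ has denominator dividing $i!$, hence $l!\cdot\ch$ is integral in degrees $\le l$, and $\CH^{>d}(X)=0$) together with the analogous von~Staudt--type estimate for the Todd class, whose degree-$i$ denominator divides $T_i\mid T_l$. Consequently $l!\cdot(T_l)\cdot\ch(x)\cdot Td(T_X)\in\CH^*(X)$ for $l\ge d$, and likewise with $Y$ in place of $X$.

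Next I would factor $f$ through a closed immersion followed by a projection, as in the usual proof of GRR: since $X,Y\in\Sm\Proj^{\le d}_{(e)}(k)$, the morphism $f$ factors as $X\xrightarrow{\ \Gamma_f\ }X\times Y\hookrightarrow \PP^e_k\times Y\to Y$, where the first map is the graph (a regular closed immersion of codimension $\le d$), the second is a closed immersion coming from the chosen embedding $X\hookrightarrow\PP^e_k$, and the last is the projection $p\colon\PP^e_k\times Y\to Y$. It then suffices to prove the $l!^2(T_l)^2$-integral GRR separately for a closed immersion $i\colon Z\hookrightarrow P$ of smooth projective varieties with $Z$ of dimension $\le d$ and $P$ of dimension $\le d+e$, and for the projection $p\colon\PP^e_k\times Y\to Y$. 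For the projection one has an explicit description of $p_*$ via the projective bundle formula and the known Todd class of the relative tangent bundle $T_{\PP^e}$; the denominators introduced are controlled by $T_e$ and $e!$, both dividing $T_{d+e}$ and $(d+e)!$. For the closed immersion one uses the deformation to the normal cone and the self-intersection formula $i^*i_*(-)=(-)\cdot\lambda_{-1}(N^\vee)$; the key point is that the discrepancy between $i_*(\ch(x)Td(T_Z))$ and $\ch(i_*x)Td(T_P)$ is, after the usual manipulation, governed by the Todd class of the normal bundle $N_{Z/P}$, whose rank is $\le d+e$ and whose Todd class therefore acquires denominators dividing $T_{d+e}$, while the Chern character contributes denominators dividing $(d+e)!$.

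The main obstacle, and the reason for the squared factor $l!^2(T_l)^2$ rather than a single copy, is bookkeeping of denominators across the composition: each of the two elementary steps (closed immersion, projection) requires its own clearing factor $l!\cdot T_l$ to make the relevant Chern character and Todd class integral on the intermediate space $\PP^e_k\times Y$ (which has dimension up to $d+e$, hence the hypothesis $l\ge d+e$), and composing the two push-forwards multiplies these factors. Concretely, one must check that $f_*=p_*\circ(i_P)_*\circ(\Gamma_f)_*$ is compatible with the clearing factors in the sense that applying GRR to $i:=(i_P)\circ\Gamma_f$ with factor $l!\,T_l$ and then to $p$ with another factor $l!\,T_l$ yields exactly the factor $l!^2(T_l)^2$ in the statement, using that $f_*$ is a ring-module map for the $\CH^*(Y)$-action so that the already-integral factor on $Y$ can be pulled outside. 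A secondary technical point is to verify that the intermediate classes, e.g. $(\Gamma_f)_*(l!\,T_l\cdot\ch(x)Td(T_X))$, are integral on $X\times Y$; this is where one invokes that $X\times Y$ still embeds in a projective space (of dimension bounded in terms of $e$) and has dimension $\le d+e$, so the same integrality estimates apply, and the projective-space ambient is only needed to stay within $\Sm\Proj^{\le\,\cdot}_{(\cdot)}(k)$ rather than for the argument itself. Once these compatibilities are arranged, the integral identity in $\CH^*(Y)$ follows by clearing denominators from the rational GRR identity, since both sides become integral and their difference, being $l!^2(T_l)^2$ times a class that vanishes rationally, vanishes in $\CH^*(Y)$ as $\CH^*(Y)$ is torsion-free modulo its torsion subgroup — more precisely, the difference is killed by multiplication by a unit after clearing, hence is zero.
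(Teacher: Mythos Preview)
Your overall factorization strategy (closed immersion followed by projection from a trivial projective bundle) matches the paper's, and your identification of the relevant denominator bounds for $l!\cdot\ch$ and $T_l\cdot Td$ is correct. However, there is a genuine gap in the final step, and it is fatal.

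You argue that once both sides of the displayed equation are shown to lie in the image of $\CH^*(Y)\to\CH^*(Y)_\Q$, the integral equality follows from the rational GRR identity because ``the difference, being $l!^2(T_l)^2$ times a class that vanishes rationally, vanishes in $\CH^*(Y)$.'' This inference is false. Chow groups of smooth projective varieties typically have torsion: if $\alpha,\beta\in\CH^*(Y)$ satisfy $\alpha=\beta$ in $\CH^*(Y)_\Q$, one can only conclude that $\alpha-\beta$ is torsion, not that it is zero. Your parenthetical ``$\CH^*(Y)$ is torsion-free modulo its torsion subgroup'' is a tautology and does not rescue the argument, and ``the difference is killed by multiplication by a unit after clearing'' has no content here. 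In short, integrality of both sides plus rational equality does \emph{not} yield integral equality, and this is precisely the difficulty an integral GRR theorem must overcome.

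The paper therefore does not deduce the result from rational GRR at all. Instead it proves the integral identity \emph{directly} for each elementary morphism, working throughout in $\CH^*$ with $\Z$-coefficients. For the zero section $X\hookrightarrow\PP_X(N\oplus 1)$ it uses the Koszul resolution of $f_*E$ and the integral identity $T_l\cdot\sum_p(-1)^p\ch(\wedge^p\sQ^\vee)=T_l\cdot Td(\sQ)^{-1}c_d(\sQ)$ (Example~\ref{exexex}), together with the self-intersection formula, to obtain an honest equality in $\CH^*$. For general closed immersions it transports this via deformation to the normal cone, again as an equality of integral cycles (Proposition~\ref{GRRembedding}). For the projection $Y\times\PP^m\to Y$ it uses the surjectivity of $K_0(Y)\otimes K_0(\PP^m)\to K_0(Y\times\PP^m)$ and Hirzebruch--Riemann--Roch for $\sO(n)$ on $\PP^m$ to check the identity on generators (Proposition~\ref{GRRprojection}). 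Each of these is an equation in integral Chow groups from the start; no appeal to $\CH^*_\Q$ is made. The squared factor $l!^2(T_l)^2$ arises not from composing two clearing steps as you suggest, but because the integral manipulations within a \emph{single} step (e.g.\ the tensor-product formula $l!^2\ch(E\otimes E')=l!\ch(E)\cdot l!\ch(E')$, or the product $(T_l\cdot Td)(T_l\cdot Td^{-1})=T_l^2$) already require the square.
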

In the case $\ch(k)=0$, Theorem~\ref{GRRfinite} is can be deduced from Pappas's integral Grothendieck Riemann-Roch \cite[Theorem~2.2]{Papas07}. His proof use resolution of singularities and Bertini's theorem thus is not available in positive characteristic. We recall his theorem as Theorem~\ref{PapasintegralGRR} in the appenIn thdix.

Let us mention applications of our integral Grothendieck Riemann-Roch theorem. We denote by $\KM(k)$ the category of Gillet-Soul{\'e}'s $K$-motives (see \cite{KMOTIVE}, or our Section~\ref{section:IntKonThm}), and we denote by $\KM^{\leq d}_{(e)}(k)$ the smallest full subcategory of $\KM(k)$ which contains the image of the functor $\Sm\Proj^{\leq d}_{(e)}(k) \to \KM(k)$ and is closed under finite coproducts. Recall $\KM(k)$ is the idempotent completion of a category whose objects are smooth projective varieties, and for smooth projective $X$ and $Y$, we have $\hom_{\KM(k)}(X, Y) = K_0(X\times Y)$ (see section~\ref{section:IntKonThm}). We denote by $\KMM(k)$ the category of non-commutative motives (see \cite{Tab14}). 

\begin{cor}(Integral analogue of Kontsevich`s comparison theorem)
We assume that the base field $k$ is perfect. For natural numbers $d$ and $e$, $l\geq 2d+e$, there is a ${\Z[\frac{1}{(l+1)!}]}$-linear functor 
\[
\Phi_{\Z[\frac{1}{(l+1)!}]}:\KM^{\leq d}_{(e)}(k)_{\Z[\frac{1}{(l+1)!}]} \to \Chow(k)_{\Z[\frac{1}{(l+1)!}]}/-\otimes T_{\Z[\frac{1}{(l+1)!}]}
\]
where $T_{\Z[\frac{1}{(l+1)!}]}$ is the Tate motive. The functor $\Phi_{\Z[\frac{1}{(l+1)!}]}$ making the following diagram commute:
\[
\xymatrix{
{\textbf{dgcat}}(k) \ar[d]_{U}&&\Sm\Proj^{\leq d}_{(e)}(k)\ar[d]  \ar[ll]_{perf_{dg}(-)} & & \\
\KMM(k)_{\Z[\frac{1}{(l+1)!}]} &&\KM^{\leq d}_{(e)}(k)_{\Z[\frac{1}{(l+1)!}]} \ar[rr]_-{\Phi_{{\Z[\frac{1}{(l+1)!}]}}} \ar[ll]^{\theta} & &  \Chow(k)_{\Z[\frac{1}{(l+1)!}]}/-\otimes T_{\Z[\frac{1}{(l+1)!}]}
}
\]
where $\theta$ is the natural fully faithful functor. After applying $-\otimes \Q$, the lower row is compatible with the fully faithful functor $\Chow(k)_{\Q}/-\otimes T_{\Q} \to \KMM(k)_{\Q}$ described in \cite[section~8]{Tab14}.
\end{cor}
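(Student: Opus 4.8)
The plan is to deduce the corollary from Theorem~\ref{GRRfinite} by essentially unwinding how the $\Q$-coefficient comparison of Kontsevich--Tabuada is constructed, and checking that every place where one inverts $\N$ can be replaced by inverting the single integer $(l+1)!$ once $l \geq 2d+e$. Recall that a morphism $X \to Y$ in $\KM^{\leq d}_{(e)}(k)$ is given by a class $x \in K_0(X \times Y)$, and the candidate functor $\Phi$ should send $X$ to its Chow motive $h(X)$ (in the quotient by $-\otimes T$) and send the correspondence $x$ to the class
\[
\pi_{Y*}\Bigl( \ch(x) \cdot \sqrt{Td(T_{X \times Y})}\Bigr) \in \CH^*(X \times Y)_{\Z[\frac{1}{(l+1)!}]} = \Hom_{\Chow(k)_{\Z[\frac{1}{(l+1)!}]}}(h(X), h(Y))
\]
modulo Tate twists, exactly as in \cite[section~8]{Tab14}. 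The first step is therefore to check that this assignment actually lands in $\CH^*(X\times Y)_{\Z[\frac{1}{(l+1)!}]}$: by the discussion preceding Theorem~\ref{GRRfinite}, $l!\cdot \ch(E)$ and $T_l \cdot Td(E)$ are integral for $l\geq \dim$, and since $X\times Y$ has dimension $\leq 2d$ and embeds in a projective space of dimension bounded in terms of $e$ (Segre embedding), the relevant bound is $l \geq 2d+e$; a square root of the Todd class costs at most another factor dividing a power of $(l+1)!$, and $(l+1)! $ is divisible by $l!$ and by $T_l = \prod_p p^{[m/(p-1)]}$ for $m\leq l$, so all denominators are invertible in $\Z[\frac{1}{(l+1)!}]$.

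The second step is functoriality: one must show that this assignment respects composition of correspondences, i.e.\ that for $x \in K_0(X\times Y)$ and $y\in K_0(Y\times Z)$ the Chow correspondence attached to $y\circ x$ (the $K$-theoretic convolution $\pi_{XZ*}(\pi_{XY}^*x \cdot \pi_{YZ}^*y)$) equals the composite of the Chow correspondences attached to $x$ and to $y$. This is precisely the point where Grothendieck--Riemann--Roch is used: in the $\Q$-linear setting it follows from GRR for the projection $X\times Y\times Z \to X\times Z$, together with multiplicativity of the Chern character and the normalization by $\sqrt{Td}$, which makes the Todd correction factors telescope. Here I would instead invoke Theorem~\ref{GRRfinite} applied to that projection; the theorem as stated carries the correction factor $l!^2 (T_l)^2$ on both sides, so after inverting $(l+1)!$ these factors become invertible and cancel, yielding the required identity in $\CH^*(X\times Z)_{\Z[\frac{1}{(l+1)!}]}$. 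One also checks compatibility with identities (the class of the diagonal goes to the identity correspondence after the Todd normalization) and additivity, so that $\Phi$ extends to the idempotent-and-finite-coproduct completion defining $\KM^{\leq d}_{(e)}(k)$; $\Z[\frac{1}{(l+1)!}]$-linearity is immediate from the construction.

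The third step is the commutativity of the diagram. The right-hand triangle --- that the composite $\Sm\Proj^{\leq d}_{(e)}(k) \to \KM^{\leq d}_{(e)}(k) \xrightarrow{\Phi} \Chow(k)/-\otimes T$ agrees with the usual Chow-motive functor --- is built into the construction, since on an actual morphism $f\colon X\to Y$ the class $[\mathcal O_{\Gamma_f}]\in K_0(X\times Y)$ is sent, via GRR for $f$, to the graph correspondence $[\Gamma_f]$ up to Tate twists. The left-hand square, involving the universal additive invariant $U\colon \textbf{dgcat}(k) \to \KMM(k)$, the functor $perf_{dg}$, and the comparison $\theta\colon \KM^{\leq d}_{(e)}(k)\to \KMM(k)$, is a formal consequence of the fact that $\KMM$ receives $K$-theory as its corepresentable invariant together with $\theta$ being the canonical (fully faithful, after $\otimes\Q$) functor of \cite{Tab14}; here there is nothing new to prove beyond tracking coefficients. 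Finally, the last sentence --- that after $-\otimes\Q$ the functor $\Phi$ is compatible with Tabuada's fully faithful $\Chow(k)_\Q/-\otimes T_\Q \to \KMM(k)_\Q$ --- follows because our $\Phi$ is defined by the very same GRR formula, so $\otimes\Q$ recovers his functor on the nose.

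I expect the main obstacle to be the second step, proving functoriality with integral (localized) coefficients: one has to be careful that the square-root Todd normalization, which in the $\Q$-linear theory is a formal manipulation of power series, does not introduce denominators worse than a power of $(l+1)!$ when restricted to classes of codimension $\leq 2d$, and that the cancellation of the $l!^2(T_l)^2$ factors supplied by Theorem~\ref{GRRfinite} is compatible across the three-fold product $X\times Y\times Z$ (whose dimension $\leq 3d$ and embedding dimension require that the bound $l\geq 2d+e$ in the corollary really does suffice at each projection used --- this is why the hypothesis is $l\geq 2d+e$ rather than $l\geq d+e$). Making this bookkeeping precise, and checking that the resulting assignment is well defined on the idempotent completion, is the technical heart of the argument; everything else is either contained in Theorem~\ref{GRRfinite} or is a coefficient-tracking exercise on top of \cite{Tab14}.
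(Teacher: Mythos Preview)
Your plan is broadly correct and parallels the paper's proof: define $\Phi$ on morphisms by a Chern-character-with-Todd-twist formula, and prove functoriality by applying integral GRR to the projection $p_{13}\colon X\times Y\times Z\to X\times Z$. The main difference is your choice of normalization. You propose the symmetric Mukai vector $\ch(\alpha)\cdot\sqrt{Td(T_{X\times Y})}$ as in \cite{Tab14}, and then worry (rightly) about the denominators in $\sqrt{Td}$. The paper avoids this issue entirely by using the \emph{asymmetric} normalization
\[
\Phi_R(\alpha)=\ch(\alpha)\cdot p_2^*\,Td(T_Y)\in\CH^*(X\times Y)_R,
\]
pulling back only the Todd class of the target. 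The Todd factors still telescope in the composition check, but now the only denominators come from $\ch$ and $Td$ themselves, already controlled by $l!$ and $T_l$ (both dividing $(l+1)!$). This eliminates exactly the obstacle you single out as the technical heart.

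Two smaller points. First, your formula carries an extraneous $\pi_{Y*}$: the correspondence is simply the class in $\CH^*(X\times Y)$, with no pushforward. Second, your bound analysis mentioning dimension $\leq 3d$ is slightly off. The paper does not apply Theorem~\ref{GRRfinite} as a black box to $p_{13}$; it factors $p_{13}$ as the closed immersion $X\times Y\times Z\hookrightarrow X\times Z\times\PP^e$ (via the given $Y\hookrightarrow\PP^e$) followed by the projection to $X\times Z$, and invokes Propositions~\ref{GRRembedding} and~\ref{GRRprojection} separately. Both steps require $l\geq\dim(X\times Z)+e\leq 2d+e$, which is precisely the stated hypothesis; the identity check (GRR for $\Delta\colon X\hookrightarrow X\times X$) needs only $l\geq 2d$.
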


This corollary induces a generalization of Orlov's work on motives and derived categories \cite{Orlov04}.

\begin{cor}
For smooth projective varieties $X,Y$ in $\Sm\Proj^{\leq d}_{(e)}(k)$and $l\geq 2d+e$, if there is a fully faithful triangulated $k$-linear functor $D^b(X)\hookrightarrow D^b(Y)$, then there is a split injective morphism
\[
M(X)_{\Z[\frac{1}{ (l+1)!}]}(d_Y)[2d_Y] \hookrightarrow \bigoplus_{i=0}^{d_X+d_Y}M(Y)_{\Z[\frac{1}{(l+1)!}]}(i)[2i]
\]
in $\DM(k,\Z[\frac{1}{(l+1)!}])$.
\end{cor}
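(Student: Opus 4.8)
The plan is to deduce this corollary from the previous one (the integral Kontsevich comparison theorem) together with the semiorthogonal-decomposition formalism for $K$-motives. First I would observe that a fully faithful triangulated $k$-linear functor $D^b(X)\hookrightarrow D^b(Y)$ is, after passing to dg-enhancements, a fully faithful dg-functor $\operatorname{perf}_{dg}(X)\hookrightarrow\operatorname{perf}_{dg}(Y)$; by the theory of admissible subcategories and Orlov's gluing, this exhibits $\operatorname{perf}_{dg}(X)$ as a semiorthogonal summand of $\operatorname{perf}_{dg}(Y)$. Applying the universal additive invariant $U$ to $\KMM(k)$, a semiorthogonal decomposition becomes a direct sum, so $U(\operatorname{perf}_{dg}(X))$ is a direct summand of $U(\operatorname{perf}_{dg}(Y))$; pulling this back along the functor $\Sm\Proj^{\leq d}_{(e)}(k)\to\KM^{\leq d}_{(e)}(k)$ and using that $\KM(k)$ is idempotent-complete with $\hom(X,Y)=K_0(X\times Y)$, I get a split injection $X\hookrightarrow Y$ in $\KM(k)_{\Z[\frac1{(l+1)!}]}$ — more precisely, a pair of morphisms $\iota\colon X\to Y$, $\pi\colon Y\to X$ with $\pi\iota=\operatorname{id}_X$, coming from a Fourier–Mukai-type kernel in $K_0(X\times Y)$.

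Next I would push this split injection through the functor $\Phi_{\Z[\frac1{(l+1)!}]}$ of the previous corollary. Since $\Phi$ is an additive functor into $\Chow(k)_{\Z[\frac1{(l+1)!}]}/-\otimes T$, it sends the split injection $X\hookrightarrow Y$ to a split injection $\Phi(X)\hookrightarrow\Phi(Y)$ in the orbit category. The content of integral Grothendieck–Riemann–Roch (Theorem~\ref{GRRfinite}), as packaged in the construction of $\Phi$, is precisely that $\Phi(X)$ is the class of the Chow motive $M(X)$ and that the orbit-category Hom-groups compute, after inverting $(l+1)!$, as $\bigoplus_{i}\Hom_{\DM}(M(X),M(Y)(i)[2i])$. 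Unwinding the definition of the orbit category $\Chow(k)/-\otimes T$, a morphism $\Phi(X)\to\Phi(Y)$ is a finite collection of correspondences $M(X)\to M(Y)(i)[2i]$, and a split injection in the orbit category unwinds to a split injection $M(X)\hookrightarrow\bigoplus_i M(Y)(i)[2i]$ in $\DM(k,\Z[\frac1{(l+1)!}])$, with the range of $i$ bounded by the dimensions involved, i.e. $0\le i\le d_X+d_Y$.

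Finally I would normalize the Tate twist: the identification $\Phi(X)=M(X)$ is up to a twist by the Tate motive $T$ (the orbit is taken with respect to $-\otimes T$), and to match the statement one twists by $T^{\otimes d_Y}=\Z(d_Y)[2d_Y]$, which turns $M(X)\hookrightarrow\bigoplus_{i=0}^{d_X+d_Y}M(Y)(i)[2i]$ into the asserted $M(X)(d_Y)[2d_Y]\hookrightarrow\bigoplus_{i=0}^{d_X+d_Y}M(Y)(i)[2i]$; here one uses that $M(Y)$ is self-dual up to the twist $\Z(d_Y)[2d_Y]$ since $Y$ is smooth projective. I expect the main obstacle to be bookkeeping of the Tate twists and the precise bound on the index $i$: one must check that the kernel class in $K_0(X\times Y)$ has components in codimensions between $0$ and $d_X+d_Y$, and that after applying the Chern-character/Todd-class machinery of Theorem~\ref{GRRfinite} the resulting correspondences land in exactly the graded pieces $M(Y)(i)[2i]$ for $i$ in that range, with all denominators controlled by $(l+1)!$ when $l\ge 2d+e$. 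Verifying that $\Phi$ is genuinely additive and compatible with idempotents on the relevant subcategory — so that it transports the splitting faithfully — is the other point requiring care, but it is already essentially contained in the statement and proof of the preceding corollary.
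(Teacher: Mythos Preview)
Your overall strategy matches the paper's: obtain a split injection $X\hookrightarrow Y$ in $\KM(k)$, push it through $\Phi_{R_{l+1}}$, unwind the orbit category to a split injection in $\DM$, then bound the Tate twists. Two points deserve comment.

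First, for the split injection in $\KM(k)$ the paper does not pass through $\KMM$ and semiorthogonal decompositions. It argues directly (Lemma~\ref{KSK}): the fully faithful functor $F$ has a right adjoint $G$ by Bondal--Van den Bergh, both are represented by Fourier--Mukai kernels $\sF\in D^b(X\times Y)$, $\sG\in D^b(Y\times X)$ by Orlov, and $G\circ F\simeq\id$ gives $[\sG]\circ[\sF]=[\Delta_*\sO_X]$ in $K_0(X\times X)$. Your route via $U$ and admissible subcategories would also work, but it is more circuitous and requires checking that the splitting in $\KMM$ really comes from classes in $K_0(X\times Y)$; the paper's argument avoids this.

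Second, your handling of the twist and the index range is muddled. The paper's bookkeeping is this: from the orbit-category splitting one gets $M(X)\hookrightarrow\bigoplus_{i\in\Z}M(Y)(i)[2i]$; since $\Hom_{\DM}(M(X),M(Y)(i)[2i])\simeq\CH^{i+d_Y}(X\times Y)_{R_{l+1}}$ vanishes unless $0\le i+d_Y\le d_X+d_Y$, the range is $-d_Y\le i\le d_X$, not $0\le i\le d_X+d_Y$ as you wrote. Only after tensoring both sides by $\Z(d_Y)[2d_Y]$ does the range become $0,\dots,d_X+d_Y$, and the cancellation theorem then places the map in $\DM^\eff$. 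No appeal to self-duality of $M(Y)$ is needed or used; your invocation of it to keep the right-hand side fixed while twisting only the left is not correct as stated.
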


By using the decomposition of a non-commutative motive of smooth proper variety which has a full exceptional collection (see \cite[Lemma~5.1]{MarcolliTabuada}), we obtain the following.

\begin{cor}
For a smooth projective variety $X$ in $\Sm\Proj^{\leq d}_{(e)}(k)$ and $l\geq 2d+e$, if the twisted derived category of $(X)$ has a full exceptional collection $<E_1,E_2,..,E_m>$, then there  exist  integers $r_1,r_2,...,r_m\in \{0,...,\dim X\}$ giving rise to a canonical isomorphism in $\DM^\eff(k)$:
\[
M(X)_{\Z[\frac{1}{ (l+1)!}]}\simeq \bigoplus_{i=1}^m{\Z[\frac{1}{ (l+1)!}]}(r_i)[2r_i].
\]
\end{cor}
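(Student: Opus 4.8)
The plan is to push the non-commutative decomposition attached to the exceptional collection through the comparison functor of the integral Kontsevich comparison theorem above, and then to promote the resulting isomorphism in the orbit category $\Chow(k)_{\Lambda}/(-\otimes T)$ to an honest isomorphism of Chow motives; here $\Lambda:=\Z[1/(l+1)!]$. This is modelled on the rational argument of Marcolli--Tabuada, the genuinely new inputs being the integral comparison functor $\Phi_{\Lambda}$ and the fact that $\Lambda$ is a principal ideal domain. Throughout, write $M^K(Z)\in\KM(k)$ for a smooth projective variety $Z$ regarded as a $K$-motive, so $\un:=M^K(\Spec k)$ is the unit; since $\Spec k\in\Sm\Proj^{\leq d}_{(e)}(k)$, the objects $M^K(X)$, $\un$ and every $\un^{\oplus m}$ lie in $\KM^{\leq d}_{(e)}(k)$.

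A full exceptional collection $\langle E_1,\dots,E_m\rangle$ of the (twisted) derived category of $X$ gives a semiorthogonal decomposition of the associated smooth proper dg category into $m$ copies of $\mathrm{perf}_{dg}(k)$, and as the universal additive invariant $U$ turns semiorthogonal decompositions into direct sums, \cite[Lemma~5.1]{MarcolliTabuada} yields $U(\mathrm{perf}_{dg}(X))\cong U(k)^{\oplus m}$ in $\KMM(k)$. By the commutativity of the diagram in the integral Kontsevich comparison theorem above, $\theta(M^K(X))=U(\mathrm{perf}_{dg}(X))$ and $\theta(\un)=U(k)$; since $\theta$ is $\Lambda$-linear and fully faithful, this isomorphism descends to $M^K(X)\cong\un^{\oplus m}$ in $\KM^{\leq d}_{(e)}(k)_{\Lambda}$. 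Applying the $\Lambda$-linear functor $\Phi_{\Lambda}\colon\KM^{\leq d}_{(e)}(k)_{\Lambda}\to\Chow(k)_{\Lambda}/(-\otimes T)$ (which exists because $l\geq 2d+e$), and using that by construction $\Phi_{\Lambda}$ sends the $K$-motive of a variety in $\Sm\Proj^{\leq d}_{(e)}(k)$ to the class of its Chow motive — in particular $\Phi_{\Lambda}(\un)$ is the class of the unit $\Lambda(0)[0]$ — we obtain an isomorphism $[M(X)]\cong[\Lambda(0)[0]]^{\oplus m}$ in $\Chow(k)_{\Lambda}/(-\otimes T)$.

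It remains to lift this. Unwinding the definition of the orbit category, the isomorphism $[M(X)]\cong[\Lambda(0)[0]]^{\oplus m}$ together with its inverse assembles, in $\Chow(k)_{\Lambda}$, into morphisms $M(X)_{\Lambda}\to W$ and $W\to M(X)_{\Lambda}$ whose composite on $M(X)_{\Lambda}$ is the identity, where $W$ is the finite direct sum of Tate twists $\Lambda(j)[2j]$ indexed by the finitely many nonzero twist-components of the two maps; thus $M(X)_{\Lambda}$ is a direct summand of a finite sum of Tate motives. Now $\Chow(k)_{\Lambda}$ is idempotent complete, there are no nonzero morphisms between Tate motives of distinct weights, and $\Lambda$ is a PID, so finitely generated projective $\Lambda$-modules are free; hence every direct summand of a finite sum of Tate motives is again a finite sum of Tate motives, and therefore $M(X)_{\Lambda}\cong\bigoplus_{i=1}^{N}\Lambda(r_i)[2r_i]$ in $\Chow(k)_{\Lambda}$ for some integers $r_i$. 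Projecting back to the orbit category and comparing with $[M(X)]\cong[\Lambda(0)[0]]^{\oplus m}$, the fact that $\mathrm{End}_{\Chow(k)_{\Lambda}/(-\otimes T)}(\Lambda(0)[0])=\Lambda$ forces $N=m$; and since each $\Lambda(r_i)[2r_i]$ is a retract of $M(X)_{\Lambda}$, the group $\Hom_{\Chow(k)_{\Lambda}}(M(X),\Lambda(r_i)[2r_i])=\CH^{r_i}(X)_{\Lambda}$ is nonzero, which forces $r_i\in\{0,\dots,\dim X\}$. Transporting along the fully faithful embedding $\Chow(k)_{\Lambda}\hookrightarrow\DM^{\eff}(k)_{\Lambda}$ ($k$ being perfect) gives the asserted isomorphism $M(X)_{\Z[1/(l+1)!]}\simeq\bigoplus_{i=1}^{m}\Z[1/(l+1)!](r_i)[2r_i]$; it is canonical since every step is induced by the exceptional collection, and the multiset $\{r_i\}$ is intrinsic to $M(X)_{\Lambda}$ because $\#\{i:r_i=n\}=\mathrm{rank}_{\Lambda}\Hom(\Lambda(n)[2n],M(X)_{\Lambda})$.

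The essential difficulty is the last step: extracting an honest motivic decomposition from a bare orbit-category isomorphism. Over a general coefficient ring the idempotent carving $M(X)_{\Lambda}$ out of a sum of Tate motives need not split into rank-one pieces, which is exactly why $\Lambda=\Z[1/(l+1)!]$ being a PID is used; and one also needs the elementary vanishing $\CH^{r}(X)=0$ for $r\notin\{0,\dots,\dim X\}$ to confine the twists to the stated range. A minor verification is that $\Phi_{\Lambda}$ really does carry $M^K(X)$ to $[M(X)]$ and $\un$ to the class of the unit, but this is immediate from its construction in the integral Kontsevich comparison theorem. The twisted case requires no modification, since the only input used is the semiorthogonal decomposition of the associated smooth proper dg category into copies of $\mathrm{perf}_{dg}(k)$, hence \cite[Lemma~5.1]{MarcolliTabuada}.
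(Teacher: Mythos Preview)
Your proof is correct and follows essentially the same route as the paper's: pass the exceptional-collection decomposition through $\theta$ and $\Phi_{\Lambda}$ to the orbit category, lift to a split embedding of $M(X)_{\Lambda}$ into a finite sum of Tate motives, then use that $\Lambda$ is a PID to identify the summand and count pieces. The only cosmetic differences are that the paper packages the lifting step as an explicit lemma in $\DM^{\eff}$ (its Lemma~\ref{OOOO}) and restricts the Tate weights to $\{0,\dots,\dim X\}$ \emph{before} rather than after applying the PID argument; your idempotent-completion/PID formulation is a cleaner way to say the same thing.
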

We summarise the strategy of the proof of Theorem~\ref{GRRfinite}. In section~\ref{21} and section~\ref{22}, we recall some of the properties of the Grothendieck group and the definition of Chern character and Todd class. In section~\ref{23}, we define integral Chern character and integral Todd class, and in section~\ref{24} we prove some properties of these such as the additive formula \eqref{nakanosine}, and the tensor product formula~\eqref{Dentsuusine}. In section~\ref{26}, we prove the integral Grothendieck Riemann-Roch for sections of projective bundles. In section~\ref{GRRproj}, we prove integral Grothendieck Riemann-Roch for projections from trivial projective bundles. In section~\ref{28}, we recall deformation to the normal cone, and use it to prove the integral Grothendieck Riemann-Roch for closed embeddings. Finally, in \ref{29}, we combine the above special cases to prove the integral Grothendieck Riemann-Roch.

\subsection{Notation and conventions}
We consider the following categories, and ring associated to  a field $k$, natural numbers $d,e,r$, and a commutative ring $R$. Where ever possible, we have used notation already existing in the literature.
\begin{itemize}
    \item[] $\Sm\Proj^{\leq d}_{(e)}(k)$: the full subcategory of $\Sm\Proj(S)$ whose objects can be embedded in $\PP^e_k$ and dimension of objects is less than or equal to $d$
   { \item[]{\[R_{r}= \Z[\frac{1}{r!}]\]}}
   { \item[]{$\KM^{\leq d}_{(e)}(k)$: the smallest full subcategory of $\KM(k)$ (see section~\ref{section:IntKonThm}) which contains the image of the functor $\Sm\Proj^{\leq d}_{(e)}(k) \to \KM(k)$ and is closed under finite coproducts}}
   {\item[]{$\Chow/-\otimes T$: is the orbit category (see Section~\ref{section:IntKonThm}).}} 
   { \item[]{$\Chow(k) \overset{\pi}{\to} \Chow(k)/-\otimes T$: the natural functor from $\Chow(k)$ to $\Chow(k)/-\otimes T$.}}
\end{itemize}

\section{Integral Grothendieck Riemann-Roch theorem} We begin by recalling some of the properties of the Grothendieck group of vector bundles and coherent sheaves. This material is contained in Fulton's book \cite{Fulton}.

\subsection{Grothendieck group of vector bundles and coherent sheaves}\label{21}
 For any scheme $X$, we write $K^0(X)$ for the Grothendieck group of locally free sheaves, $K_0(X)$ denotes the Grothendieck group of coherent sheaves. There is a canonical morphism
\[
K^0(X) \to K_0(X)
\]
which takes a vector bundle to its sheaf of sections. When $X$ is smooth variety over a field $k$ then this map is an isomorphism. For any map $f:X \to Y$ of schemes, the pullback of bundles induces a homomorphism
\[
f^*:K^0(Y) \to K^0(X).
\]
For a proper morphism $f:X \to Y$, there is a homomorphism
\[
f_*:K_0(X) \to K_0(Y)
\]
which take $[\sF]$ to $\Sigma_{i\geq 0}(-1)^i[R^if_*\sF]$. The tensor product of bundles makes $K^0(X)$ a ring and makes $K_0(X)$ a $K^0(X)$-module:
\[
K^0(X) \otimes K_0(X) \to K_0(X)
\]
where $[E]\cdot [\sF]$ goes to $[E\otimes_{\sO_X}\sF]$. As with Chow groups, the projection formula holds:
\[
f_*(f^*y \cdot x) =y \cdot f_*x \in K_0(Y)
\]
for a proper map $f:X\to Y$ and $x\in K_0(X)$, $y\in K^0(Y)$.

\subsection{Chern class}\label{22}

Now recall some fundamental facts about Chern classes of vector bundles, and the definition of the Chern character and Todd class. 

Let $E$ be a vector bundle on a scheme $Y$, suppose $f:X \to Y$ is a proper morphism. Then the projection formula 
\begin{equation}\label{projection}
f_*(c_i(f^*E)\cap x)=c_i(E)\cap f_*(x)
\end{equation}
holds in the Chow group $\CH_*(Y)$ for all $x\in \CH^*(X)$, and all $i$ (see \cite[Theorem~3.2(c)]{Fulton}). Let $E$ be a vector bundle on a scheme $Y$, and let $f:X \to Y$ be a flat morphism. Then the pullback formula
\begin{equation}\label{pulback}
f^*(c_i(E)\cap y)=c_i(f^*E)\cap f^*(y)
\end{equation}
holds in the Chow group $\CH_*(X)$ for all $y\in \CH_*(Y)$, and all $i$ (see \cite[Theorem~3.2(d)]{Fulton}). For any exact sequence
\[
0 \to E' \to E \to E'' \to 0, 
\]
of vector bundles, the Whitney sum 
\begin{equation}\label{sumsumsum}
c_l(E) = \Sigma_{i+j=l} c_i(E')c_j(E'')
\end{equation}
holds in $\CH_*(X)$. More generally, for any filtration 
\[
0 = E_0 \subset E_1 \subset \dots E_{r-1} \subset E_{r} =E
\]
of vector bundles, such that for $i = 1, \dots, r$ the $Q_i = E_{i} / E_{i-1}$ are also vector bundles, the Whitney sum 
\begin{equation}\label{sumsumsumfilt}
c_l(E) = \sum_{i_1+i_2+...+i_r=l}\prod_{m=1}^{r}c_{i_{m}}(Q_m)
\end{equation}
holds in $\CH_*(X)$.

The Chern character and Todd class can be defined using Chern roots as follows. Let $X$ be a smooth variety over $k$ of dimension $d$, and $E$ a vector bundle of rank $r$ on $X$. If $P \to X$ is a flat morphism such that the pull-back $\CH^*(X) \to \CH^*(P)$ is injective, and such that the Chern polynomial $c_t(E) = 1 + \sum_{i = 1}^r c_i(E) t^r$ factors as 
\[ c_t(E) = \prod_{i=1}^r(1+a_it) \in \CH^*(P)[[t]] \]
for some $a_1, \dots, a_r \in  \CH^1(P)$ then the $a_1, \dots, a_r$ are called \emph{Chern roots} of $E$. By the splitting principal there always exists such a $P$ (for example, take $P$ to be the flag bundle of $E$), and in fact, for any finite collection $E_1, \dots, E_m$ of vectors bundles, there exists a $P$ which works for all $E_i$ at once. Now, recall that any symmetric polynomial $T(a_1, \dots, a_r) \in \Z[[a_1, \dots, a_r]]^{Sym(r)}$ can be written in a unique way $T = f(\sigma_1, \dots, \sigma_r)$ as a polynomial $f$ in elementary symmetric polynomials $\sigma_i(a_1, \dots, a_r) = \sum_{1 \leq j_1 < \dots < j_i \leq r} a_{j_1} \dots a_{j_i}$. It follows from this that given any such symmetric polynomial $T$, any such $P \to X$ and any Chern roots $a_1, \dots, a_r \in CH^*(P)$ of $E$, the class $T(a_1, \dots, a_r)$ lies in $\CH^*(X) \subseteq \CH^*(P)$.  %
%
%
If a symmetric polynomial $T\in\Z[[a_1, \dots, a_r]]^{Sym(r)}$ is homogeneous, then 
$T(a_1, \dots, a_n)$ is contained in $\CH^{\dim X -\deg T}(X)$.

The Chern character is defined as

\begin{equation} \label{ChQ} 
ch(E)=\Sigma_{n=1}^{r} \exp(a_n)=\Sigma_{n=1}^{r}\Sigma_{i=0}^{\infty}\frac{1}{i!}a_n^i\in \CH^*(X)_{\mathbb{Q}},
\end{equation}

and the Todd class is defined as

\begin{equation} \label{TdQ} 
Td(E)=\prod_{n=1}^r \frac{a_n}{1-\exp(-a_n)}=\prod_{n=1}^r (\Sigma_{i=0}^\infty \frac{(-1)^iB_i}{i!}a_n^i) \in\CH^*(X)_{\mathbb{Q}},
\end{equation}
for any choice of Chern roots $a_1, \dots, a_n$.

\subsection{Study of the Chern character and the Todd class with integral coefficient}\label{23}
In this section, we study the Chern character and the Todd class with integral coefficient. Let $X$ be a smooth variety over $k$ of dimension $d$, and $E$ be a vector bundle of rank $r$ on $X$. Let $a_1,a_2,...,a_r$ be Chern root of $E$. For a commutative ring $R$, we write $R[a_1,..,a_r]^{Sym(r)}_{d}$ for the residue class ring 
\[
R[a_1,..,a_r]^{Sym(r)}_{d}:=R[[a_1,..,a_r]]^{Sym(r)}/ (f:\text{homogeneous polynomial}~|~\deg(f)>d).
\]
Since the natural map of commutative rings
\begin{eqnarray}
    \Z[[a_1,a_2,...,a_r]]^{Sym(r)} &\to & \CH^*(X) \\
\sigma_i(a_1,...,a_r)    & \mapsto & c_i(E) \nonumber
\end{eqnarray}
factors through $\Z[a_1,a_2,...,a_r]^{Sym(r)}_d$:
\[
\Z[[a_1,a_2,...,a_r]]^{Sym(r)} \to \Z[a_1,a_2,...,a_r]^{Sym(r)}_d \to \CH^*(X).
\]
We note that the following diagram of rings 
\[
\xymatrix{
\Z[[a_1,a_2,...,a_r]]^{Sym(r)} \ar[r] \ar@{^{(}-_>}[d] & \Z[a_1,a_2,...,a_r]^{Sym(r)}_d \ar@{^{(}-_>}[d] \ar[r] & \CH^*(X) \ar[d]\\
\Q[[a_1,a_2,...,a_r]]^{Sym(r)} \ar[r] & \Q[a_1,a_2,...,a_r]^{Sym(r)}_d \ar[r]& \CH^*(X)_\Q
}
\]
is commutative where vertical maps are natural base change maps. We note that $\Z[[a_1,a_2,..,a_r]]^{Sym(r)}$ and $\Z[a_1,a_2,..,a_r]^{Sym(r)}_d$ are torsion free $\Z$-modules. For a number $l \geq d$, by the definition of Chern character, if we multiply the class of the Chern character $\ch(E) = \Sigma_{n=1}^{r} \exp(a_n) \in \Q[a_1,a_2,...,a_r]^{Sym(r)}_d$ by $l!$, we can regard $l!\cdot \ch(E)$ as an element of $\Z[a_1,a_2,...,a_r]^{Sym(r)}_d$:
\[
l!\cdot \ch(E) \in \Z[a_1,a_2,...,a_r]^{Sym(r)}_d.
\]
Set 
\[
T_m:=\displaystyle\prod_{p:\text{prime number}} p^{[\frac{m}{p-1}]}
\]
for a number $m \in \N$. By \cite[Lemma~1.7.3]{Hi}, if we multiply the class of the Todd class $Td(E) \in \Q[a_1,a_2,...,a_r]^{Sym(r)}_d$ by $T_l$, we can regard $T_l\cdot Td(E)$ as an element of $\Z[a_1,a_2,...,a_r]^{Sym(r)}_d$:
\[
T_l \cdot Td(E) \in \Z[a_1,a_2,...,a_r]^{Sym(r)}_d,
\]
and also if we multiply the class of the inverse of the Todd class $Td(E)^{-1} \in \Q[a_1,a_2,...,a_r]^{Sym(r)}_d$ by $T_l$, we can regard $T_l\cdot Td(E)^{-1}$ as an element of $\Z[a_1,a_2,...,a_r]^{Sym(r)}_d$:
\[
T_l \cdot Td(E)^{-1} \in \Z[a_1,a_2,...,a_r]^{Sym(r)}_d.
\]
Since the product $(T_l\cdot Td(E) ) \cdot (T_l\cdot Td(E)^{-1}) $ is equal to $T_l^2\cdot 1 \in \Z[a_1,a_2,..,a_r]^{Sym(r)}_d$, we have an equality 
\begin{equation}
    (T_l\cdot Td(E) ) \cdot (T_l\cdot Td(E)^{-1}) =T_l^2\cdot [X]
\end{equation}
in $\CH^*(X)$.

For a flat morphism $f:X \to Y$, a vector bundle $E$ on $Y$ and a number $l \geq \max\{\dim X,\dim Y\}$, by the equation \eqref{pulback} we have
\begin{eqnarray}
l!\cdot \ch(f^*E)&=&f^*\bigl( l!\cdot\ch(E) \bigr)\label{pullbackch} \\ 
T_l\cdot Td(f^*E) &=& f^*\bigl( T_l\cdot Td(E)\bigr)\label{pullbackTd}
\end{eqnarray}
in $\CH^*(X)$, and for a proper morphism $f:X \to Y$ by the projection formula \eqref{projection} we have
\begin{eqnarray}\label{projchch}
f_*\bigl(l!\cdot \ch(f^*E)\cap \alpha\bigr)&=&l!\cdot \ch(E)\cap f_*\alpha\label{projectionch}
\end{eqnarray}
\begin{eqnarray}\label{projTdTd}
f_*\bigl(T_l\cdot Td(f^*E)\cap \alpha\bigr)&=&T_l \cdot Td(E)\cap f_*\alpha
\end{eqnarray}
for any $\alpha \in \CH^*(X)$.

Take $l\geq d$. Let $E_1$ and $E_2$ be vector bundles of rank $r_1$ and $r_2$ on smooth variety $X$, we assume there is an exact sequence of vector bundles on $X$:
\[
0 \to E_1 \to E \to E_2 \to 0.
\]
Let us now prove $\ch(E)=\ch(E_1)+\ch(E_2)$. Set $a_1,a_2,...,a_{r_1}$ to be Chern roots of $E_1$, and $b_1,b_2,...,b_{r_2}$ to be Chern roots of $E_2$. By the Whitney sum \eqref{sumsumsum}, we have
\[
c_t(E) =c_t(E_1)c_t(E_2).
\]
Thus $a_1,a_2,...,a_{r_1},b_1,b_2,...,b_{r_2}$ are Chern roots of $E$.
Now consider the homogeneous polynomial
\[
T_n:= \Sigma_{i=1}^{r_1}a_i^n +  \Sigma_{i=1}^{r_2}b_i^n.
\]
The Chern character $l!\cdot ch(E)$ is equal to $\Sigma_{i=0}^d\frac{l!}{n!}T_n$ in $\CH^*(X)$. By the definition, we have the additive formula in $\CH^*(X)$
\begin{eqnarray}\label{nakanosine}
l!\cdot \ch(E)=\Sigma_{i=0}^d\frac{l!}{n!}T_n&=&\Sigma_{i=0}^d\frac{l!}{n!}(\Sigma_{i=1}^{r_1}a_i^n) +
\Sigma_{i=0}^d\frac{l!}{n!}(\Sigma_{i=1}^{r_2}b_i^n)\\
&=& l! \cdot\ch(E_1) +l! \cdot\ch(E_2).\nonumber
\end{eqnarray}
In the same way, we have the additive formula
\begin{equation}\label{Dentsuusine}
T_l^2\cdot Td(E)=T_l\cdot \prod_{i=1}^{r_1}(\Sigma_{n=0}^l \frac{(-1)^nB_n}{n!}a_i^n) \cdot T_l\cdot \prod_{j=1}^{r_2}(\Sigma_{n=0}^{l} \frac{(-1)^nB_n}{n!}b_j^n)=T_l\cdot Td(E_1) \cdot T_l\cdot Td(E_2)
\end{equation}
in $\CH^*(X)$.
Thanks to the additive formula \eqref{nakanosine}, the map 
\begin{eqnarray*}
\Z[{\textbf{Vect}}(X)] & \to & \CH^*(X)\\
\Sigma m_i E_i & \mapsto &  \Sigma m_i \cdot  l!\cdot ch(E_i)
\end{eqnarray*}
induces a homomorphism of additive groups
\[
\ch:K^0(X) \to \CH^*(X).
\]

\subsection{Chern character of tensor products}\label{24} We set 
\[
\exp^{(l)}(\alpha)=\Sigma_{n=0}^{l}\frac{1}{n!}\alpha^n. 
\] 
It is easy to see that there are polynomials $f_{j}\in R_{l}[a,b]$ satisfying the product rule 
\begin{equation}\label{expexp}
\exp^{(l)}(a)\exp^{(l)}(b)=\exp^{(l)}(a+b) + \Sigma_{j=0}^{l+1} a^jb^{l+1-j}f_{j}.
\end{equation}
Notice that if we consider $R[a,b]$ as a graded ring with $a$ and $b$ of degree one, then the error term $\Sigma_{j=0}^{l+1} a^jb^{l+1-j}f_{j}$ has degree $\geq l+1$.

For a vector bundle $F$ of rank $e$, and let $c_1,..,c_e$ be  Chern roots of $F$, then 
 \[ l!\cdot \ch(F)=l!\cdot \Sigma_{n=1}^e\exp^{(l)}(c_n) \]
 in $\Z[c_1,c_2,..,c_e]^{Sym(e)}_d$.
 
\begin{lemma}
Let $X$ be a smooth variety of dimension $d$ over $k$. For a vector bundle $E$ of rank $r$, and a vector bundle $E'$ of rank $r'$. Let $a_1,...,a_r$ be  Chern roots of $E$ and $b_1,...,b_{r'}$ be Chern roots of $E'$. Then 
\[
a_i+b_j, \text{ ~~}\:\:\:~~ 1\leq i \leq r \:\:\: \text{ ~~ } 1\leq j \leq r' .
\] 
are Chern roots of $E \otimes E'$.
\end{lemma}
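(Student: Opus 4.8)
The claim is a statement about Chern roots, and the cleanest route is to reduce to the classical splitting principle. First I would choose a single flat morphism $P \to X$ with $\CH^*(X) \to \CH^*(P)$ injective that simultaneously splits both $E$ and $E'$; such a $P$ exists by the splitting principle (for instance the product over $X$ of the flag bundles of $E$ and $E'$, or the flag bundle of $E \oplus E'$), as recalled in Section~\ref{22}. Over $P$ we then have filtrations of $f^*E$ and $f^*E'$ whose successive quotients are line bundles $L_1, \dots, L_r$ and $L'_1, \dots, L'_{r'}$ with $c_1(L_i) = a_i$ and $c_1(L'_j) = b_j$.

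The main computation is then to exhibit, on $P$, a filtration of $f^*(E \otimes E') = f^*E \otimes f^*E'$ whose successive quotients are exactly the line bundles $L_i \otimes L'_j$. This is standard: filter $f^*E \otimes f^*E'$ first by $E_{\bullet} \otimes f^*E'$ using the filtration of $f^*E$, so that the associated graded pieces are $L_i \otimes f^*E'$; then refine each such piece using the filtration of $f^*E'$ tensored by the line bundle $L_i$, which is exact since $L_i$ is locally free, giving graded pieces $L_i \otimes L'_j$. Since for line bundles $c_1(L_i \otimes L'_j) = c_1(L_i) + c_1(L'_j) = a_i + b_j$, the Whitney sum formula for filtrations \eqref{sumsumsumfilt} yields
\[
c_t(f^*(E\otimes E')) = \prod_{i=1}^{r}\prod_{j=1}^{r'}\bigl(1 + (a_i+b_j)t\bigr) \in \CH^*(P)[[t]].
\]
By the definition of Chern roots (and the fact that $\CH^*(X) \to \CH^*(P)$ is injective so that $E \otimes E'$ is being tested on a legitimate $P$), this exactly says that the $a_i + b_j$ are Chern roots of $E \otimes E'$.

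The only mild subtlety — and the step I would be most careful about — is the bookkeeping of the iterated filtration and the verification that tensoring an exact sequence of vector bundles by a line bundle keeps it exact with locally free quotients, so that \eqref{sumsumsumfilt} genuinely applies at each stage; this is routine but is where an error would hide. I would also note explicitly that the choice of $P$ is irrelevant for the conclusion: any symmetric polynomial in the $a_i + b_j$ already lies in $\CH^*(X)$ by the discussion following \eqref{sumsumsumfilt}, so the Chern-root description of $E \otimes E'$ is well-posed.
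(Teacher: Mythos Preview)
Your proposal is correct and follows essentially the same route as the paper: pass to a common splitting $P \to X$, use the induced filtrations to get a (bi-)filtration of $f^*(E\otimes E')$ with graded pieces $L_i\otimes L'_j$, invoke $c_1(L_i\otimes L'_j)=a_i+b_j$, and conclude via the Whitney formula \eqref{sumsumsumfilt}. The paper just phrases the refinement as a ``bi-filtration'' and singles out the rank-one case first, but the argument is the same.
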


\begin{proof}
In the case $r=1$ and $r'=1$, since $c_1(E\otimes E')=c_1(E) + c_1(E')$ in $\CH^*(X)$, we have
\[
c_t(E\otimes E')=1+c_1(E)t+c_1(E')t=1+a_1t+b_1t = 1 + (a_1 + b_1)t.
\]

By the splitting principal there is a flat projective map $f:P \to X$ and vector bundles $E_i$ and $E'_{i'}$ on $P$ giving filtrations of vector bundles
\[
0 \subset E_1 \subset \dots \subset E_{r-1}\subset E_{r} = f^*E
\] 
and 
\[
0 \subset E'_1 \subset \dots \subset E'_{r'-1}\subset E'_{r'} = f^*E'
\]
such that $L_{i}=E_i/E_{i-1}$ and $L'_{i}=E'_{i}/E'_{i-1}$ are line bundles for any $i$. Thanks to these filtrations, there is a bi-filtration of $f^*(E\otimes E')$ such that the each of $(i,j)$-graded piece is isomorphic to the line bundle $L_i\otimes L'_{j}$ on $P$. By the definition of Chern roots, we know $c_1(L_i)=a_i$ and $c_1(L'_{j})=b_{j}$, thus we have $c_t(L_i\otimes L'_{j})=1+(a_i+b_j)t$. By the Whitney sum~\ref{sumsumsumfilt} we have
\[
c_t(E\otimes E')= \prod_{0\leq i\leq r}\prod_{0\leq j \leq r'} c_t(L_i \otimes L'_j)=\prod_{0\leq i\leq r}\prod_{0\leq j \leq r'} (1+(a_i+b_j)t) \in \CH^*(P)[[t]].
\]
The proof is then achieved.
\end{proof}
By the equation \eqref{expexp}, if $l \geq d$ then We have that
\begin{equation}\label{otimes}
l!^2 \cdot \ch(E\otimes E')
=l!^2 \cdot \sum_{i=1}^r \sum_{j=1}^{r'}\exp^{(l)}(a_i+b_j)
=\left (l! \cdot\sum_{i=1}^r  \exp^{(l)}(a_i) \right ) \left (l! \cdot\sum_{j=1}^{r'}\exp^{(l)}(b_j) \right )
=l! \cdot \ch(E)\cdot l! \cdot\ch(E')
\end{equation}
in $\CH^*(X)$.

\subsection{Chern character of exterior powers}
Let $a_1,...,a_r$ be Chern roots of a vector bundle $E$ on a smooth variety $X$ of dimension $d$ Then the Chern polynomial of the exterior powers $\bigwedge^p E$ satisfies the following:
\[
c_t(\bigwedge^p E) = \displaystyle\prod_{i_1<i_2<\cdots < i_p}\bigl(1+ (a_{i_1}+a_{i_2}+\cdots + a_{i_p})t\bigr).
\]

\begin{example}(\cite[Example~3.2.5]{Fulton})\label{exexex}
Let $a_1,...,a_r$ be Chern roots of a vector bundle $E$ on smooth variety $X$ of dimension $d$. We will show the equation
\begin{equation}\label{exFulton}
T_l \cdot \Sigma_{p=0}^r(-1)^p \ch(\bigwedge^p E^{\vee})= T_l \cdot Td(E)^{-1}c_d(E)
\end{equation}
in $\CH^*(X)$.  Since $l!$ divides $T_l$, both sides of the equation have integral coefficients. Thus it is enough to show that the equation holds in $\Q[a_1,a_2,..,a_r]^{Sym(r)}_d$. Since the Chern roots of $\bigwedge^p E^{\vee}$ are $\{-a_{i_1}-a_{i_2}-\cdots-a_{i_p}| i_1<i_2< \cdots < i_p \}$, for $l\geq d$  we have 
\begin{equation}\label{kaiji}
\Sigma_{p=0}^r(-1)^p \ch(\bigwedge^p E^{\vee})= \cdot \Sigma_{p=0}^r(-1)^p {\sum_{i_1<i_2<\cdots <i_p}}\exp^{(l)}(-a_{i_1}-a_{i_2}-\cdots-a_{i_p})
\end{equation}
in $\Q[a_1,a_2,..,a_r]^{Sym(r)}_d$, where $\exp^{(l)}(\alpha)=\Sigma_{n=0}^{l}\frac{1}{n!}\alpha^n$. By the product rule of $\exp^{(l)}$ (see Equation \eqref{expexp}) we have
\begin{equation}\label{akagi}
\Sigma_{p=0}^r(-1)^p {\sum_{1\leq i_1<i_2<\cdots <i_p\leq r}}\exp^{(l)}(-a_{i_1}-a_{i_2}-\cdots-a_{i_p})= \displaystyle\prod_{i=1}^{r}(1-\exp^{(l)}(-a_i))
\end{equation}
in $\Q[a_1,a_2,..,a_r]^{Sym(r)}_d$.
Recall that in $\Q[[x]]$ we have 
\[ \frac{x}{1-\exp(-x)} = \Sigma_{n=0}^\infty \frac{(-1)^nB_n}{n!}{x}^n. \]
From this we obtain
\begin{align*}
x &= (1-\exp(-x))\left (\Sigma_{n=0}^\infty \frac{(-1)^nB_n}{n!}{x}^n \right ) \\
&= \biggl (1-\exp^{(l)}(-x) + f(x) \biggr )\left ( \Sigma_{n=0}^l \frac{(-1)^nB_n}{n!}{x}^n + g(x) \right) \\
&= \biggl (1-\exp^{(l)}(-x)\biggr )\left ( \Sigma_{n=0}^l \frac{(-1)^nB_n}{n!}{x}^n\right) + h(x) 
\end{align*}
where $f(x)$, $g(x)$ and $h(x)$ are in $(x^{l+1})\Q[[x]]$. This equality induces the following equality of symmetric polynomials 
\[
\displaystyle\prod_{i=1}^r \bigl(1-\exp^{(l)}(-a_i)\bigr)\bigl(\Sigma_{n=0}^l \frac{(-1)^nB_n}{n!}a_i^n\bigr)= a_1a_2\cdots a_r + Q_{d+1} + Q_{d+2} +\cdots Q_{2lr}=c_d(E)+ Q_{d+1} + Q_{d+2} +\cdots Q_{2lr}
\]
where the $Q_{d+i}\in \Q[a_1,a_2,...,a_r]$ are symmetric polynomials of degree $d+i$. By the equation \eqref{kaiji} and the equation \eqref{akagi}, we know \[\prod_{i=1}^r \bigl(1-\exp^{(l)}(-a_i)\bigr)=\Sigma_{p=0}^r(-1)^p \ch(\bigwedge^p E^{\vee}).\]
By the definition, we know $Td(E)=\prod_{i=1}^r\Sigma_{n=0}^l \frac{(-1)^nB_n}{n!}a_i^n$. On the other hand, we know $Q_{d+i}=0$ in $\CH^*(X)_{R_{l+1}}$ thus we have the equation
\[
\Sigma_{p=0}^r(-1)^p \ch(\bigwedge^p E^{\vee})=Td(E)^{-1}c_d(E)
\]
in $\Q[a_1,a_2,...,a_r]^{Sym(r)}_d$
\end{example}

\subsection{Integral Grothendieck Riemann-Roch for sections of projective bundles}\label{26}

Let $X$ be a smooth variety of dimension $d$ over $k$, and let $N$ be a vector bundle of rank $r$ on $X$. Take $l \geq r+d$. Let $f$ be the closed immersion $X \hookrightarrow \PP_X(N\oplus 1)$ which is the composition of the zero section $X \to N$ and canonical open immersion $N \to \PP_X(N\oplus 1)$. Let $p$ be the natural projection $\PP_X(N\oplus 1) \to X$. 
\[
\xymatrix{
X \ar[r]^-{f} & \PP_X(N \oplus 1) \ar[d]^{p} \\
& X}
\]
In this setting we prove the following.
\begin{prop}\label{GRRprojsection}
Suppose $l \geq r+d$. For any vector bundle $E$ on $X$, the equation
\begin{equation}\label{GRRimmersion}
T_l^2\cdot (l!)^2 \cdot Td(T_{\PP_X(N\oplus 1)})\cdot \ch(f_*[E]) = f_*\bigl( T_l^2\cdot (l!)^2 \cdot Td(T_X)\ch(E)\bigr)
\end{equation}
holds in $\CH^*(\PP_X(N\oplus 1))$.
\end{prop}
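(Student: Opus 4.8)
The plan is to reduce the statement to the classical Grothendieck--Riemann--Roch for the closed embedding $f\colon X \hookrightarrow \PP_X(N\oplus 1)$, which holds with $\Q$-coefficients, and then to track denominators carefully so that multiplication by $T_l^2 \cdot (l!)^2$ clears them integrally. Concretely, the key input is a Koszul resolution: the structure sheaf $f_*\sO_X$ admits a resolution on $\PP := \PP_X(N\oplus 1)$ by exterior powers of the dual of the tautological sub/quotient bundle, so that in $K_0(\PP)$ one has $f_*[\sO_X] = \sum_{p=0}^r (-1)^p [\bigwedge^p \sQ^\vee]$ for the appropriate bundle $\sQ$ of rank $r$ whose restriction to $X$ (the zero section) is $N$, together with the fact that the normal bundle of $f$ is $N$. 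Pushing $[E]$ forward, the projection formula in $K$-theory gives $f_*[E] = p^*[E]\cdot \sum_p (-1)^p[\bigwedge^p \sQ^\vee]$.

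First I would record the geometry: on $\PP$ there is the tautological exact sequence, from which $\sQ$ (rank $r$) with $c_\bullet(\sQ)$ expressible via the hyperplane class and $p^*c_\bullet(N)$, and the identification $f^*\sQ \cong N$, $T_\PP \cong p^*T_X \oplus T_{\PP/X}$ with $T_{\PP/X}$ computed from the Euler sequence. Then I would compute $\ch(f_*[E]) = \ch(p^*E)\cdot \sum_p(-1)^p\ch(\bigwedge^p\sQ^\vee)$ using the additive/tensor formulas \eqref{nakanosine}, \eqref{otimes} already established, and apply Example~\ref{exexex} (equation \eqref{exFulton}): $T_l\cdot\sum_p(-1)^p\ch(\bigwedge^p\sQ^\vee) = T_l\cdot Td(\sQ)^{-1}c_r(\sQ)$, all as integral classes once multiplied by the stated factors. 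The self-intersection/excess formula identifies $f^*c_r(\sQ)$ with $c_r(N)$ and, more importantly, gives $f_*(1) = c_r(\sQ)$ in $\CH^*(\PP)$, so that $f_*(\alpha) = c_r(\sQ)\cdot \ol{\alpha}$ for any lift $\ol{\alpha}$ of $\alpha$ along $f^*$ (valid since $f^*$ is split surjective on Chow groups here, $p$ being a projective bundle).

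Next I would assemble both sides. The left side becomes $T_l^2(l!)^2\, Td(T_\PP)\cdot \ch(p^*E)\cdot Td(\sQ)^{-1}c_r(\sQ)$, and using $Td(T_\PP) = Td(p^*T_X)\cdot Td(T_{\PP/X})$ plus the relation between $T_{\PP/X}$, $\sQ$ and the Euler sequence ($Td(T_{\PP/X})$ differs from $Td(\sQ)$ by a factor coming from $\sO(1)$, which is exactly what is needed), the factor $Td(T_{\PP/X})\cdot Td(\sQ)^{-1}c_r(\sQ)$ collapses to something that is visibly $f_*$ of an integral class on $X$, namely $f_*$ of a unit times $Td(N)^{-1}c_r(N)\cdots$; comparing with the classical GRR for $f$ (where $f_*(Td(T_X)\ch(E)) = Td(T_\PP)\ch(f_*E)$ rationally) I then multiply through by $T_l^2(l!)^2$ and invoke torsion-freeness of $\Z[a_1,\dots,a_n]^{Sym}_d$ from Section~\ref{23} to conclude the integral identity in $\CH^*(\PP)$.

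The main obstacle I anticipate is bookkeeping of denominators: classical GRR produces $Td$ and $\ch$ with unbounded denominators, and I must check at each use of the projection formula, the Koszul resolution, and the Euler-sequence manipulation that a single overall factor of $T_l^2(l!)^2$ (not a larger one) suffices—this is where Example~\ref{exexex} and the tensor formula \eqref{otimes} are doing the real work, since they are precisely the statements that $T_l\cdot(\text{alternating sum of }\ch\bigwedge^p)$ and $l!\cdot\ch$ of a single bundle are integral, and I must be careful that the product $Td(T_X)^{-1}$-type term contributes only one further $T_l$. A secondary point requiring care is that everything must be phrased via Chern roots on a common splitting variety $P\to\PP$ so that the symmetric-function identities of Section~\ref{23}--\ref{24} apply verbatim, and then descended to $\CH^*(\PP)$.
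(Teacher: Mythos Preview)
Your outline contains the right key ingredients—the Koszul resolution of $f_*E$ by $\bigwedge^p \sQ^\vee \otimes p^*E$, the identity of Example~\ref{exexex}, and the self-intersection formula $f_*f^*\alpha = c_r(\sQ)\cap\alpha$—and these are exactly what the paper uses. However, the framing ``reduce to rational GRR, then clear denominators via torsion-freeness of $\Z[a_1,\dots,a_n]^{Sym}_d$'' is a genuine gap. Torsion-freeness of that polynomial ring only certifies integral identities between symmetric polynomials in Chern roots; it says nothing about identities in $\CH^*(\PP)$, which can have torsion. The right-hand side $f_*\bigl(T_l^2(l!)^2\,Td(T_X)\ch(E)\bigr)$ involves a Chow-theoretic pushforward and is not a priori the image of any symmetric polynomial on $\PP$. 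To rewrite it as such you must use $f_*f^* = c_r(\sQ)\cdot(-)$ together with $f^*\sQ\cong N$ and the normal bundle sequence $0\to T_X\to f^*T_{\PP}\to N\to 0$ to express $Td(T_X)\ch(E)$ as $f^*$ of a class on $\PP$. But once you have done that, you have already produced the integral identity directly; the appeal to rational GRR and torsion-freeness is never actually invoked and cannot substitute for this step.

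A second, smaller issue: the detour through the Euler sequence and $T_{\PP/X}$ is unnecessary and harder to control. Since $T_{\PP/X}\cong \sQ\otimes\sO(1)$, the factor $Td(T_{\PP/X})\cdot Td(\sQ)^{-1}$ does not collapse as cleanly as you suggest. The paper instead multiplies the expression $l!\,T_l\,\ch(f_*E) = l!\,T_l\,c_r(\sQ)\,Td(\sQ)^{-1}\ch(p^*E)$ by $T_l\,Td(T_{\PP})$, pushes everything inside $f_*$ via the projection formula, and then uses only the normal bundle sequence on $X$ to get $T_l^2\,Td(T_X) = T_l\,Td(N)^{-1}\cdot f^*\bigl(T_l\,Td(T_{\PP})\bigr)$. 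This keeps every manipulation manifestly integral at each stage, with exactly one factor of $T_l$ coming from Example~\ref{exexex} and one from the Todd-class identity, which is why $T_l^2$ (and not more) suffices.
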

\begin{proof}
Let $\sQ$ be the universal quotient bundle of $p$ on $\PP_X(N\oplus 1)$ i.e.
\[
\sQ:=p^*(N\oplus 1)/(L^{\text{taut}})
\]
where $L^{\text{taut}}$ is the tautological bundle of $p$. Let $s$ be the section of $Q$ determined by the projection of the trivial factor in $p^*(N\oplus 1)$ to $\sQ$. In this setting, for any vector bundle $E$ on $X$, the Koszul complex induces the following resolution of $f_*E$:
\[
0 \to \bigwedge^d \sQ^\vee \otimes p^*E \to ... \to   \sQ^\vee \otimes p^*E \to p^*E \to f_*E \to 0
\]
see the details in \cite[\S 15.1]{Fulton}. By Whitney sum formula \eqref{nakanosine} and the tensor product formula \eqref{otimes}, we have
\begin{equation}\label{lightkun}
(l!)^2\cdot \ch(f_*E)=\sum_{p=0}^d(-1)^p l!\cdot\ch\left ( \wedge^p \sQ^{\vee} \right ) \cdot l!\cdot \ch(p^*E)
\end{equation}
in $\CH^*(\PP_X(N\oplus 1))$. By Example~\ref{exexex}, we have
\begin{equation}\label{cleaner}
T_l\cdot \sum_{p=0}^d(-1)^p\ch(\bigwedge^p \sQ^{\vee})=T_l\cdot c_d(\sQ)  \cdot  Td(\sQ)^{-1}
\end{equation}
in $\CH^*(\PP_X(N\oplus 1))$. Since $l!$ divides $T_l$, combine the equations \eqref{lightkun} and \eqref{cleaner}, we obtain the equality
\begin{equation}\label{batman}
   l!\cdot T_l \cdot  \ch(f_*E) = l!\cdot T_l \cdot c_d(\sQ)  \cdot  Td(\sQ)^{-1}  \cdot \ch(p^*E)
\end{equation}
in $\CH^*(\PP_X(N\oplus 1))$. By \cite[Proposition~14.1]{Fulton} we know
\begin{equation}\label{granma}
f_*f^*\alpha = c_d(\sQ) \alpha 
\end{equation}
in $\CH^*(\PP_X(N\oplus 1))$ for any $\alpha \in \CH^*(\PP_X(N\oplus 1))$, where $f^*\alpha$ is the pull-back along regular embedding $f$. Thus we have equalities
\begin{equation}\label{Step1}
l!\cdot T_l \cdot \ch(f_*[E]) 
\stackrel{\eqref{batman}}{=} 
l!\cdot T_l \cdot c_d(\sQ) \cdot Td(\sQ)^{-1} \cdot \ch(p^*E)
\stackrel{\eqref{granma}}{=} 
f_*\left(f^*(T_l \cdot Td(\sQ)^{-1})\cdot f^*(l! \cdot\ch(p^*E)) \right).
\end{equation}

Now since $N$ is the normal bundle of $f$, and the varieties $X$ and $\PP_X(N\oplus 1)$ are smooth over $k$,  there is an exact sequence of a vector bundle on $X$
\begin{equation}\label{kimura}
0 \to  T_X \to f^*T_{\PP_X(N\oplus 1)} \to N \to 0.
\end{equation}
So by the equation \eqref{pullbackTd} and the additive formula of integral Todd class \eqref{Dentsuusine}, \cite[Proposition 6.3]{Fulton}, we have
\begin{equation}\label{ToddStep2}
T_l^2 \cdot Td(T_X)=T_l\cdot Td(N)^{-1}\cdot f^*( T_l \cdot Td(T_{\PP_X(N\oplus 1)}))
\end{equation}
in $\CH^*(X)$. Now we know $f^*p^*E=E$ and $f^*\sQ=N$, so since $l!\cdot \ch$ and $T_l \cdot Td$ commute with pull-backs along a regular embedding (see \cite[Proposition 6.3]{Fulton}), if we multiply \eqref{Step1} by $T_l \cdot Td(T_{\PP_X(N\oplus 1)})$ we have the following
\begin{eqnarray} \label{fastball}
l!\cdot T_l\cdot\ch(f_*[E]) \cdot  T_l \cdot Td(T_{\PP_X(N\oplus 1)})
&\stackrel{\eqref{Step1}}{=}&f_*\left(f^*( T_l\cdot Td(\sQ)^{-1})\cdot f^*(l!\cdot \ch(p^*E)) \right) \cdot  T_l \cdot Td(T_{\PP_X(N\oplus 1)}) \nonumber \\
&\stackrel{\textrm{proj.form.}}{=}&f_*\left(T_l\cdot Td(N)^{-1}\cdot l! \cdot \ch(E) \cdot f^*(T_l \cdot Td(Td_{\PP_X(N\oplus 1)})) \right)\nonumber\\
&\stackrel{\eqref{ToddStep2}}{=}& f_*\bigl(T_l^2\cdot Td(T_X) \cdot l!\cdot  \ch(E)\bigr)\nonumber
\end{eqnarray}
in $\CH^*(\PP_X(N\oplus 1))$. Thus we obtain the desired equality
\begin{equation*}
T_l^2\cdot (l!)^2 \cdot Td(T_{\PP_X(N\oplus 1)}) \cdot \ch(f_*[E]) = f_*\bigl(T_l^2\cdot (l!)^2 \cdot Td(T_X) \cdot \ch(E)\bigr)
\end{equation*}
in $\CH^*(\PP_X(N\oplus 1))$. 
\end{proof}

\subsection{Integral Grothendieck Riemann-Roch for projections}\label{GRRproj}
For smooth projective varieties $X$ and $Y$ over $k$, we consider the map 
\begin{eqnarray*}
K_0(X)\otimes K_0(Y) &\to& K_0(X\times Y)\\
  ~[E]\otimes [G] & \mapsto & [p^*E \otimes q^*G] 
\end{eqnarray*}
where $p$ is the projection $X\times Y \to X$ and $q$ is the projection $X \times Y \to Y$, and the map
\begin{eqnarray*}
\CH^*(X) \otimes_\Z \CH^*(Y) &\to& \CH^*(X\times Y)\\
Z \otimes W &\mapsto & p^* Z \cdot q^*W
\end{eqnarray*}
Consider the composition 
\[
\psi:K_0(X)\otimes K_0(Y) \to K_0(X\times Y) \overset{p_*}{\to} K_0(X)
\]
and the composition
\[
\phi:\CH^*(X) \otimes_\Z \CH^*(Y) \to \CH^*(X\times Y) \overset{p_*}{\to} \CH^*(X).
\]
For a vector bundle $E$ on $X$ and a vector bundle $G$ on $Y$, by the projection formula, we have 
\begin{equation}\label{pingpongfree}
\psi(E\otimes G)=p_*\left( p^*E\otimes q^*G \right)= E\otimes p_*q^*G
\end{equation}
in $K_0(X)$. Let us study $p_*q^*G$.
\begin{lemma}\label{lemlem}
Consider the Cartesian diagram  of Noetherian schemes
\[
\xymatrix{
N'\ar[r]^{h'} \ar[d]_{g'} & N \ar[d]^{g}\\
M' \ar[r]_{h}& M}
\]
where $g$ is proper. If $h$ is flat, then there is an equation
\[
h^*g_*\alpha ={g'}_*{h'}^*\alpha
\]
in $K_0(M')$ for any $\alpha \in K_0(N)$.
\end{lemma}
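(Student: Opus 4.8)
The plan is to reduce the statement to the classical flat base-change isomorphism for higher direct images of quasi-coherent sheaves and then pass to $K_0$. First I would recall that $K_0(-)$ of coherent sheaves is generated by classes $[\sF]$ of coherent sheaves, and that both sides of the claimed equation are additive in $\alpha$; hence it suffices to prove the identity for $\alpha = [\sF]$ with $\sF$ a coherent sheaf on $N$. For such an $\sF$, by definition $g_*[\sF] = \sum_{i \geq 0}(-1)^i[R^i g_* \sF]$ in $K_0(M)$ (the sum is finite since $g$ is proper and $M$ Noetherian, so the $R^i g_*\sF$ vanish for $i$ large and are coherent). Applying the flat pullback $h^*$ — which is exact because $h$ is flat, hence a well-defined ring/module homomorphism on $K_0$ sending $[\sG]$ to $[h^*\sG]$ — we get $h^* g_*[\sF] = \sum_{i\ge 0}(-1)^i [h^* R^i g_*\sF]$.

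Next I would invoke the flat base-change theorem (e.g. \cite[III.9.3]{Hartshorne} or the corresponding statement in EGA): since $h:M' \to M$ is flat and $g$ is proper (separated and quasi-compact suffices), for every $i$ there is a canonical isomorphism of coherent sheaves on $M'$
\[
h^* R^i g_* \sF \;\xrightarrow{\ \sim\ }\; R^i g'_* (h'^* \sF).
\]
Substituting this into the previous expression yields $h^* g_*[\sF] = \sum_{i\ge 0}(-1)^i [R^i g'_* (h'^*\sF)]$. Finally, since $g'$ is proper (being a base change of the proper morphism $g$) and $M'$ is Noetherian, the right-hand side is by definition $g'_*[h'^*\sF] = g'_* h'^*[\sF]$ in $K_0(M')$, which is exactly the desired equality.

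The only genuine points requiring care are that $h'^*\sF$ again lies in the coherent category so that $g'_* h'^*[\sF]$ makes sense — this holds because $h'$ is flat (a base change of the flat $h$), so $h'^*$ is exact and preserves coherence over Noetherian schemes — and that the alternating sums are finite, which follows from properness plus Noetherian hypotheses bounding the cohomological dimension. I do not expect any serious obstacle here: the lemma is essentially a formal consequence of flat base change for $R^i g_*$ once one checks that all the functors involved ($h^*$, $h'^*$, $g_*$, $g'_*$) descend to $K_0$, which is standard since $h, h'$ are flat and $g, g'$ are proper between Noetherian schemes.
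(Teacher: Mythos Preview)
Your proposal is correct and follows the same approach as the paper: reduce to classes of coherent sheaves and invoke the flat base-change isomorphism $h^*R^ig_*\sF \simeq R^ig'_*h'^*\sF$. The paper's proof is a terse two-line version of exactly this argument, so there is nothing to add.
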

\begin{proof}
It is enough to show that
\[
h^*R^ig_*\sF\simeq R^i{g'}_*{h'}^*\sF
\]
for any $i\in\Z$ and any coherent sheaf $\sF$ on $N$. This isomorphism holds in the case $h$ is flat.
\end{proof}
We denote the structure map of $X$ by $f$, the structure map of $Y$ by $g$.
\[
\xymatrix{
X\times Y \ar[r]^-{q} \ar[d]_-{p} & Y \ar[d]^-{g}\\
X \ar[r]_-{f}& \Spec k}
\]
For a vector bundle $G$ on $Y$, by the definition of push-forward for the Grothendieck group, we have
\begin{equation}\label{pingpong}
g_*G=\left(\Sigma_{i=0}^\infty(-1)^i\dim_k H^i(Y,G)\right)  \cdot [\sO_{\Spec k}] \in K_0(\Spec k).
\end{equation}
By Lemma~\ref{lemlem} and the equation~\eqref{pingpong}, we have
\begin{equation}\label{pingpong2}
p_*q^* G =f^*g_*G=\left(\Sigma_{i=0}^\infty(-1)^i\dim_k H^i(Y,G)\right)\cdot[\sO_X]
\end{equation}
in $K_0(X)$. Combine the equations \eqref{pingpongfree} and \eqref{pingpong2}, we have
\begin{equation}\label{GOODPING}
    \psi(E\otimes G)\overset{\eqref{pingpongfree}}{=}E\otimes p_*q^*G \overset{\eqref{pingpong2}}{=} \left(\Sigma_{i=0}^\infty(-1)^i\dim_k H^i(Y,G)\right)\cdot E
\end{equation}
in $K_0(X)$.

Since $\ch$ and $Td$ commute with pull-backs,  \eqref{pullbackch} and \eqref{pullbackTd} respectively, and $T_{X \times Y} \cong p^*T_X \oplus q^*T_Y$ and the additive formula for Todd classes \eqref{Dentsuusine} and tensor product formula for Chern character \eqref{otimes} we have \footnotesize
\begin{eqnarray}
p^*\biggl (l!\cdot \ch(E)\cdot T_l \cdot Td(T_X) \biggr )\cdot q^* \biggl (l!\cdot \ch(G)\cdot T_l\cdot Td(T_Y)\biggr )&=& l!\cdot p^*\ch(E)\cdot T_l\cdot p^*Td(T_X)\cdot l!\cdot q^*\ch(G)\cdot T_l\cdot q^*Td(T_Y)\nonumber \\
&=& (l!)^2\cdot \ch\left(p^*E\otimes q^*G  \right) \cdot T_l^2\cdot Td(p^*T_X\oplus q^*T_Y) \nonumber \\
&=&  (l!)^2\cdot \ch\left(p^*E\otimes q^*G  \right) \cdot T_l^2\cdot Td(T_{X\times Y}) \nonumber 
\end{eqnarray} \normalsize
in $\CH^*(X\times Y)$ for any vector bundle $E$ on $X$ and any vector bundle $G$ on $Y$. Thus the diagram \footnotesize
\begin{equation}\label{first}\xymatrix{
K_0(X)\otimes K_0(Y) \ar[rrrrrrr]^-{l!\cdot \ch(-)\cdot T_l \cdot Td(T_X)\otimes l! \cdot \ch(-)\cdot T_l \cdot Td(T_Y)} \ar[d]_{p^*(-)\otimes q^*(-)}&&&& &&& \CH^*(X) \otimes_\Z \CH^*(Y) \ar[d]^{p^*(-)\cdot q^*(-) }\\
K_0(X\times Y)\ar[rrrrrrr]^{(l!)^2 \cdot \ch(-)\cdot T_l^2\cdot Td(T_{X\times Y})}&&&& &&& \CH^*(X\times Y)\\
}
\end{equation} \normalsize
commutes. 

For a cycle $Z$ on $X$, a cycle $W$ on $Y$, by the projection formula of cycles, we have
\begin{equation}\label{pandafree}
    \phi(Z\otimes W)=p_*\left(p^*Z\cdot q^*W\right) = Z \cdot p_*q^*W.
\end{equation}
We denote by $W_0$ the $0$-dimensional part of $W$. Then by the definition of the flat pull-back of cycles and proper push-forward of cycles, we have
\begin{equation}\label{panda}
   p_*q^* W = (\deg W_0)\cdot [X]
\end{equation}
By the equations~\eqref{pandafree}, we have
\begin{equation}\label{GOODPANDA}
\phi(Z\otimes W)=(\deg W_0)\cdot [Z].
\end{equation}

\begin{prop}\label{GRRprojection}
If $Y=\PP^m$ and $l \geq \dim X + m$, then the equation
\[
(l!)^2 \cdot T_l^2 \cdot \ch(p_*x)\cdot  Td(T_X)=  p_*\bigl((l!)^2 \cdot T_l^2 \cdot \ch(x)\cdot Td(T_{X\times Y})\bigr)
\]
holds in $\CH^*(X)$ for any $x\in K_0(X\times Y)$.
\end{prop}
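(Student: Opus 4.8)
The plan is to reduce the general $x$ to external products $p^*[E]\cdot q^*[G]$ and then feed everything into the commutative square \eqref{first}, the formula \eqref{GOODPING}, and the formula \eqref{panda}, the only non-formal ingredient being classical Riemann--Roch on $\PP^m$. First I would note that both maps $x\mapsto (l!)^2 T_l^2\ch(p_*x)Td(T_X)$ and $x\mapsto p_*\bigl((l!)^2 T_l^2\ch(x)Td(T_{X\times Y})\bigr)$ are additive homomorphisms $K_0(X\times\PP^m)\to\CH^*(X)$, being composites of the additive pushforwards $p_*$ with the additive map $l!\cdot\ch$ and with multiplication by fixed classes. Hence it suffices to prove the identity on a generating set of the abelian group $K_0(X\times\PP^m)$. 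By the projective bundle formula in $K$-theory, $K_0(X\times\PP^m)=\bigoplus_{i=0}^{m}p^*K_0(X)\cdot q^*[\sO_{\PP^m}(i)]$; since $X$ is smooth, $K_0(X)=K^0(X)$ is generated by vector bundle classes, so $K_0(X\times\PP^m)$ is generated by classes $p^*[E]\cdot q^*[G]$ with $E$ a vector bundle on $X$ and $G$ a vector bundle on $\PP^m$, i.e.\ by the image of $K_0(X)\otimes K_0(\PP^m)\to K_0(X\times\PP^m)$. So I only need to treat $x=p^*[E]\cdot q^*[G]$.

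For such $x$, \eqref{GOODPING} gives $p_*x=\chi(\PP^m,G)\cdot[E]$ with $\chi(\PP^m,G)=\Sigma_i(-1)^i\dim_k H^i(\PP^m,G)$, so by additivity of $l!\cdot\ch$ the left-hand side equals $\chi(\PP^m,G)\cdot(l!)^2 T_l^2\ch(E)Td(T_X)$ in $\CH^*(X)$. For the right-hand side, the commutativity of \eqref{first} rewrites $(l!)^2\ch(x)\cdot T_l^2 Td(T_{X\times Y})$ as $p^*\bigl(l!\ch(E)\cdot T_l Td(T_X)\bigr)\cdot q^*\bigl(l!\ch(G)\cdot T_l Td(T_{\PP^m})\bigr)$, and then the projection formula gives
\[
p_*\bigl((l!)^2 T_l^2\ch(x)Td(T_{X\times Y})\bigr)=l!\ch(E)\cdot T_l Td(T_X)\cdot p_*q^*\bigl(l!\ch(G)\cdot T_l Td(T_{\PP^m})\bigr).
\]
By \eqref{panda} the last factor is $\deg\bigl((l!\ch(G)\cdot T_l Td(T_{\PP^m}))_0\bigr)\cdot[X]$.

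It remains to identify that degree. In $\CH^*(\PP^m)_\Q$ one has $l!\ch(G)\cdot T_l Td(T_{\PP^m})=l!\,T_l\cdot\ch(G)Td(T_{\PP^m})$, so its $0$-dimensional component is $l!\,T_l$ times that of $\ch(G)Td(T_{\PP^m})$, whose degree is $\chi(\PP^m,G)$ by the classical Hirzebruch--Riemann--Roch theorem on the smooth projective variety $\PP^m$ (only the $\Q$-coefficient statement is used, applied to an integer multiple). Hence the right-hand side equals $l!\,T_l\,\chi(\PP^m,G)\cdot l!\ch(E)\cdot T_l Td(T_X)=\chi(\PP^m,G)\cdot(l!)^2 T_l^2\ch(E)Td(T_X)$, matching the left-hand side, which proves the proposition. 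The argument is almost entirely formal once the two external inputs are secured, and I expect the main obstacle to be the reduction step: making the projective bundle formula available and checking that $l!\ch(G)\cdot T_l Td(T_{\PP^m})$ is genuinely integral — which holds because $l\geq\dim X+m\geq m$, so the integral Chern character and Todd class estimates of \S\ref{23} apply simultaneously on $X$, on $\PP^m$, and on $X\times\PP^m$.
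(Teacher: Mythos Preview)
Your proof is correct and follows essentially the same approach as the paper: reduce to external products via the surjectivity of $K_0(X)\otimes K_0(\PP^m)\to K_0(X\times\PP^m)$, compute the left side with \eqref{GOODPING}, compute the right side via \eqref{first} and \eqref{panda}, and match them using Hirzebruch--Riemann--Roch on $\PP^m$. The paper packages this as the commutativity of a two-square diagram (outer square commutes, upper square \eqref{first} commutes, left vertical is surjective, hence lower square commutes), while you compute both sides directly on generators; the content is identical.
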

\begin{proof}
Consider $Y=\PP^m$. The Grothendieck group $K_0(Y)$ is generated by $[\sO(n)],n\geq 0$ (see \cite[Example~15.1.1]{Fulton}). For a vector bundle $E$ on $X$, by \eqref{GOODPING} we have
\begin{equation}\label{OSAMU}
l!\cdot \ch(\psi([E]\otimes[\sO(n)]))\cdot T_l \cdot  Td(T_X)=\chi(\PP^m,\sO(n))\cdot l! \cdot \ch([ E])\cdot T_l \cdot Td(T_X).
\end{equation}
Also by the equation \eqref{GOODPANDA}, we have
\begin{align}\label{AKIYAMA}
\phi\bigl(l!\cdot \ch(E)\cdot T_l \cdot  Td(T_X)
&\otimes l! \cdot  \ch(\sO(n))\cdot T_l \cdot  Td(T_{\PP^m})\bigr) \\
&=\deg\bigl((l!\cdot  \ch(\sO(n))\cdot T_l \cdot  Td(T_{\PP^m}))_0\bigr)\cdot l! \cdot \ch([ E])\cdot T_l \cdot  Td(T_X).\nonumber
\end{align}
By Hirzebruch Riemann-Roch theorem we have
\begin{equation}\label{TEZUKA2}
\chi(\PP^m,\sO(n))= \deg\bigl(( \ch(\sO(n))\cdot Td(T_{\PP^m}))_0\bigr).
\end{equation}
By equations \eqref{OSAMU}, \eqref{AKIYAMA} and \eqref{TEZUKA2} we obtain that 
\[
\ch(\psi([E]\otimes[\sO(n)]))\cdot Td(T_X)=\phi\bigl(\ch(E)\cdot Td(T_X)\otimes \ch(\sO(n))\cdot Td(T_{\PP^m})\bigr)
\]
i.e. in the following diagram \footnotesize 
\[\xymatrix{
K_0(X)\otimes K_0(Y) \ar[rrrrrrr]^-{l!\cdot \ch(-)\cdot T_l \cdot Td(T_X)\otimes l! \cdot  \ch(-)\cdot T_l \cdot Td(T_Y)} \ar@/_30pt/[dd]_{\psi} \ar[d]_{}&&&&&& & \CH^*(X) \otimes_\Z \CH^*(Y) \ar[d]  \ar@/^40pt/[dd]^{\phi} \\
K_0(X\times Y)\ar[d]_{p_*} \ar[rrrrrrr]^{(l!)^2 \cdot \ch(-)\cdot T_l^2 \cdot Td(T_{X\times Y})}&&& && &&\CH^*(X\times Y)\ar[d]^{p_*}\\
K_0(X) \ar[rrrrrrr]_{(l!)^2 \cdot \ch(-) \cdot T_l^2 \cdot Td(T_X)}&&&&&& &\CH^*(X)
}
\]\normalsize
the outer square and the upper square are commutative (see \eqref{first}). Since we take $Y=\PP^m$ the map $K_0(X) \otimes K_0(Y) \to K_0(X\times Y)$ is surjective (see \cite[example~15.1.1]{Fulton}), so the lower square in the diagram also commutes. Hence we obtain that
\[
(l!)^2\cdot T_l^2 \cdot \ch(p_*x)\cdot Td(T_X)= p_*\bigl((l!)^2 \cdot T_l^2 \cdot \ch(x) \cdot Td(T_{X\times Y})\bigr)
\]
in $\CH^*(X)$.\end{proof}

\subsection{Integral Grothendieck Riemann-Roch for closed embeddings}\label{28}
Let $f:X \to Y$ be a closed immersion of quasi-projective varieties over field $k$. We denote the normal cone of $f$ by $C_XY$, and the zero-section $X \to C_XY$ by $s$. In this setting, there is a quadruple $(M,	\varrho,q,F)$ (see \cite[Chapter~5]{Fulton}):
\begin{equation}\label{HOIMI}
\xymatrix{
X \times \PP^1 \ar@<-0.3ex>@{^{(}->}[rr]^-{F} \ar[rd]_-{pr_2} & & M \ar[ld]^{\varrho} \ar[rd]^{q}& \\
& \PP^1=\Proj k[t_0,t_1] & & Y
}
\end{equation}
satisfying followings
\begin{itemize}
\item $M$ is the blowing up of $Y\times \PP^1$ along $X\times\{\infty\}$
    \item $\varrho$ is flat.
    \item $F$ is a closed immersion
    \item Over $\PP^1 \backslash \{\infty\}=\AA^1$, $\varrho^{-1}(\AA^1)=Y\times \AA^1$ and the closed embedding $\varphi$ is the trivial embedding $X\times\AA^1 \to Y \times\AA^1$.
    \item Over $\{\infty\}$, $\varphi$ is $s:X \to C_XY$.
    \item The composition of maps $p \circ \varphi$ is equal to the composition of maps $X\times\PP^1 \overset{pr_1}{\to} X \overset{i}{\to} Y$. 
    \item The divisor $M_{\infty}=\varrho^{-1}(\infty)$ is the sum of two effective Cartier divisors:
    \[
    M_{\infty}= \PP_X(C_XY \oplus 1) + \tilde{Y}
    \]
    where $\tilde{Y}$ is the blowing up of $Y$ along $X$.
\end{itemize}

\begin{example}(\cite[Example~5.1.1]{Fulton})\label{example2.5}
Assume $Y$ is quasi-projective variety over a field $k$. Let $i:Y \to M$ be the closed embedding of $Y$ at $t_0/t_1=0$, and $j$ (resp. $k$) is the canonical embedding of $\PP_X(C_XY \oplus 1)$ (resp. $\tilde{Y}$) in $M$ over $t_1/t_0=0$. Then 
\[
i_* [Y] =j_*[\PP_X(C_XY \oplus 1)] + k_*[\tilde{Y}] 
\]
in $\CH^*(M)$, where $[-]$ indicates the cycle associated to a closed subvariety.
\end{example}

We trace the discussion in \cite[15.2]{Fulton}. Let $f:X \to Y$ be a closed embedding of smooth projective varieties, $d_X$ be the dimension of $X$, and $d_Y$ be the dimension of $Y$. Take $l \geq d_Y$. We prove the following.
\begin{prop}\label{GRRembedding}
Suppose $l \geq d_Y$. Then for any vector bundle $E$ on $X$ there is an equality
\begin{equation}\label{sanban}
   (l!)^2\cdot T_l^2\cdot  \ch(f_*E) Td(T_Y)=(l!)^2\cdot T_l^2\cdot f_*\bigl(\ch(E)Td(T_X)\bigr)
\end{equation}
in $\CH^*(Y)$.
\end{prop}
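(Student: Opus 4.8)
The plan is to run the classical deformation‑to‑the‑normal‑cone argument (\cite[15.2]{Fulton}), carrying the integral factors $(l!)^2T_l^2$ along throughout. Since $f$ is a closed immersion of smooth varieties it is a regular embedding; write $N := N_{X/Y}$ for its normal bundle, a rank $r = d_Y - d_X$ vector bundle on $X$, and put $P := \PP_X(N\oplus 1)$, so $\dim P = d_Y$ and the zero section $s\colon X \hookrightarrow P$ is the embedding of Proposition~\ref{GRRprojsection}. We may assume $d_X < d_Y$ (if $X = Y$ the statement is trivial), so that $\dim(X\times\PP^1) = d_X+1 \le d_Y \le l$; recall that $(l!)^2T_l^2\cdot\ch(-)\cdot Td(-)$ takes integral values on a smooth variety of dimension $\le l$. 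Let $M$ be the deformation space attached to $f$ as in \eqref{HOIMI}: it is smooth projective of dimension $d_Y+1$, $\varrho\colon M\to\PP^1$ is flat, $q\colon M\to Y$ is proper with $q\circ\iota_0 = \id_Y$ where $\iota_0\colon Y = \varrho^{-1}(0)\hookrightarrow M$, and $F\colon W := X\times\PP^1\hookrightarrow M$ is a regular closed immersion whose restriction over $0$ is $f$ and over $\infty$ is $s$; moreover $\varrho^{-1}(\infty) = P + \tilde Y$ as divisors, with $s(X)\subset P$ disjoint from $\tilde Y$ (Example~\ref{example2.5}).

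First I would introduce on $M$ the discrepancy class
\[
\Gamma := (l!)^2T_l^2\Bigl(\ch\bigl(F_* pr_1^*E\bigr)\,Td(T_M) - F_*\bigl(\ch(pr_1^*E)\,Td(T_W)\bigr)\Bigr) \in \CH^*(M)_\Q,
\]
where $pr_1\colon W\to X$; both summands lie in the image of $\CH_*(W)_\Q\to\CH_*(M)_\Q$ (for the first, the Chern character of a coherent sheaf supported on a closed set $Z$ is supported on $Z$, by classical Riemann--Roch). Next I would compute the refined Gysin pull‑backs of $\Gamma$ to the divisors $Y$, $P$, $\tilde Y$ of $M$, using: Tor‑independence of the base‑change squares over $0$ and $\infty$ (so that $L\iota_0^*F_*pr_1^*E = f_*E$ and $L\iota_P^*F_*pr_1^*E = s_*E$ in $K_0$); the compatibility of $\ch$ and $Td$ with Gysin pull‑back along regular embeddings (\cite[Proposition~6.3]{Fulton}); the conormal data over $0$, namely $\iota_0^*T_M \equiv T_Y\oplus\sO_Y$ and $T_W|_{X\times\{0\}}\equiv T_X\oplus\sO_X$ in $K$‑theory; and over $\infty$ the identities $N_{P/M}\cong\sO_P(-1)$ and, crucially, $s^*\sO_P(-1)\cong\sO_X$. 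This should yield $\iota_{\tilde Y}^!\Gamma = 0$ (disjoint supports),
\[
\iota_0^!\Gamma = (l!)^2T_l^2\bigl(\ch(f_*E)\,Td(T_Y) - f_*(\ch(E)\,Td(T_X))\bigr) =: \Delta,
\]
the class to be shown zero, and — after absorbing the harmless factor $Td(\sO_P(-1))$ into the $s(X)$‑supported class $\ch(s_*E)$ via the projection formula and $s^*\sO_P(-1)\cong\sO_X$ —
\[
\iota_P^!\Gamma = (l!)^2T_l^2\bigl(\ch(s_*E)\,Td(T_P) - s_*(\ch(E)\,Td(T_X))\bigr) = 0
\]
by Proposition~\ref{GRRprojsection}, whose hypothesis $l\ge r+d_X = d_Y$ is exactly our assumption.

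To conclude I would use that the fibres of $\varrho$ are rationally equivalent: $[\varrho^{-1}(0)] = [\varrho^{-1}(\infty)] = [P] + [\tilde Y]$ in $\CH^1(M)$. Capping $\Gamma$ with these divisor classes, via $c_1(\sO_M(D))\cap(-) = \iota_{D*}\iota_D^!(-)$ for $D \in\{Y, P, \tilde Y\}$, gives $\iota_{0*}\Delta = \iota_{P*}\iota_P^!\Gamma + \iota_{\tilde Y*}\iota_{\tilde Y}^!\Gamma = 0$; applying $q_*$ and using $q_*\iota_{0*} = (q\circ\iota_0)_* = \id$ then gives $\Delta = 0$ in $\CH^*(Y)$, which is the assertion.

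The step I expect to demand the most care is denominator control: $l!\,\ch$ and $T_l\,Td$ are only known to be integral in codimension $\le l$, whereas $\dim M = d_Y+1$ may exceed $l$. The point is that each Gysin pull‑back above lowers dimension by one, so the single offending component of $\Gamma$ — its $\CH_0(M)$‑part, the only one whose denominator could require $l\ge d_Y+1$ — is annihilated by all three restrictions and may be discarded from the start; the surviving components sit in codimension $\le d_Y\le l$ and are genuinely integral, so the whole chain of identities, and in particular $\Delta = 0$, takes place in integral (not merely rational) Chow groups. A second delicate point is the normal‑bundle bookkeeping at $\infty$: unlike the whole fibre $\varrho^{-1}(0)=Y$, the component $P\subset\varrho^{-1}(\infty)$ has non‑trivial conormal bundle $\sO_P(1)$ in $M$, and it is precisely the triviality of its restriction $s^*\sO_P(-1)\cong\sO_X$ to $s(X)$ — the support of every class that matters — that makes the reduction to Proposition~\ref{GRRprojsection} go through cleanly.
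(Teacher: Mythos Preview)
Your proposal is correct and follows the same deformation-to-the-normal-cone strategy as the paper, both tracking \cite[\S15.2]{Fulton}. The paper's execution differs only cosmetically: rather than a discrepancy class with $Td(T_M)$ built in, it works with an explicit locally free resolution $G_\bullet$ of $F_*pr_1^*E$, manipulates $l!\cdot\ch(G_\bullet)$ alone via the projection formula, and postpones all Todd classes until the very end on $Y$ (invoking the intermediate identity \eqref{Step1} with $Td(N)^{-1}$ instead of the full Proposition~\ref{GRRprojsection}), which lets it sidestep the normal-bundle bookkeeping at $P\subset M$ that you correctly flag as the delicate point.
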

\begin{proof}
Consider the quadruple $(M,\varrho,p,F)$ (see \eqref{HOIMI}). There is a commutative diagram
\[
\xymatrix{
 & X \ar[d]_{i_\infty} \ar[r]^-{\overline{f}} \ar[ld]_{id}& \PP_X(N\oplus 1) + \tilde{Y} \ar[d]_{k+ g} \ar@{=}[r] & M^\infty \ar[r] \ar[ld]^{j_{\infty}}& \{\infty\}\ar[d]\\
X & X\times \PP^1 \ar[l]^{p} \ar[r]_{F} & M \ar[rr]^{\varrho} & & \PP^1\\
 & X \ar[u]^{i_0} \ar[ul]^{id} \ar[r]_{f} & Y \ar[u]_{j_0} \ar[rr]& & \{0\} \ar[u]}
\]
where $N$ is the normal cone of $f$, and $p$ is the natural projection, $i_{\infty}$ (resp. $i_0$) is $\{\infty\}$-section (resp. $\{0\}$-section), $j_0$ is the natural embedding $\varrho^{-1}(\{0\})=Y \hookrightarrow M$ and $j_\infty$ is the natural embedding $\varrho^{-1}(\{\infty\})=M^\infty\hookrightarrow M$. We denote the natural embedding $\tilde{Y} \hookrightarrow M$ by $g$. We note that $\dim \PP_X(N \oplus 1)=d_Y$, $\dim \tilde{Y} =d_Y$ and $\dim M=d_Y+1$, every square in the diagram is Cartesian. Let $E$ be a vector bundle on $X$. Let $\tilde{E}:=p^*E$, and choose a resolution $G_{\bullet}$ of $F_*(\tilde{E})$ on $M$:
\begin{equation}\label{HUUIN}
0 \to G_n \to G_{n-1} \to ... \to G_0 \to F_*(\tilde{E}) \to 0.
\end{equation}
We note $\varrho \circ F:X\times \PP^1 \to \PP^1$ and $\varrho:M\to \PP^1$ are flt, and since $\tilde{E}$ is a vector bundle on $X\times \PP^1$ thus $F_*\tilde{E}$ is flat over $\PP^1$. We know the exact sequence \eqref{HUUIN} consists of coherent sheaves which are flat over $\PP^1$. Thus the restriction of the exact sequence \eqref{HUUIN} to the fibers $j_0:Y\hookrightarrow M$ and $j_\infty:M^\infty \hookrightarrow Y$ remain exact, we know $j_0^*G_{\bullet}$ is a resolution of $j_0^*F_*(\tilde{E})$ and 
We know $j_0^*G_{\bullet}$ is a resolution of $j_0^*F_*(\tilde{E})$. 

Since we have $\tilde{E}=p^*E$, $p\circ i_0=id_X$, we obtain an equality of coherent sheaves:
\begin{equation}
j_0^*F_*\tilde{E}=f_*i_0^*\tilde{E}=f_*i_0^*p^*E=f_*E.
\end{equation}
Since we know $j_0^*G_{\bullet}$ is a resolution of $j_0^*F_*(\tilde{E})$, thus $j_0^*G_{\bullet}$ is the resolution of $f_*E$. Also we know $\tilde{E}=p^*E$ and $p\circ i_{\infty}=id_X$, we have an equality of coherent sheaves:
\begin{equation}
j_{\infty}^*F_*\tilde{E}=\ol{f}_*i_{\infty}^*\tilde{E}=\ol{f}_*i_{\infty}^*p^*E=\ol{f}_*E.
\end{equation}
Since we know $j_{\infty}^*G_{\bullet}$ is the resolution of $j_{\infty}^*F_*(\tilde{E})$, thus $j_\infty^*G_{\bullet}$ is the resolution of $\overline{f}_*E$.

Since $\overline{f}(X)$ is disjoint with $\tilde{Y}$, thus $k^*G_{\bullet}$ is the resolution of $\overline{f}_*(E)$ on $\PP_X(N\oplus 1)$, and $g^*G_{\bullet}$ is acyclic. By the projection formula \eqref{projectionch} we have
\begin{equation}\label{TAKADA}
{j_0}_*\bigl(l!\cdot \ch(f_*E)\bigr)={j_0}_*\bigl(l!\cdot \ch(j_0^*G_{\bullet})\bigr) \overset{\text{proj.form}}{=}l!\cdot \ch(G_{\bullet})\cap {j_0}_*[Y].
\end{equation}
By example~\ref{example2.5}, we know
\begin{equation}\label{TAKADA2}
l!\cdot \ch(G_{\bullet})\cap {j_0}_*[Y]=l!\cdot \ch(G_{\bullet})\cap \bigl(k_*[\PP_X(N\oplus 1)] + g_*[\tilde{Y}]\bigr).\end{equation}
Continuing, with the projection formula \eqref{projectionch} for $k$ and $g$, since we know $g^*G_{\bullet}$ is acyclic, we have
\begin{equation}\label{TAKADA3}
l!\cdot \ch(G_{\bullet})\cap\bigl(k_*[\PP_X(N\oplus 1)] + g_*[\tilde{Y}]\bigr)\overset{\text{proj.form}}{=}{k}_*\bigl(l!\cdot\ch(k^*G_{\bullet})\bigr)+{g}_*\bigl(l!\cdot\ch(g^*G_{\bullet})\bigr)={k}_*\bigl(l!\cdot\ch(\overline{f}_*E)\bigr).
\end{equation}
Let $\sQ$ be the universal quotient bundle of the projection $\PP_X(N\oplus 1) \to X$ on $\PP_X(N\oplus 1)$. Since we assume $l \geq d_Y = \dim \PP_X(N\oplus 1) =d_X + \text{rank} N $, thus by  by the equation \eqref{Step1} and $\ol{f}^*Q=N$, we have
\begin{eqnarray}\label{TAIHO}
l! \cdot T_l \cdot \ch(\overline{f}_*E) 
\overset{\eqref{ToddStep2}}{=}&\overline{f}_*\bigl(l!\cdot T_l \cdot Td(N)^{-1}\ch(E)  \bigr)
\end{eqnarray}
in $\CH^*(\PP_X(N\oplus 1))$. Now we have the following equations:
\begin{eqnarray}\label{Oniisan}
{j_0}_*\bigl(l!\cdot T_l \cdot \ch(f_*E)\bigr)&\overset{\eqref{TAKADA}}{=}& l!\cdot T_l \cdot \ch(G_{\bullet})\cap {j_0}_*[Y]\\
&\overset{\eqref{TAKADA2}}{=}&l!\cdot T_l \cdot \ch(G_{\bullet})\cap \bigl(k_*[\PP_X(N\oplus 1)] + g_*[\tilde{Y}]\bigr)\nonumber \\
&\overset{\eqref{TAKADA3}}{=}&l!\cdot T_l \cdot {k}_*\bigl(\ch(\overline{f}_*E)\bigr) \nonumber \\
&\overset{\eqref{TAIHO}}{=}& l!\cdot T_l \cdot k_*\overline{f}_*\bigl( Td(N)^{-1}\ch(E)  \bigr)\nonumber
\end{eqnarray}
in $\CH^*(M)$. Let $q$ be the composition of the blowing down $M \to Y \times \PP^1$ followed by the projection to $Y$. Since $q\circ j_0 = id_{Y}$ and $q \circ k \circ \overline{f}=f$, applying $q_*$ to \eqref{Oniisan}, we deduce
\begin{equation}\label{matsu}
l!\cdot T_l \cdot \ch(f_*E)=f_*\bigl( l!\cdot T_l \cdot Td(N)^{-1}\cdot \ch(E)  \bigr).
\end{equation}
Since $N$ is the normal cone of $f$, by the additive formula of integral Todd class, we have 
\[
T_l^2 \cdot Td(T_X)=T_l\cdot Td(N)^{-1} \cdot T_l \cdot  f^*Td(T_Y),
\]
and by the projection formula, w obtain 
\[
T_l^2 \cdot f_*Td(T_X)=T_l\cdot f_*Td(N)^{-1} \cdot T_l \cdot Td(T_Y)
\]
    Thanks to this equation, if we multiply the equation \eqref{matsu} by $T_l \cdot Td(T_Y)$ then we obtain
\[
l!\cdot T_l^2 \cdot \ch(f_*E)\cdot Td(T_Y)=f_*\bigl( l!\cdot T_l^2 \cdot Td(N)^{-1}\ch(E)  \bigr)\cdot T_l\cdot Td(T_Y)=f_*\bigl( l!\cdot T_l^2\cdot \ch(E) \cdot Td(T_X)  \bigr).  \]
This is the what we want.
\end{proof}

\subsection{Proof of Integral Grothendieck Riemann-Roch}\label{29}
\begin{thm}[Integral Grothendieck Riemann-Roch] \label{GRRGRR}\label{IUTT}
Let $f:X \to Y$ be a projective morphism of smooth projective varieties over a field $k$. We assume $X,Y \in \Sm\Proj^{\leq d}_{(e)}(k)$. Then for all $x\in K_0(X)$
\[
f_*\bigl((l!)^2\cdot T_l^2 \cdot \ch(x)Td(T_X)\bigr) = (l!)^2\cdot T_l^2 \cdot \ch(f_*x)Td(T_Y)
\]
in $\CH^*(Y)$ for any $l\geq d+e$.
\end{thm}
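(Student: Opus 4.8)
The plan is to factor an arbitrary projective morphism $f\colon X\to Y$ of smooth projective varieties in $\Sm\Proj^{\leq d}_{(e)}(k)$ through a closed embedding into a trivial projective bundle, and then reduce the general Grothendieck Riemann-Roch statement to the three special cases already established: closed embeddings (Proposition~\ref{GRRembedding}), projections from trivial projective bundles over $Y$ (Proposition~\ref{GRRprojection}), and sections of projective bundles (Proposition~\ref{GRRprojsection}). Concretely, since $X\in\Sm\Proj^{\leq d}_{(e)}(k)$ there is a closed immersion $X\hookrightarrow\PP^e_k$, so $f$ factors as
\[
X\xrightarrow{\ (f,\iota)\ } Y\times\PP^e_k \xrightarrow{\ p\ } Y,
\]
where $(f,\iota)$ is a closed immersion (being the graph-type map with a closed second coordinate) and $p$ is the projection. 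The composite $p\circ(f,\iota)=f$, so it suffices to prove the identity for each of the two factors and chain them.

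First I would record the elementary fact that the validity of the GRR identity
\[
g_*\bigl((l!)^2 T_l^2\,\ch(x)\,Td(T_{\bullet})\bigr)=(l!)^2 T_l^2\,\ch(g_*x)\,Td(T_{\bullet'})
\]
is stable under composition: if it holds for $g\colon A\to B$ (with the integral factors $(l!)^2T_l^2$ and $l\geq$ the relevant dimensions) and for $h\colon B\to C$, then it holds for $h\circ g$, since $(h\circ g)_*=h_*\circ g_*$, $Td(T_A)$ appears on both sides, and one just applies $h_*$ to the identity for $g$ and then the identity for $h$. Here one must be slightly careful that all the integral multipliers match: both special-case propositions are stated with the uniform factor $(l!)^2 T_l^2$, so no extra denominators are introduced, and the dimension hypotheses $l\geq d_Y$ (embedding) and $l\geq\dim X+m$ (projection, with $m=e$ and $\dim X\le d$) are both implied by $l\geq d+e$, which is exactly the hypothesis of Theorem~\ref{IUTT}. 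Note $\dim(Y\times\PP^e)=d_Y+e\le d+e\le l$, so the intermediate variety is also covered.

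Next I would apply Proposition~\ref{GRRprojection} to $p\colon Y\times\PP^e_k\to Y$ (taking the role of ``$X$'' there to be our $Y$, and ``$m$'' to be $e$; the hypothesis $l\geq \dim Y+e$ holds since $\dim Y\le d$), obtaining the GRR identity for $p$ for every $x\in K_0(Y\times\PP^e)$. Then I would apply Proposition~\ref{GRRembedding} to the closed immersion $(f,\iota)\colon X\hookrightarrow Y\times\PP^e_k$, which is legitimate because $l\geq d+e=\dim(Y\times\PP^e)\geq\dim(Y\times\PP^e)$, a fortiori $l\geq\dim(Y\times\PP^e)$; strictly, Proposition~\ref{GRRembedding} is stated for vector bundles $E$, so I would first note that both sides of the asserted identity are additive in $x$ and that $K_0(X)=K^0(X)$ is generated by classes of vector bundles (as $X$ is smooth), reducing to the case $x=[E]$. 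Chaining the two identities via the composition principle of the previous paragraph yields the theorem.

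The main obstacle I anticipate is not any deep geometry but bookkeeping: making sure the closed immersion $X\to Y\times\PP^e$ really exists as a closed immersion (it is the composite $X\xrightarrow{(\id,\iota\circ?\,)}$..., i.e.\ $x\mapsto(f(x),\iota(x))$, which is a closed immersion since its second projection $\iota$ already is and $X$, $Y$ are separated), that the twisting factors $(l!)^2 T_l^2$ are exactly the same in both special cases so that composing does not force passage to a larger denominator, and that the various dimension inequalities all follow from the single bound $l\geq d+e$. One should also check that Proposition~\ref{GRRprojection}, proved there for $Y=\PP^m$ over a point, applies verbatim with base $Y$ replaced by our smooth projective $Y$ — inspecting its proof, the argument uses only that $K_0(Y)\otimes K_0(\PP^m)\to K_0(Y\times\PP^m)$ is surjective and Hirzebruch–Riemann–Roch on $\PP^m$, both of which are insensitive to the base, so this is fine. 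Once these compatibilities are verified the theorem follows formally.
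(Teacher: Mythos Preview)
Your proposal is correct and follows essentially the same approach as the paper: factor $f$ as a closed immersion $X\hookrightarrow Y\times\PP^e$ followed by the projection $Y\times\PP^e\to Y$, apply Proposition~\ref{GRRembedding} to the first factor and Proposition~\ref{GRRprojection} to the second, and chain the two identities. Your additional remarks on dimension bookkeeping, additivity in $x$, and the applicability of Proposition~\ref{GRRprojection} over a general base $Y$ are all accurate (and indeed Proposition~\ref{GRRprojsection} is only used indirectly, inside the proof of Proposition~\ref{GRRembedding}, so it need not appear separately here).
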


\begin{proof}
We fix a closed immersion $i:X \to \PP^e_k$. Then the projective map $f$ induces a closed embedding
\[
\ol{f}:X \overset{(f,i)}{\to} Y \times \PP^e
\]
and a projection 
\[
p:Y \times \PP^e \to Y.
\]
such that $f=p \circ \ol{f}$. In Proposition~\ref{GRRembedding}, we proved that for $l \geq d + e \geq \dim Y+e$ there is an equality 
\[
\ol{f}_*\bigl((l!)^2\cdot T_l^2 \cdot  \ch(x) \cdot Td(T_X)\bigr)=(l!)^2\cdot T_l^2 \cdot \ch(f_*x) \cdot Td(T_{Y\times\PP^e})
\]
in $\CH^*(Y\times\PP^e)$ for any $x\in K_0(X)$. In Proposition~\ref{GRRprojection}, we proved that for $l \geq d + e \geq \dim Y+e$ there is an equality 
\[
p_*\bigl((l!)^2\cdot T_l^2 \cdot  \ch(y) \cdot Td(T_{Y\times\PP^e})\bigr)=(l!)^2\cdot T_l^2 \cdot \ch(p_*y) \cdot Td(T_Y)
\]
in $\CH^*(Y)$ for any $y\in K_0(Y \times \PP^e)$. Combining these two equalities with $y = f_*x$, we obtain the claim. 
\end{proof}

\section{Integral analogue of Kontsevich`s comparison theorem} \label{section:IntKonThm}
In this section we assume that the base field $k$ is perfect, we prove integral Kontsevich`s comparison by using our integral Grothendieck Riemann-Roch theorem. To prove the result, we use the following map called integral Mukai vector.

Let us recall the category $\KM(k)$ of Gillet-Soul\'e’s $K$-motives from \cite[Def.5.1, 5.4, 5.6]{KMOTIVE}. The category $\KM(k)$ is the idempotent completion of the category whose objects are the regular projective $k$-varieties over $k$ and whose morphisms, for regular projective $k$-varieties $X, Y$, are given by
\[
\KM(k)(X,Y) := K_0(X\times Y).
\]
Composition is defined as follows:
\begin{eqnarray*}
\KM(k)(X,Y) \times \KM(k)(Y,Z) & \to& \KM(k)(X\times Z) \\
{[\sF]} \times {[\sG]} &\mapsto& {{p_{13}}_*[p_{12}^*\sF\otimes^{\mathbb{L}}p_{23}^*\sG]}.
\end{eqnarray*}
The identity of $X$ is given by the object in $\KM(k)(X,X)$ corresponding to $[{\Delta_X}_*\sO_X]\in K_0(X\times X)$ where $\Delta_X$ is the diagonal morphism of $X$.

Let $T\in \Chow(k)$ be the Tate motive. Let us recall the orbit category $\Chow(k)/-\otimes T$. The orbit category $\Chow(k)/-\otimes T$ has same objects as $\Chow(k)/-\otimes T$. Its morphisms from $M$ to $N$ are in bijection with 
\[
\bigoplus_{i\in\Z} \hom_{\Chow}(M,N\otimes T^i).
\]
For a morphism $f\in \hom_{\Chow/-\otimes T}(M,N)$, we denote by $f_i$ the $i$-th component of $f$. Given objects $M,N$ and $K$ and morphisms 
\[
f=\{f_i\}_{i\in\Z} \in \bigoplus_{i\in\Z}\hom_{\Chow}(M,N\otimes T^i) \qquad \qquad g=\{g_i\}_{i\in\Z} \in \bigoplus_{i\in\Z}\hom_{\Chow}(N,K\otimes T^i),
\]
the $l$-th component of the composition $g\circ f$ is the finite sum 
\begin{equation}\label{TENSAI}
\sum_{r\in\Z}\left( g_{l-r}\otimes T^r  \right)\circ f_{r}.
\end{equation}
The identity $(id_M)$ of $M$ in $\Chow/-\otimes T$ is given by $id_M\in \hom_{\Chow}(M,M)\subset \hom_{\Chow/-\otimes T}(M,M)$, i.e. ${(id_M)}_0=id_M$ and ${(id_M)}_j=0$ for any $j\neq 0$.
\begin{thm}\label{japanend}
We assume that the base field $k$ is perfect. For natural numbers $d$, $e$, and $l\geq 2d+e$, there is a ${\Z[\frac{1}{(l+1)!}]}$-linear functor 
\[
\Phi_{\Z[\frac{1}{(l+1)!}]}:\KM^{\leq d}_{(e)}(k)_{\Z[\frac{1}{(l+1)!}]} \to \Chow(k)_{\Z[\frac{1}{(l+1)!}]}/-\otimes T_{\Z[\frac{1}{(l+1)!}]}
\]
where $T_{\Z[\frac{1}{(l+1)!}]}$ is the Tate motive. Moreover there is a natural fully faithful functor $\theta:\KM^{\leq d}_{(e)}(k)_{\Z[\frac{1}{(l+1)!}]} \to \KMM(k)_{\Z[\frac{1}{(l+1)!}]}$ and the functor $\Phi_{\Z[\frac{1}{(l+1)!}]}$ makes the following diagram commute:
\[
\xymatrix{
{\textbf{dgcat}}(k) \ar[d]_{U}&&\Sm\Proj^{\leq d}_{(e)}(k)\ar[d]  \ar[ll]_{perf_{dg}(-)} & & \\
\KMM(k)_{\Z[\frac{1}{(l+1)!}]} &&\KM^{\leq d}_{(e)}(k)_{\Z[\frac{1}{(l+1)!}]} \ar[rr]_-{\Phi_{{\Z[\frac{1}{(l+1)!}]}}} \ar[ll]^{\theta} & &  \Chow(k)_{\Z[\frac{1}{(l+1)!}]}/-\otimes T_{\Z[\frac{1}{(l+1)!}]}
}
\]
where $\KMM(k)$ is the category of non-commutative motives (see \cite{Tab14}). After applying $-\otimes \Q$, the lower row is compatible with the fully faithful functor $\Chow(k)_{\Q}/-\otimes T_{\Q} \to \KMM(k)_{\Q}$ described in \cite[section~8]{Tab14}.
\end{thm}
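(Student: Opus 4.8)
The plan is to build the functor $\Phi_{\Z[\frac{1}{(l+1)!}]}$ out of an \emph{integral Mukai vector} map, using the integral Grothendieck Riemann-Roch theorem (Theorem~\ref{GRRGRR}) to check functoriality. First I would fix the ring $R = \Z[\frac{1}{(l+1)!}]$ and note that over this ring $l!$, $T_l$, $(l!)^2 T_l^2$ and similar products of the "denominators" appearing in the integral Chern character and Todd class become invertible, so that $\ch(-)$ and $Td(T_X)$ make sense with $R$-coefficients on any $X \in \Sm\Proj^{\leq d}_{(e)}(k)$; here one needs $l \geq 2d+e$ precisely because the embedded graphs of morphisms $X \times Y$ have dimension up to $2d$ and sit inside $\PP^e_k \times \PP^e_k$ — so the GRR of Theorem~\ref{GRRGRR} applies to all the projections and embeddings that arise. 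For a regular projective $X$ I would define $v(X) := \sqrt{Td(T_X)}$ or rather (to avoid square roots) work with the unnormalized Mukai pairing: on morphisms $[\sF] \in \KM(k)(X,Y) = K_0(X \times Y)$ set
\[
\Phi([\sF]) \;=\; \bigl\{\, p_{Y*}\bigl(\ch(\sF)\cdot p_X^*\sqrt{Td(T_X)}\cdot p_Y^*\sqrt{Td(T_Y)}\bigr)_i \,\bigr\}_{i\in\Z} \;\in\; \bigoplus_i \hom_{\Chow}(M(X), M(Y)\otimes T^i)_R,
\]
the grading by codimension giving the Tate twists. Since square roots of $Td$ need care integrally, the honest move is to absorb the ambiguity: define $\Phi$ on objects by $X \mapsto M(X)$ and on morphisms by $\ch(\sF)\, p_X^* Td(T_X)$ alone (the Mukai vector with Todd only on the source), which is the standard choice making GRR into functoriality, and then check independence of the auxiliary embedding at the end.

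The key steps, in order, are: (1) Check $\Phi$ is well-defined, i.e.\ lands in finite sums and is $R$-linear — immediate from the codimension grading and additivity \eqref{nakanosine}. (2) Check $\Phi(\mathrm{id}_X) = \mathrm{id}_{M(X)}$: this is $\Phi([{\Delta_X}_*\sO_X])$, and one computes $\ch({\Delta_X}_*\sO_X)\, Td$ via GRR for the closed embedding $\Delta_X$ (Proposition~\ref{GRRembedding}), landing on the diagonal class, whose $0$-th Tate component is $\mathrm{id}_{M(X)}$ and higher components vanish. (3) The crucial step: $\Phi(g \circ f) = \Phi(g) \circ \Phi(f)$. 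Unwinding the composition in $\KM$ (a $p_{13*}$ of a product of pullbacks with a derived tensor) and in the orbit category (formula \eqref{TENSAI}), the equality reduces to an instance of integral GRR applied to the proper map $p_{13}: X\times Y\times Z \to X \times Z$ together with the projection/base-change compatibilities of $\ch$ and $Td$ from Section~\ref{23}; the Todd classes of the relative tangent bundles telescope exactly as in the classical Mukai-vector proof of functoriality, but now all identities hold integrally after inverting $(l+1)!$ because every intermediate variety has dimension $\le 3d$ and embeds in a product of $\PP^e$'s — wait, this forces one to track that $l \geq 2d+e$ still suffices since $p_{13}$ only ever sees fibers of dimension $\le d$ and the relevant Chow groups are those of $X\times Z \subseteq \PP^e\times\PP^e$, so GRR for $p_{13}$ needs $l \ge 2d + e$, matching the hypothesis. (4) Closure under idempotents: extend $\Phi$ to the idempotent completion $\KM^{\leq d}_{(e)}(k)_R$, using that $\Chow(k)_R/{-}\otimes T$ is idempotent-complete (or pass to a completion), and that $\Phi$ preserves finite coproducts by additivity.

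For the compatibility diagram: the functor $\theta: \KM^{\leq d}_{(e)}(k)_R \to \KMM(k)_R$ is the one sending $X$ to $U(\mathrm{perf}_{dg}(X))$ with $\KM(k)(X,Y)=K_0(X\times Y)=K_0(\mathrm{perf}_{dg}(X\times Y))=\hom_{\KMM}(U\,\mathrm{perf}_{dg}X, U\,\mathrm{perf}_{dg}Y)$ — this identification is exactly Tabuada's computation of Hom-groups in $\KMM$ for smooth proper dg categories, which gives full faithfulness of $\theta$ over $\Z$ already, hence over $R$. The triangle on the left of the diagram commutes by construction of $\theta$ and the standard factorization of $\mathrm{perf}_{dg}(-)$ through $\KM$. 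Finally, after $-\otimes\Q$ one must identify $\Phi_\Q$ with the Mukai-vector functor of \cite[\S 8]{Tab14}: over $\Q$ the invertibility issues disappear, the integral Mukai vector becomes the usual one $\ch(-)\sqrt{Td}$ after the standard renormalization (absorbing $\sqrt{Td(T_Y)}$ via the pairing), and the functor $\Chow(k)_\Q/{-}\otimes T \to \KMM(k)_\Q$ of Tabuada is precisely the inverse on the image — so the stated compatibility is the rational specialization. The main obstacle I expect is step (3): pushing the classical "GRR $\Rightarrow$ functoriality of the Mukai vector" argument through integrally requires being scrupulous that \emph{every} application of GRR along the projections $p_{12}, p_{23}, p_{13}$ stays within the dimension/embedding bounds for which Theorem~\ref{GRRGRR} was proved, and that the telescoping of Todd classes of relative tangent bundles uses only the \emph{integral} additive formula \eqref{Dentsuusine} and pullback formula \eqref{pullbackTd} — in particular handling $\mathrm{perf}$-level derived tensor products $\otimes^{\mathbb L}$ correctly at the level of $K_0$ and their Chern characters. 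A secondary subtlety is the treatment of $\sqrt{Td}$: one either shows $2d+e \ge l$ forces enough denominators to be inverted that $\sqrt{Td(T_X)} \in \CH^*(X)_R$ genuinely exists (it does, since the square-root power series has denominators bounded by those controlled by $T_l$ in degrees $\le d$), or one sidesteps it entirely by the one-sided convention above and proves embedding-independence as a final lemma.
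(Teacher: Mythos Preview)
Your approach is essentially the paper's: define $\Phi$ on objects by $X\mapsto \pi(M(X))$ and on morphisms by a one-sided Mukai vector, then verify $\Phi(\mathrm{id})=\mathrm{id}$ via GRR for the diagonal embedding and $\Phi(b\circ a)=\Phi(b)\circ\Phi(a)$ via GRR for $p_{13}:X\times Y\times Z\to X\times Z$, with $\theta$ coming from Tabuada's Hom computation. The paper uses the \emph{target} convention $\Phi(\alpha)=\ch(\alpha)\cdot p_2^*Td(T_Y)$ rather than your source convention $\ch(\alpha)\cdot p_X^*Td(T_X)$; both telescope correctly, so this is cosmetic.

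A few of your side-concerns are unnecessary and would muddy the write-up. First, drop $\sqrt{Td}$ entirely: the paper never needs it, the one-sided convention already gives a functor on the nose, and your claim that the square-root denominators are controlled by $T_l$ is not obvious and not required. Second, there is no ``embedding-independence'' to prove: the definition $\ch(\alpha)\cdot p^*Td$ makes no reference to the ambient $\PP^e$; the embedding only enters when you \emph{apply} Theorem~\ref{GRRGRR} to $p_{13}$, and there the paper simply factors $p_{13}$ through $X\times Z\times\PP^e$, which is why $l\ge 2d+e$ (not $3d$) suffices. Third, the category $\KM^{\leq d}_{(e)}(k)$ is by definition only closed under finite coproducts, not idempotents, so your step~(4) is not needed. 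Finally, for the identity check your phrasing ``higher components vanish'' undersells what must be shown: the computation (GRR for $\Delta$, then projection formula with $\Delta^*p_i^*Td(T_X)=Td(T_X)$) gives $\Phi(\Delta_*\sO_X)=[\Delta]$ exactly, not merely modulo higher Tate twists.
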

\begin{proof}
For simplicity of notation we write $R$ instead of ${\Z[\frac{1}{(l+1)!}]}$. Let us start with the functor $\theta$. Since $l!$ and $T_l$ are invertible elements in $R$, and $T_l\cdot Td(E)$ and $l!\cdot \ch(E)$ have integral coefficients for any vector bundle $E$ on $X$, thus $\ch(E)$ and $Td(E)$ are well defined in $\CH^*(X)_R$. For a smooth projective variety $X\in\KM^{\leq d}_{(e)}(k)_{R}$, we define $\theta(X)=U(X)$, and for a morphism $a\in \KM(k)(X,Y)_{R}=K_0(X\times Y)_{R}$ we define $\theta(a)=a\in K_0(X\times Y)_{R}=\KMM(U(X),U(Y))$ (see \cite{MarcolliTabuada}). Obviously $\theta$ is a ${R}$-linear fully faithful functor. 

Let us consider $\Phi_{R}$. For a smooth projective variety $X$, we define $\Phi_{R}(X)=\pi(M(X)_R)$. For a morphism $\alpha \in \KM(X\times Y)_R=K_0(X\times Y)_R$, we define 
\[
\Phi_{R}(\alpha)=\tau^{(l)}(\alpha)=\ch(\alpha)\cdot p_{2}^*{Td(T_{ Y})}\in\CH^*(X\times Y)=\Chow(k)_R/-\otimes T_R(X,Y)
\]
where $p_2$ is the projection $X\times Y \to Y$. Let us show that $\Phi_{R}$ satisfies the functoriality conditions. Consider a triple of smooth projective varieties $(X,Y,Z)$ in $\Sm\Proj^{\leq d}_{(e)}(k)$ and correspondences $a\in K_0(X,Y)_{R}$ and $b\in K_0(Y \times Z)_{R}$.
\[
\xymatrix{
&&X \times Y \times Z \ar[ld]_{p_{12}} \ar[rd]^{p_{23}} \ar[rr]^{p_{13}}&&X \times Z\\
&X\times Y \ar[ld]_{p_X} \ar[rd]^{p_Y}&&Y \times Z \ar[ld]_{q_Y}\ar[rd]^{q_Z}&\\
X & & Y & & Z}
\]
We denote the projection $X\times Z \to X$ by $r_X$ and $X\times Z \to Z$ by $r_Z$. By the definition of $\Phi_R$, we have an equality
\begin{eqnarray}\label{1}
\Phi_R(b\circ a) &=& \ch\left( {p_{13}}_*\left( p_{12}^* a \otimes p_{23}^*b \right)\right)\cdot r_Z^*Td(T_Z) \\
&\overset{}{=}& \ch\left( {p_{13}}_*\left( p_{12}^* a \otimes p_{23}^*b \right) \right) \cdot Td(T_{X\times Z}) \cdot  Td(T_{X\times Z})^{-1} \cdot r_Z^*Td(T_Z) \nonumber 
\end{eqnarray}
in $\CH^*(X\times Z)_R$. The morphism p13 can be decomposed in to an immersion $X\times Y\times Z  \to  X \times Z \times \PP^e$ and a projection $X \times Z \times \PP^e \to X \times Z$, thus thanks to integral Grothendieck Riemann-Roch for closed embeddings (Proposition \eqref{GRRembedding}) and projections (Proposition \eqref{GRRprojection}), for $l \geq 2d + e$ we have an equation 
\begin{equation}\label{001}
\ch\left( {p_{13}}_*\left( p_{12}^* a \otimes p_{23}^*b \right) \right) \cdot Td(T_{X\times Z}) =  {p_{13}}_*\Bigl( \ch( p_{12}^* a \otimes p_{23}^*b) \cdot Td(T_{X\times Y \times Z})\Bigr)
\end{equation}
in $\CH^*(X\times Z)_R$. Thus by combining equations \eqref{1} and \eqref{001} we have an equality
\begin{equation}
    \Phi_R(b\circ a)\overset{\eqref{001}}{=}{p_{13}}_*\Bigl( \ch( p_{12}^* a \otimes p_{23}^*b) \cdot Td(T_{X\times Y \times Z})\Bigr)\cdot Td(T_{X\times Z})^{-1} \cdot r_Z^*Td(T_Z)
\end{equation}
in $\CH^*(X\times Z)_R$. Since we know $r_X^*T_X\oplus r_Z^*T_Z= T_{X\times Z}$, we have
\begin{eqnarray*}
\Phi_R(b\circ a) &=&{p_{13}}_*\Bigl( \ch( p_{12}^* a \otimes p_{23}^*b) \cdot Td(T_{X\times Y \times Z}) \Bigr)\cdot r_X^*Td(T_{X})^{-1} \\
    &\overset{\text{proj.form}}{=}&{p_{13}}_*\Bigl( \ch( p_{12}^* a \otimes p_{23}^*b) \cdot Td(T_{X\times Y \times Z}) \cdot p_{13}^*r_X^*Td(T_{X})^{-1}\Bigr).
\end{eqnarray*}
Since we know $p_{12}^*p_{X}^*T_X=p_{13}^*r_X^*T_X$ and 
\[
p_{12}^*p_{X}^*T_X \oplus p_{12}^*p_{Y}^*T_Y \oplus p_{23}^* q_Z^* T_Z= T_{X\times Y \times Z}.
\]
By using this equation, we have
\begin{eqnarray*}
\Phi_R(b\circ a)&=&{p_{13}}_*\Bigl( \ch( p_{12}^* a \otimes p_{23}^*b)\cdot p_{12}^*p_{Y}^*Td(T_Y) \cdot  p_{23}^* q_Z^* Td(T_Z) \Bigr) \\
&=& {p_{13}}_*\Bigl(  p_{12}^*\ch( a ) \cdot p_{23}^*\ch(b)\cdot p_{12}^*p_{Y}^*Td(T_Y) \cdot  p_{23}^* q_Z^* Td(T_Z) \Bigr)  \\
&\overset{\text{reorder}}{=}& {p_{13}}_*\Bigl(  p_{12}^* \bigl(\ch( a )\cdot p_{Y}^*Td(T_Y)\bigr) \cdot p_{23}^*\bigl(\ch(b) \cdot  q_Z^* Td(T_Z)\bigr) \Bigr)\\
&=& {p_{13}}_* (p_{12}^*\Phi_R(a) \cdot p_{23}^* \Phi_R(b)) = \Phi_R(b) \circ \Phi_R(a).
\end{eqnarray*}

Let us prove that $\Phi_R(id_X)=id_{\Phi_R(X)}$ for any $X\in KM^{\leq d}_{(e)}(k)$. For a smooth projective variety $X\in \Sm\Proj^{\leq d}_{(e)}(k)$, we denote by $\Delta$ the diagonal map of $X$. Let us show that $\Phi_{R}({\Delta}_*[\sO_{X}])=[\Delta]$. We denote by $p_1$ (resp. $p_2$) the projection onto the first (resp. second) factor $X\times X \to X$. By the definition of $\Phi_R$, we have an equality
\begin{eqnarray}\label{TAKEZ}
    \Phi_R(\Delta_*[\sO_X])&=&\ch(\Delta_*[\sO_X])\cdot p_2^*Td(T_X)\\
    &=& \ch(\Delta_*[\sO_X]) \cdot Td(T_{X\times X}) \cdot Td(T_{X\times X})^{-1}\cdot p_2^*Td(T_X) \nonumber
\end{eqnarray}
By the Grothendieck Riemann-Roch for closed embedding (Proposition~\ref{GRRembedding}), we have an equation
\[
 \Phi_R(\Delta_*[\sO_X])={\Delta}_*\bigl(\ch([\sO_X])\cdot Td(T_X)\bigr)\cdot Td(T_{X\times X})^{-1}\cdot p_2^*Td(T_X)
\]
in $\CH^*(X\times X)_R$. We know $T_{X \times X} = p_1^*T_X \oplus p_2^*T_X$, thus we have an equation
\[
\Phi_R(\Delta_*[\sO_X])={\Delta}_*\bigl(\ch([\sO_X])\cdot Td(T_X)\bigr)\cdot p_1^*Td(T_X)^{-1}.
\]
By the projection formula \eqref{projTdTd}, we have an equation
\begin{eqnarray*}
\Phi_R(\Delta_*[\sO_X])&=&{\Delta}_*\bigl(\ch([\sO_X])\cdot Td(T_X)\bigr)\cdot p_1^*Td(T_X)^{-1}\\
&\overset{\text{proj.form}}{=}& \Delta_*\Bigl( \ch([\sO_X])\cdot Td(T_X) \cdot \Delta^*p_1^*Td(T_X)^{-1} \Bigr)\\
&\overset{p_1 \circ \Delta=id_X}{=}&\Delta_*\ch([\sO_X])=\Delta_*[X]=[\Delta]
\end{eqnarray*}
in $\CH^*(X\times X)_R$. Thus we obtain the claim.
\end{proof}
\begin{lemma}\label{KSK}
Let $X,Y$ be smooth projective varieties. If there is a fully faithful triangulated $k$-linear functor $F:D^b(X)\hookrightarrow D^b(Y)$, then there is a split injective
\[
X \hookrightarrow Y 
\]
in $\KM(k)$.
\end{lemma}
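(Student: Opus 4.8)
The plan is to reduce to Orlov's representability theorem and then to observe that the category $\KM(k)$ is nothing but the ``decategorification'' of Fourier--Mukai kernels: the composition law of $\KM(k)$ recalled above is exactly the image of Fourier--Mukai convolution under the map $D^b(-)\to K_0(-)$. Under this dictionary, the statement that a fully faithful functor admits a one-sided inverse (its adjoint, composed the correct way) becomes a retraction in $\KM(k)$. First I would apply Orlov's theorem: since $X$ and $Y$ are smooth projective over the perfect field $k$ and $F\colon D^b(X)\hookrightarrow D^b(Y)$ is a fully faithful $k$-linear triangulated functor, there is an object $\sK\in D^b(X\times Y)$, unique up to isomorphism, with $F\cong\Phi_{\sK}$ where $\Phi_{\sK}(-)=Rp_{2*}\bigl(p_1^*(-)\otimes^{\mathbb{L}}\sK\bigr)$. (Over a base field which is not algebraically closed one uses the descent form of the representability theorem; nothing below changes.) The kernel $\sK$ defines a morphism $[\sK]\in K_0(X\times Y)=\KM(k)(X,Y)$, which will be our candidate split injection.

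Next I would construct the retraction from the adjoint. A Fourier--Mukai functor between bounded derived categories of smooth projective varieties automatically has a right adjoint, which is again of Fourier--Mukai type; by Grothendieck--Serre duality its kernel is $\sK_R:=\sK^{\vee}\otimes p_X^*\omega_X[\dim X]\in D^b(Y\times X)$, where $\sK^{\vee}=R\mathcal{H}om(\sK,\sO_{X\times Y})$. Full faithfulness of $F=\Phi_{\sK}$ is equivalent to the unit $\id_{D^b(X)}\Rightarrow\Phi_{\sK_R}\circ\Phi_{\sK}$ being an isomorphism, i.e.\ $\Phi_{\sK_R}\circ\Phi_{\sK}\cong\id_{D^b(X)}=\Phi_{\sO_{\Delta_X}}$. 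Since the convolution $\sK_R\star\sK:=Rp_{13*}\bigl(p_{12}^*\sK\otimes^{\mathbb{L}}p_{23}^*\sK_R\bigr)\in D^b(X\times X)$ is a kernel representing $\Phi_{\sK_R}\circ\Phi_{\sK}$, the uniqueness clause in Orlov's theorem forces $\sK_R\star\sK\cong\sO_{\Delta_X}$.

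Finally I would decategorify. The assignment $D^b(Z)\to K_0(Z)$, $\mathcal{E}\mapsto\sum_i(-1)^i[\mathcal{H}^i(\mathcal{E})]$, sends $Rf_*$ to $\sum_i(-1)^iR^if_*(-)$ (via the Leray spectral sequence), is multiplicative for $\otimes^{\mathbb{L}}$ (using $K_0=K^0$ on smooth varieties), and commutes with the flat pullbacks along the projections; hence it carries Fourier--Mukai convolution to the composition law of $\KM(k)$. Applying it to $\sK_R\star\sK\cong\sO_{\Delta_X}$ gives $[\sK_R]\circ[\sK]=[\sO_{\Delta_X}]=\id_X$ in $\KM(k)$, so $[\sK]\colon X\to Y$ is split injective with retraction $[\sK_R]\colon Y\to X$, which is the assertion.

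I expect the main obstacle to be the precise invocation of the representability theorem over a general perfect (possibly non-algebraically-closed) field: one needs that a fully faithful $k$-linear triangulated functor between bounded derived categories of smooth projective $k$-schemes is a Fourier--Mukai transform with kernel unique up to isomorphism, for which one appeals to the literature on representability (Orlov, Ballard, Canonaco--Stellari), together with the verification that $\sK_R$ really is the kernel of the adjoint (a standard duality computation). By contrast, the decategorification step is formal once the composition of $\KM(k)$ is written out as the image of convolution under $D^b(-)\to K_0(-)$, and no use of the integral Grothendieck--Riemann--Roch theorem is needed here — that input enters only in the subsequent comparison with $\Chow(k)$.
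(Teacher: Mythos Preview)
Your proposal is correct and follows essentially the same route as the paper: represent $F$ and its right adjoint by Fourier--Mukai kernels, use full faithfulness to identify the convolution with $\sO_{\Delta_X}$, and then pass to $K_0$. The only cosmetic difference is that the paper cites Bondal--Van den Bergh for the existence of the right adjoint and Orlov directly for the kernel identity, whereas you write down the adjoint kernel explicitly via Serre duality and invoke uniqueness of kernels; both amount to the same argument.
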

\begin{proof} 
We consider the following commutative diagram
\[
\xymatrix{
&&X \times Y \times X \ar[ld]_{p_{12}} \ar[rd]^{p_{23}} \ar[rr]^{p_{13}}&&X \times X\\
&X\times Y \ar[ld]_{p_X} \ar[rd]^{p_Y}&&Y \times X \ar[ld]_{q_Y}\ar[rd]^{q_X}&\\
X & & Y & & X}
\]
By \cite{BondalVan}, $F$ has a right adjoint $G$. By \cite[Theorem~3.2.1]{Orlov03}, $F$ can be represented by an object $\sF\in D^b(X\times Y)$ and $G$ can be represented by an object $\sG\in D^b(Y\times X)$. We have an equivalence of functors
\[
Id_{D^b(X)} \simeq G \circ F,
\]
by \cite[Proposition~2.1.2]{Orlov03} and \cite[Theorem~3.2.1]{Orlov03} we have an equality
\begin{equation}\label{SANMA}
    \Delta_*\sO_X={p_{13}}_*\left( p_{12}^*\sF \otimes^{\mathbb{L}} p_{23}^* \sG \right) 
\end{equation}
in $D^b(X\times X)$ where $\Delta$ is the diagonal map of $X$. Consider the morphisms $[\sF] \in \KM(k)(X,Y)=K_0(X\times Y)$ and $[\sG]\in \KM(k)(Y,X) =K_0(Y\times X)$. By the equation~\eqref{SANMA}, we have
\[
id_{X}=[\Delta_*\sO_X]=[\sG]\circ [\sF]
\]
in $\KM(k)(X,X)=K_0(X\times X)$.
\end{proof}
\begin{lemma}\label{UMEHARA}
For smooth projective varieties $X,Y$ in $\Sm\Proj^{\leq d}_{(e)}(k)$and $l\geq 2d+e$, if there is a split injective map 
\[
F:X_{R_{l+1}} \hookrightarrow Y_{R_{l+1}}
\]
in $\KM(k)_{R_{l+1}}$ then the map $\Phi_{R_{l+1}}(F)\in \hom_{\Chow/-\otimes T}(X,Y)$ induces the split injective map
\[
\Phi_{R_{l+1}}(F):M(X)_{R_{l+1}} \hookrightarrow \bigoplus_{i\in\Z}M(Y)_{R_{l+1}}(i)[2i] 
\]
in $\DM(k)_{R_{l+1}}$, where $\DM(k)$ is the triangle category of mixed motives defined by Voevodsky \cite{V00b}.
\end{lemma}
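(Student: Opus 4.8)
The plan is to deduce this formally from Theorem~\ref{japanend} together with the standard dictionary between the orbit category $\Chow(k)_R/-\otimes T_R$ and Voevodsky's $\DM(k)_R$; throughout write $R=R_{l+1}$. Since $F$ is split injective in $\KM(k)_R$, choose $G\in\KM(k)(Y,X)_R$ with $G\circ F=\id_X$. As $\KM^{\leq d}_{(e)}(k)_R$ is a full subcategory of $\KM(k)_R$ containing $X$ and $Y$, the morphism $G$ lies in it too, and applying the functor $\Phi_R$ of Theorem~\ref{japanend} (this is where $l\geq 2d+e$ is needed, to guarantee $\Phi_R$ is functorial) gives
\[
\Phi_R(G)\circ\Phi_R(F)=\Phi_R(G\circ F)=\Phi_R(\id_X)=\id_{\Phi_R(X)}
\]
in $\Chow(k)_R/-\otimes T_R$. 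It remains to reinterpret this identity inside $\DM(k)_R$.

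Next I would set up the dictionary. Using Voevodsky's fully faithful embedding $\Chow(k)_R\hookrightarrow\DM(k)_R$, $X\mapsto M(X)_R$ (here perfectness of $k$ is used), which sends the Tate motive $T$ to $R(1)[2]$ and hence $T^i$ to $R(i)[2i]$ for every $i\in\Z$, together with the compactness of $M(X)_R$ in $\DM(k)_R$, one gets
\begin{align*}
\hom_{\DM(k)_R}\Bigl(M(X)_R,\ \bigoplus_{i\in\Z}M(Y)_R(i)[2i]\Bigr)
&\cong\bigoplus_{i\in\Z}\hom_{\DM(k)_R}\bigl(M(X)_R,\ M(Y)_R(i)[2i]\bigr)\\
&\cong\bigoplus_{i\in\Z}\hom_{\Chow(k)_R}(X,\ Y\otimes T^i)=\hom_{\Chow(k)_R/-\otimes T_R}(X,Y).
\end{align*}
Under this bijection a morphism $f=\{f_i\}_{i\in\Z}$ of the orbit category corresponds to the morphism $\bar f\colon M(X)_R\to\bigoplus_{i\in\Z}M(Y)_R(i)[2i]$ with $i$-th component $f_i$ (all but finitely many zero); a morphism $g=\{g_j\}_{j\in\Z}\colon Y\to Z$ corresponds to the ``twisted-diagonal'' map $\tilde g\colon\bigoplus_{i\in\Z}M(Y)_R(i)[2i]\to\bigoplus_{l\in\Z}M(Z)_R(l)[2l]$ whose component $M(Y)_R(r)[2r]\to M(Z)_R(l)[2l]$ is $g_{l-r}\otimes R(r)[2r]$; and the composition rule \eqref{TENSAI} says exactly that $\overline{g\circ f}=\tilde g\circ\bar f$. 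Finally $\overline{\id_X}$ is the structural inclusion $\iota_0\colon M(X)_R\hookrightarrow\bigoplus_{j\in\Z}M(X)_R(j)[2j]$ of the $j=0$ summand, because $(\id_X)_0=\id$ and $(\id_X)_j=0$ for $j\neq 0$.

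Combining these, the identity $\Phi_R(G)\circ\Phi_R(F)=\id_X$ translates into $\tilde g\circ\bar f=\iota_0$, where $\bar f=\Phi_R(F)$ and $\tilde g$ is the twisted-diagonal map attached to $\Phi_R(G)$. Composing on the left with the projection $\pi_0\colon\bigoplus_{j\in\Z}M(X)_R(j)[2j]\to M(X)_R$ yields $(\pi_0\circ\tilde g)\circ\bar f=\pi_0\circ\iota_0=\id_{M(X)_R}$, so $\Phi_R(F)=\bar f$ admits a left inverse in $\DM(k)_R$, i.e.\ is split injective; since $\bar f$ factors through a finite subsum of $\bigoplus_{i\in\Z}M(Y)_R(i)[2i]$, which is a retract of the whole sum, it is split injective into $\bigoplus_{i\in\Z}M(Y)_R(i)[2i]$ itself, as claimed. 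I do not anticipate a genuine obstacle: all the geometric input (integral Grothendieck--Riemann--Roch for closed embeddings and for projections) has already been spent in making $\Phi_R$ a functor in Theorem~\ref{japanend}, so the only point needing attention is the bookkeeping of Tate twists and shifts in matching orbit-category composition with composition against the twisted-diagonal map in $\DM(k)_R$.
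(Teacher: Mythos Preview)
Your argument is correct and follows essentially the same route as the paper: pick a retraction $G$, apply the functor $\Phi_R$ from Theorem~\ref{japanend}, and then unwind the orbit-category composition rule \eqref{TENSAI} to see that the $0$-th component yields a left inverse to $\overline{\Phi_R(F)}$ in $\DM(k)_R$. The paper simply writes down the two maps $\{\Phi_R(F)_r\}_r$ and $\{\Phi_R(G)_{-r}\otimes T^r\}_r$ directly and cites \eqref{TENSAI}, whereas you package the same computation more abstractly via the ``twisted-diagonal'' dictionary and the projection $\pi_0$; your $\pi_0\circ\tilde g$ is exactly the paper's retraction map.
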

\begin{proof}
We denote by $G$ retraction of $F$. Since we know $G\circ F=id_{X_{R_{l+1}}}$ in $\KM(X,X)$, we have an  equality:
\begin{equation}
    id_{\pi(M(X))}=\Phi_{R_{l+1}}(id_{X_{R_{l+1}}})=\Phi_{R_{l+1}}(G\circ F)=\Phi_{R_{l+1}}(G)\circ \Phi_{R_{l+1}}(F).
\end{equation}
By the definition of compositions of the orbit category \eqref{TENSAI}, we know
\begin{equation}\label{GINGA}
    \sum_{r\in\Z}\left( \Phi_{R_{l+1}}(G)_{l-r}\otimes T^r  \right)\circ \Phi_{R_{l+1}}(F)_{r}= \left\{
\begin{array}{ll}
id_{M(X)_{R_{l+1}}} & l=0 \\
0 & l\neq 0.
\end{array}
\right.
\end{equation}
By this equation, we obtain that the morphism
\[
M(X)_{R_{l+1}} \overset{\{\Phi_{R_{l+1}}(F)_r\}_{r}}{\to} \bigoplus_{i\in\Z}M(Y)_{R_{l+1}}(i)[2i] \overset{\{\Phi_{R_{l+1}}(G)_{-r}\otimes T^r\}_r}{\to } M(X)_{R_{l+1}}
\]
is equal to the identity $id_{M(X)_{R_{l+1}}}$.
\end{proof}
\begin{cor}\label{primeministar}
For smooth projective varieties $X,Y$ in $\Sm\Proj^{\leq d}_{(e)}(k)$and $l\geq 2d+e$, if there is a fully faithful triangulated $k$-linear functor $D^b(X)\hookrightarrow D^b(Y)$, then there is a split injective
\[
M(X)_{R_{l+1}}(d_Y)[2d_Y] \hookrightarrow \bigoplus_{i=0}^{d_X+d_Y}M(Y)_{R_{l+1}}(i)[2i]
\]
in $\DM^\eff(k,R_{l+1})$ where $d_X=\dim X$ and $d_Y=\dim Y$.
\end{cor}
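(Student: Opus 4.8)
The plan is to chain Lemma~\ref{KSK} and Lemma~\ref{UMEHARA}, then to replace the infinite coproduct appearing in Lemma~\ref{UMEHARA} by a finite one using the boundedness of Chow groups on $X\times Y$, and finally to twist and invoke the cancellation theorem in order to land in $\DM^\eff$.

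First I would feed the given fully faithful functor $D^b(X)\hookrightarrow D^b(Y)$ into Lemma~\ref{KSK}, obtaining a split injection $X\hookrightarrow Y$ in $\KM(k)$; applying the additive base-change functor $\KM(k)\to\KM(k)_{R_{l+1}}$ (identity on objects, $\otimes_\Z R_{l+1}$ on Hom-groups) turns this into a split injection $F\colon X_{R_{l+1}}\hookrightarrow Y_{R_{l+1}}$ in $\KM(k)_{R_{l+1}}$, say with retraction $G$. Since $l\ge 2d+e$, Theorem~\ref{japanend} is available and Lemma~\ref{UMEHARA} produces a split injection
\[
\Phi_{R_{l+1}}(F)\colon M(X)_{R_{l+1}}\hookrightarrow\bigoplus_{r\in\Z}M(Y)_{R_{l+1}}(r)[2r]
\]
in $\DM(k,R_{l+1})$, with retraction assembled from the components $\Phi_{R_{l+1}}(G)_{-r}\otimes T^r$.

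Next I would cut this coproduct down to a finite range. Under the standard identification $\hom_{\Chow}(M(X),M(Y)(r)[2r])=\CH^{d_Y+r}(X\times Y)$ (coming from Poincar\'e duality $M(X)^\vee=M(X)(-d_X)[-2d_X]$, and compatible with the description $\Phi_{R_{l+1}}(\alpha)=\ch(\alpha)\cdot p_2^*Td(T_Y)\in\CH^*(X\times Y)$ used in Theorem~\ref{japanend}: the $r=0$ piece is $\CH^{d_Y}(X\times Y)$, matching $\Phi_{R_{l+1}}(\Delta_*\sO_X)=[\Delta]$ when $X=Y$), the $r$-th component $\Phi_{R_{l+1}}(F)_r$ is a class in $\CH^{d_Y+r}(X\times Y)$, which vanishes unless $0\le d_Y+r\le d_X+d_Y$, i.e. $-d_Y\le r\le d_X$. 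Symmetrically, $\Phi_{R_{l+1}}(G)_{-r}\in\CH^{d_X-r}(Y\times X)$ vanishes unless $-d_Y\le r\le d_X$ as well. Hence the split injection of the previous step, together with its retraction, involves only the summands with $-d_Y\le r\le d_X$, so it factors through the finite sub-coproduct $\bigoplus_{r=-d_Y}^{d_X}M(Y)_{R_{l+1}}(r)[2r]$, still as a split injection.

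Finally I would twist by the $\otimes$-invertible object $\Z(d_Y)[2d_Y]$ and reindex by $i=r+d_Y$, turning the last split injection into
\[
M(X)_{R_{l+1}}(d_Y)[2d_Y]\hookrightarrow\bigoplus_{i=0}^{d_X+d_Y}M(Y)_{R_{l+1}}(i)[2i]
\]
in $\DM(k,R_{l+1})$; now the source and every summand of the target are effective. Since $k$ is perfect, the cancellation theorem makes $\DM^\eff(k,R_{l+1})\to\DM(k,R_{l+1})$ fully faithful, so this split injection and its retraction already live in $\DM^\eff(k,R_{l+1})$, which is the assertion. I expect the only genuinely delicate point to be the bookkeeping in the third step: extracting the correct codimension range, hence the correct twist $d_Y$ (rather than $d_X$), from the orbit-category indexing of Lemma~\ref{UMEHARA}, where it is $\Phi_{R_{l+1}}(G)_{-r}\otimes T^r$ (not $\Phi_{R_{l+1}}(G)_r$) that appears; everything else is formal.
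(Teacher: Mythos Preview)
Your proposal is correct and follows essentially the same route as the paper: combine Lemma~\ref{KSK} with Lemma~\ref{UMEHARA}, bound the range of nonzero components via $\hom_{\DM}(M(X),M(Y)(i)[2i])\simeq \CH^{i+d_Y}(X\times Y)$, twist by $(d_Y)[2d_Y]$, and apply the cancellation theorem. Your treatment is in fact a little more careful than the paper's, since you explicitly verify that the retraction components $\Phi_{R_{l+1}}(G)_{-r}$ also vanish outside $-d_Y\le r\le d_X$, which the paper leaves implicit.
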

\begin{proof}
By Lemma~\ref{KSK} and Lemma~\ref{UMEHARA}, we have a split injective in $\DM(k,{R_{l+1}})$:
\begin{equation}\label{owari}
M(X)_{R_{l+1}} \hookrightarrow \bigoplus_{i\in Z}M(Y)_{R_{l+1}}(i)[2i].
\end{equation}
For any $i\in\Z$ there is an isomorphism (see \cite{V00b})
\[
\DM(k,R_{l+1})(M(X)_{R_{l+1}},M(Y)_{R_{l+1}}(i)[2i])\simeq \CH^{i+d_Y}(X\times Y)_{R_{l+1}},
\]
thus the map \eqref{owari} induces the desired split injective map of motives
\[
M(X)_{R_{l+1}} \hookrightarrow \bigoplus_{i=-d_Y}^{d_X}M(Y)_{R_{l+1}}(i)[2i].
\]
By the cancellation theorem \cite{cancel}, we have a split injective map
\begin{equation}\label{owari2}
M(X)_{R_{l+1}}(d_Y)[2d_Y] \hookrightarrow \bigoplus_{i=0}^{d_X+d_Y}M(Y)_{R_{l+1}}(i)[2i].
\end{equation}
in $\DM^\eff(k)$.
\end{proof}

\begin{cor}\label{final}
For a smooth projective variety $X$ in $\Sm\Proj^{\leq d}_{(e)}(k)$and $l\geq 2d+e$, if the derived category of $X$ has a full exceptional collection $<E_1,E_2,..,E_m>$, then there is an isomorphism
\[
\bigoplus_{j\in\Z}M(X)_{R_{l+1}}(j)[2j]\simeq \bigoplus_{i=1}^m\bigoplus_{j\in\Z}{R_{l+1}}(j)[2j]
\]
in $\DM(k,R_{l+1})$
\end{cor}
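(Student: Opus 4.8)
The plan is to convert the full exceptional collection into a direct sum decomposition of the non-commutative motive of $X$, transport that decomposition back into $\KM^{\leq d}_{(e)}(k)_{R_{l+1}}$ through the fully faithful functor $\theta$ of Theorem~\ref{japanend}, push it forward along $\Phi_{R_{l+1}}$ into the orbit category $\Chow(k)_{R_{l+1}}/-\otimes T_{R_{l+1}}$, and then unwind the orbit category inside $\DM(k,R_{l+1})$. Throughout I write $R=R_{l+1}=\Z[\frac{1}{(l+1)!}]$.

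For the first step, the existence of the full exceptional collection $\langle E_1,\dots,E_m\rangle$ on $D^b(X)$ forces, by \cite[Lemma~5.1]{MarcolliTabuada}, an isomorphism $U(X)\cong\bigoplus_{i=1}^m U(\Spec k)$ of non-commutative motives, valid in $\KMM(k)$ and hence in $\KMM(k)_R$. For the second step I would observe that the $m$-fold coproduct $(\Spec k)^{\sqcup m}$ is an object of $\KM^{\leq d}_{(e)}(k)$: indeed $\Spec k$ embeds in $\PP^e_k$ and has dimension $0\le d$, so $\Spec k\in\Sm\Proj^{\leq d}_{(e)}(k)$, and $\KM^{\leq d}_{(e)}(k)$ is closed under finite coproducts by definition; since $\KM(k)$ is additive this coproduct is also an $m$-fold biproduct. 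As $\theta$ is $R$-linear, hence additive, it carries $(\Spec k)^{\sqcup m}$ to $\bigoplus_{i=1}^m U(\Spec k)$ and $X$ to $U(X)$. Because $\theta$ is fully faithful on $\KM^{\leq d}_{(e)}(k)_R$, the isomorphism of the first step and its inverse come from morphisms of $\KM^{\leq d}_{(e)}(k)_R$, and faithfulness forces these two morphisms to be mutually inverse; this yields an isomorphism $X_R\cong(\Spec k)^{\sqcup m}_R$ in $\KM^{\leq d}_{(e)}(k)_R$.

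Next I would apply the $R$-linear functor $\Phi_R\colon\KM^{\leq d}_{(e)}(k)_R\to\Chow(k)_R/-\otimes T_R$. By its construction $\Phi_R(X)=\pi(M(X)_R)$ and $\Phi_R(\Spec k)=\pi(M(\Spec k)_R)$, and additivity turns the biproduct decomposition into $\Phi_R((\Spec k)^{\sqcup m}_R)=\bigoplus_{i=1}^m\pi(M(\Spec k)_R)$; so one obtains $\pi(M(X)_R)\cong\bigoplus_{i=1}^m\pi(M(\Spec k)_R)$ in the orbit category. Finally I would construct an additive functor
\[
\rho\colon \Chow(k)_R/-\otimes T_R\longrightarrow \DM(k,R),\qquad \pi(N)\longmapsto\bigoplus_{j\in\Z}N(j)[2j],
\]
which sends a morphism $f=\{f_j\}_{j\in\Z}$ — with $f_j\in\Hom_{\Chow(k)_R}(N,N'\otimes T^j)=\Hom_{\DM(k,R)}(N,N'(j)[2j])$ and almost all $f_j=0$ — to the map whose restriction to the summand $N(i)[2i]$ is $\bigoplus_j f_j(i)[2i]\colon N(i)[2i]\to\bigoplus_j N'(i+j)[2i+2j]$. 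This is well defined because $\DM(k,R)$ admits countable direct sums that commute with Tate twists, and because the composition law of the orbit category recalled in~\eqref{TENSAI} is exactly matched by composition in $\DM(k,R)$. Applying $\rho$ to $\pi(M(X)_R)\cong\bigoplus_{i=1}^m\pi(M(\Spec k)_R)$ gives
\[
\bigoplus_{j\in\Z}M(X)_{R_{l+1}}(j)[2j]\;\cong\;\bigoplus_{i=1}^m\bigoplus_{j\in\Z}M(\Spec k)_{R_{l+1}}(j)[2j],
\]
and since $M(\Spec k)_{R_{l+1}}$ is the unit object $R_{l+1}$ this is exactly the claimed isomorphism in $\DM(k,R_{l+1})$.

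The substantive inputs are \cite[Lemma~5.1]{MarcolliTabuada} and the full faithfulness of $\theta$, both already available; I expect the step needing the most care to be the last one — the construction of $\rho$ and the verification that forming $\bigoplus_{j\in\Z}(-)(j)[2j]$ is compatible with the composition law~\eqref{TENSAI} of the orbit category — together with the routine but essential bookkeeping that $\theta$, $\Phi_R$ and $\rho$ are all additive, so that the rank-$m$ decomposition is preserved at every stage.
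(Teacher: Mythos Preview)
Your proposal is correct and follows the same three-step strategy as the paper: (i) use \cite[Lemma~5.1]{MarcolliTabuada} to decompose $U(X)$ in $\KMM(k)_R$; (ii) pull this back through the fully faithful $\theta$ to obtain $X_R\cong(\Spec k)^{\sqcup m}_R$ in $\KM^{\leq d}_{(e)}(k)_R$; (iii) push forward along $\Phi_R$ and reinterpret in $\DM(k,R)$. The paper's own proof compresses step~(iii) into a single sentence (``Applying $\Phi_{R_{l+1}}$ \ldots\ we obtain an isomorphism in $\DM(k,R_{l+1})$''), tacitly identifying an isomorphism in the orbit category with the corresponding isomorphism of $\bigoplus_{j\in\Z}(-)(j)[2j]$ in $\DM$; your explicit construction of the functor $\rho$ and the verification against~\eqref{TENSAI} make that passage rigorous, and your check that $(\Spec k)^{\sqcup m}$ lies in $\KM^{\leq d}_{(e)}(k)$ fills a small gap the paper leaves implicit.
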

\begin{proof}
The full exceptional collection induces an isomorphism of non-commutative motives (see \cite[Lemma~5.1]{MarcolliTabuada}) 
\[
U(X)_{R_{l+1}}\simeq \bigoplus_{i=1}^mU(k)_{R_{l+1}}.
\]
By Theorem~\ref{japanend} there is an isomorphism in $\KM(k)_{R_{l+1}}$
\[
X_{R_{l+1}}\simeq \coprod_{i=1}^{m} \Spec k_{R_{l+1}}
\]
Applying $R_{l+1}$-linear functor $\Phi_{R_{l+1}}$ to this isomorphism, we obtain an isomorphism in $\DM(k,{R_{l+1}})$:
\[
\bigoplus_{j\in\Z}M(X)_{R_{l+1}}(j)[2j]\simeq \bigoplus_{i=1}^m\bigoplus_{j\in\Z}{R_{l+1}}(j)[2j].
\]
This is what we want.
\end{proof}
By the same discussion as in \cite{MarcolliTabuada}, we provide the explicit formula. We need the following lemmas.
\begin{lemma}\label{trivial}
For a triple $A,B$, and $C$ in $\DM^\eff(k)$, and morphisms $F:A\to B$, $G:B\to C$ and $E:A \to C$ such that $E=G\circ F$, if $E$ is split injective then $F$ is split injective.
\[\xymatrix{
 & A \ar[ld]_{F} \ar[d]^{E}\\
B\ar[r]_{G} & C 
}\]
\end{lemma}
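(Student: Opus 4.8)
The plan is to unwind the definition of split injectivity and observe that the statement is a purely formal consequence, valid in any category and in particular in $\DM^\eff(k)$. Recall that a morphism $E\colon A\to C$ is split injective precisely when it admits a retraction, i.e.\ a morphism $s\colon C\to A$ with $s\circ E=\id_A$. So the first step is simply to fix such a retraction $s$ of $E$, which exists by hypothesis.

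The second step is to manufacture a retraction of $F$ out of $s$ and $G$. The natural candidate is the composite $s\circ G\colon B\to A$. Using associativity of composition in $\DM^\eff(k)$ together with the hypothesis $E=G\circ F$, one computes
\[
(s\circ G)\circ F=s\circ(G\circ F)=s\circ E=\id_A,
\]
so $s\circ G$ is a retraction of $F$. Hence $F$ is split injective, which is the assertion.

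There is no genuine obstacle here: the lemma holds for formal reasons in any category, so the only point requiring attention is that \emph{split injective} in this paper means exactly the existence of a one-sided left inverse, which is what makes the computation above conclusive. Note in particular that $G$ itself need not be split injective or split surjective; only $F$ inherits a retraction, built from the given data.
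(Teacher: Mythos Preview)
Your proof is correct and follows exactly the same approach as the paper's own argument: take a retraction $s$ of $E$ and observe that $s\circ G$ is a retraction of $F$ via $(s\circ G)\circ F = s\circ E = \id_A$. The only difference is notational (the paper writes $\tilde{E}$ for your $s$) and that you spell out the formal nature of the argument in slightly more detail.
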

\begin{proof}
We denote by $\tilde{E}$ by the split surjective from $C$ to $A$. Then we have $\tilde{E} \circ G\circ F=id_A$.
\end{proof}
\begin{lemma}\label{trivial2}
For a triple $A,B$, and $C$ in $\DM^\eff(k)$ and split injective maps $F:A \hookrightarrow C$ and $G:B \hookrightarrow C$. Suppose $\hom_{\DM^\eff}(A,B)=0$ and $\hom_{\DM^\eff}(B,A)=0$ then there is a split injective 
\[
A\oplus B \hookrightarrow C.
\]
\end{lemma}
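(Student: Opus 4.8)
The plan is to write down the obvious candidate splitting and check it works; this is a purely formal computation in the additive category $\DM^\eff(k)$, so there is no real difficulty. Denote by $F':C\to A$ a retraction of $F$ (so $F'\circ F=\id_A$) and by $G':C\to B$ a retraction of $G$ (so $G'\circ G=\id_B$); both exist by the hypothesis that $F$ and $G$ are split injective.

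First I would form the two morphisms
\[
\iota=\begin{pmatrix}F & G\end{pmatrix}:A\oplus B\to C,\qquad
\rho=\begin{pmatrix}F'\\ G'\end{pmatrix}:C\to A\oplus B,
\]
using that finite direct sums in $\DM^\eff(k)$ are biproducts, so that a morphism out of $A\oplus B$ is the same datum as a pair of morphisms out of $A$ and $B$, and dually a morphism into $A\oplus B$ is the same datum as a pair of morphisms into $A$ and $B$. Then I would compute the composite $\rho\circ\iota:A\oplus B\to A\oplus B$ via the usual matrix calculus, obtaining
\[
\rho\circ\iota=\begin{pmatrix}F'\circ F & F'\circ G\\ G'\circ F & G'\circ G\end{pmatrix}.
\]

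The key point is that the off-diagonal entries vanish: $F'\circ G$ lies in $\hom_{\DM^\eff}(B,A)=0$ and $G'\circ F$ lies in $\hom_{\DM^\eff}(A,B)=0$, by the two vanishing hypotheses. The diagonal entries are $\id_A$ and $\id_B$ by the choice of retractions. Hence $\rho\circ\iota=\id_{A\oplus B}$, so $\iota:A\oplus B\hookrightarrow C$ is split injective with retraction $\rho$, which is exactly the assertion.

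There is essentially no obstacle here; the only things to keep in mind are that both Hom-vanishing hypotheses are genuinely used (one for each off-diagonal entry), and that $\DM^\eff(k)$ being additive is what makes the matrix description of maps between biproducts legitimate. This is the standard argument, parallel to the discussion in \cite{MarcolliTabuada} referenced just before the lemma, and it is what feeds into the explicit motivic decomposition of Corollary~\ref{final}.
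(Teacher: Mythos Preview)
Your proof is correct and is essentially identical to the paper's: the paper takes the retractions $\tilde{F}:C\to A$, $\tilde{G}:C\to B$, observes that $\tilde{G}\circ F\in\hom_{\DM^\eff}(A,B)=0$ and $\tilde{F}\circ G\in\hom_{\DM^\eff}(B,A)=0$, and concludes that $(\tilde{F},\tilde{G}):C\to A\oplus B$ is split surjective, which is exactly your matrix computation phrased dually. The only difference is that you spell out the $2\times 2$ matrix explicitly, whereas the paper leaves that step implicit.
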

\begin{proof}
We denote by $\tilde{F}$ (resp. $\tilde{G}$) the split surjective $C \to A$ (resp. $C\to B$). Since we assume $\hom_{\DM^\eff}(A,B)=0$ and $\hom_{\DM^\eff}(B,A)=0$, we have $\tilde{G} \circ F=0$ and $\tilde{F} \circ G=0$. Thus the map 
\[
(\tilde{F},\tilde{G}):C \to A\oplus B 
\]
is split surjective.
\end{proof}
\begin{lemma}\label{OOOO}
For a motive $M \in \DM^\eff(k,R)$ and a $\textrm{PID}$ $R$, if there is a split injective map
\[
F:M \hookrightarrow \bigoplus_{i=0}^{d}R(i)[2i]^{\oplus m}
\]
in $\DM^\eff(k,R)$ for some $m\in\N$ and an integer $d$, then there are integer $w\in\N$ and integers $0 \leq i_v\leq d$ for $0\leq v \leq w$ such that there is an isomorphism
\[
M \simeq \bigoplus_{0\leq v \leq w}R(i_v)[2i_v]
\]
in $\DM^\eff(k,R)$.
\end{lemma}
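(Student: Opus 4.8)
The plan is to reduce the statement, via the structure of the endomorphism ring of a finite sum of Tate twists, to the fact that finitely generated projective modules over a PID are free. Write $N := \bigoplus_{i=0}^{d} R(i)[2i]^{\oplus m}$ and fix a retraction $G : N \to M$ of $F$, so that $G \circ F = \id_M$; then $e := F \circ G$ is an idempotent in $\operatorname{End}_{\DM^\eff(k,R)}(N)$, and $M$ is the retract of $N$ cut out by $e$. I will identify $e$ with an explicit matrix idempotent, split it, and read off $M$.

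First I would compute $\operatorname{End}_{\DM^\eff(k,R)}(N)$. By the cancellation theorem \cite{cancel} the canonical functor $\DM^\eff(k,R) \to \DM(k,R)$ is fully faithful, and the $\Hom$-groups out of $R$ in $\DM(k,R)$ are given by the (higher) Chow groups of $\Spec k$ \cite{V00b}; hence for $0 \le i,j \le d$,
\[
\Hom_{\DM^\eff(k,R)}\bigl(R(i)[2i],\, R(j)[2j]\bigr) \;\cong\; \Hom_{\DM(k,R)}\bigl(R,\, R(j-i)[2(j-i)]\bigr),
\]
which is $R$ when $i=j$ (it is $\CH^{0}(\Spec k)_R$) and $0$ when $i \neq j$ (for $j>i$ it is $\CH^{j-i}(\Spec k)_R = 0$ since $\Spec k$ is $0$-dimensional, and for $j<i$ it is a motivic cohomology group of $\Spec k$ in negative weight, hence $0$). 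Therefore $\operatorname{End}_{\DM^\eff(k,R)}(N) \cong \prod_{i=0}^{d} \operatorname{Mat}_m(R)$ as a ring, the $i$-th factor being $\operatorname{End}\bigl(R(i)[2i]^{\oplus m}\bigr)$.

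Then I would split the idempotent. Under this identification $e$ becomes a tuple $(e_0,\dots,e_d)$ of idempotents $e_i \in \operatorname{Mat}_m(R)$. Here the hypothesis that $R$ is a PID enters: $\Im(e_i)$ and $\Im(1-e_i)$ are finitely generated projective, hence free, of ranks $n_i$ and $m-n_i$; so $e_i$ is $\mathrm{GL}_m(R)$-conjugate to the standard rank-$n_i$ projection, and, realizing that conjugation by an automorphism of $R(i)[2i]^{\oplus m}$, the idempotent $e_i$ splits with image $R(i)[2i]^{\oplus n_i}$. Summing over $i$, the idempotent $e$ splits with image $N' := \bigoplus_{i=0}^{d} R(i)[2i]^{\oplus n_i}$; and since $M$ is the retract of $N$ cut out by $e$ (with $G\circ F = \id_M$ and $F\circ G = e$), composing $F$ and $G$ with the splitting maps of $e$ exhibits a canonical isomorphism $M \cong N'$. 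Relisting each $R(i)[2i]$ with its multiplicity $n_i$ gives the asserted form $M \simeq \bigoplus_{0\le v\le w} R(i_v)[2i_v]$ (with $M=0$ the degenerate case). I do not expect a serious obstacle: the only genuine input beyond formal manipulation is the $\Hom$-vanishing between distinct Tate twists, for which the cancellation theorem together with the vanishing of the motivic cohomology of $\Spec k$ outside weight $0$ is exactly what is needed, and the PID hypothesis is used precisely, and only, to split the matrix idempotents into free pieces.
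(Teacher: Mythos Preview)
Your proof is correct and takes a genuinely different route from the paper. Both arguments rest on the same two ingredients---the vanishing
\[
\Hom_{\DM^\eff(k,R)}\bigl(R(i)[2i],R(j)[2j]\bigr)\cong\begin{cases}R & i=j\\ 0 & i\neq j\end{cases}
\]
and the fact that finitely generated projectives over a PID are free---but they deploy them differently. The paper works constructively: for each $i$ it chooses an $R$-basis of $\Hom(R(i)[2i],M)\cong R^{\oplus m_i}$ (this is where the PID enters), assembles these into a map $P:\bigoplus_i R(i)[2i]^{\oplus m_i}\to M$, proves $P$ is split injective via two small diagram lemmas, and then shows $\Cone(P)=0$ by checking that any summand of $N$ with no maps from any $R(i)[2i]$ must vanish. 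Your approach is more structural: you compute $\operatorname{End}(N)\cong\prod_i\operatorname{Mat}_m(R)$ directly from the Hom vanishing, diagonalise the resulting block idempotent using the PID hypothesis, and invoke uniqueness of idempotent splittings. Your argument is shorter and avoids the auxiliary lemmas and the cone step; the paper's argument, on the other hand, makes the integers $m_i=\operatorname{rank}_R\Hom(R(i)[2i],M)$ appear naturally and never needs to name the endomorphism ring of $N$ explicitly.
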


\begin{proof}
We note that there is an equality of $R$-modules
\begin{equation}\label{FUFU}
\DM^\eff(k,R)(R(i)[2i],R(j)[2j])=\left\{
\begin{array}{ll}
R & i=j \\
0 & i\neq j.

\end{array}
\right.
\end{equation}

Firstly we prove that for an object $C\in\DM^\eff(k,R)$ satisfying that there is a split injective 
\[
C\hookrightarrow \bigoplus_{i=0}^{l} R(i)[2i]^{\oplus m}
\]
and 
\begin{equation}\label{condition2}
\DM^\eff(R(i)[2i],C)=0,
\end{equation}
for any $0\leq i \leq l$ then $C=0$. We denote by $G$ the split surjective map $\bigoplus_{i=0}^{l} R(i)[2i]^{\oplus m} \to C$. For any projection on the coordinate $\pi:R(j)[2j] \hookrightarrow \bigoplus_{i=0}^{d}R(i)[2i]^{\oplus m}$, by the condition~\eqref{condition2}, we know $G\circ \pi=0$. Thus the split surjective map $G=0$, i.e. $C=0$.

Let us prove the claim. We fix an integer $0\leq i \leq l$. The split injective map $F$ induces a split injective morphism of $R$-modules
\begin{equation}\label{HOSHIN}
\hom_{\DM^\eff}(R(i)[2i],M)\hookrightarrow \hom_{\DM^\eff}(R(i)[2i], \bigoplus_{j=0}^{l} R(j)[2j]^{\oplus m}) \overset{\eqref{FUFU}}{\simeq} R^{\oplus m},
\end{equation}
and since $R$ is PID, thus there is an integer $0 \leq m_i \leq m$ such that there is an isomorphism of $R$-modules
\begin{equation}\label{TOSHI}
   \hom_{\DM^\eff}(R(i)[2i],M)\simeq R^{\oplus m_i}.
\end{equation}
By \eqref{HOSHIN}, \eqref{TOSHI}, the split injective map $F$ induces a split injective map
\begin{equation}\label{PURIO}
    \tilde{F_i}:R^{\oplus m_i}\overset{\eqref{TOSHI}}{\simeq} \hom_{\DM^\eff}(R(i)[2i],M) \overset{\eqref{HOSHIN}}{\hookrightarrow} R^{\oplus m}.
\end{equation}
We denote by $\mathbf{e}_j\in 
R^{\oplus m_i}$ the $j$-th coordinate, i.e. $\mathbf{e}_j =(0,0,...,\overset{j}{1},..0)$. Since $\tilde{F}$ is split injective, there are elements $r_{j}\in R^{\oplus m}$ for $m_i+1 \leq j \leq m$ such that a set 
\[
\{\tilde{F}(\mathbf{e}_1),\tilde{F}(\mathbf{e}_2),..,\tilde{F}(\mathbf{e}_{m_i}),r_{m_i+1},..,r_{m}\}
\]
is a basis of $R^{\oplus m}$. 

We take 
\[
\oplus_{j=1}^{m_i} \mathbf{e}_j\in   \bigoplus_{j=1}^{m_i}R^{\oplus m_i}\overset{\eqref{TOSHI}}{\simeq}\bigoplus_{j=1}^{m_i} \hom_{\DM^\eff}(R(i)[2i],M) = \hom_{\DM^\eff}(R(i)[2i]^{\oplus m_i},M)
\]
and, by using isomorphism \eqref{FUFU}, we take
\[
E:R(i)[2i]^{\oplus m_i} \to R(i)[2i]^{\oplus m} \qquad (0,..,0,\overset{j}{1},0,..,0) \mapsto \tilde{F}(\mathbf{e}_j).
\]
Then there is a following commutative diagram:
\[
\xymatrix{
   &  R(i)[2i]^{\oplus m_i} \ar[d]^{E}\ar[ldd]_{\oplus_{j=1}^{m_i} \mathbf{e}_j}\\
   &  R(i)[2i]^{\oplus m} \ar[d]^{\text{nat.map}} \\
 M \ar[r]_-{F} & \bigoplus_{j=0}^lR(j)[2j]^{\oplus m} 
}\]
because for the projection on the $j$-th coordinate $\pi_j:R(i)[2i] \to R(i)[2i]^{\oplus m_i}$ we have
\[
F\circ\oplus_{j=1}^{m_i} \mathbf{e}_j\circ \pi_j=F\circ \mathbf{e}_j\overset{\eqref{PURIO}}{=}\text{nat.map}\circ \tilde{F}(\mathbf{E}_j)
\]
in $\hom_{\DM^\eff}(R(i)[2i],\bigoplus_{j=0}^lR(j)[2j]^{\oplus m})$. By lemma~\ref{trivial}, the map $\oplus_{j=1}^{m_i} \mathbf{e}_j$ is split injective since $E$ is split injective. We denote $\oplus_{j=1}^{m_i} \mathbf{e}_j$ by $P_i$.

Let us consider a map
\[
P=\oplus_{i=0}^l P_i:  \oplus_{i=0}^lR(i)[2i]^{\oplus m_i} \to M.
\]
Since each of $P_i$ is split injective, by Lemma~\ref{trivial2} and \eqref{FUFU}, the map $P$ is split injective. 

Consider $C:=\Cone(P:\oplus_{i=0}^lR(i)[2i]^{\oplus m_i} \to M)$. Since $M$ is a direct summand of $\bigoplus_{i=0}^{l} R(i)[2i]^{\oplus m}$, the object $C$ is also a direct summand of $\bigoplus_{i=0}^{l} R(i)[2i]^{\oplus m}$. For any $0\leq i \leq l$, by the defition of $P_i$ we have an isomorphism
\[
\hom(R(i)[2i],P):\hom(R(i)[2i],\oplus_{i=0}^lR(i)[2i]^{\oplus m_i}) \simeq \hom(R(i)[2i],M).
\]
Thus $\hom_{\DM^\eff}(R(i)[2i],C)=0$. Thus $C=0$.
\end{proof}

\begin{cor}
For a smooth projective variety $X$ in $\Sm\Proj^{\leq d}_{(e)}(k)$and $l\geq 2d+e$, if the derived category of $X$ has a full exceptional collection $<E_1,E_2,..,E_m>$, then there  is  a  choice  of  integers $r_1,r_2,...,r_m\in \{0,...,\dim X\}$ giving rise to a canonical isomorphism
\[
M(X)_{\Z[\frac{1}{(l+1)!}]}\simeq \bigoplus_{i=1}^m{\Z[\frac{1}{(l+1)!}]}(r_i)[2r_i].
\]
\end{cor}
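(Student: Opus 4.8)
The plan is to deduce this from Corollary~\ref{final} by isolating the weight-zero part of the motive of $X$ and then applying the diagonalisation Lemma~\ref{OOOO}. Throughout I would write $R = R_{l+1} = \Z[\tfrac1{(l+1)!}]$, which is a localisation of $\Z$, hence a PID, and set $d_X = \dim X \le d$.

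First I would invoke Corollary~\ref{final}, which provides an isomorphism $\bigoplus_{j\in\Z}M(X)_R(j)[2j]\simeq \bigoplus_{i=1}^m\bigoplus_{j\in\Z}R(j)[2j]$ in $\DM(k,R)$. The summand $j=0$ on the left realises $M(X)_R$ as a direct summand of the right-hand side; let $g$ be a retraction of the resulting split inclusion. To shrink this infinite sum of Tate twists to a finite, effective one, I would compute $\Hom_{\DM(k,R)}(R(p)[2p], M(X)_R)$: using Poincar\'e duality $M(X)_R^{\vee}\simeq M(X)_R(-d_X)[-2d_X]$ for the smooth projective variety $X$ together with $\Hom_{\DM(k,R)}(M(X)_R, R(q)[2q]) = \CH^q(X)_R$ (as already used in the proof of Corollary~\ref{primeministar}, via \cite{V00b}), this group is isomorphic to $\CH^{d_X-p}(X)_R$, which vanishes unless $0\le p\le d_X$. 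Hence each Tate summand $R(j)[2j]$ with $j\notin\{0,\dots,d_X\}$ is killed by $g$, so $g$ factors through the finite sub-sum $\bigoplus_{j=0}^{d_X}R(j)[2j]^{\oplus m}$, and the induced map $M(X)_R\hookrightarrow \bigoplus_{j=0}^{d_X}R(j)[2j]^{\oplus m}$ is still split injective. Since all twists are now non-negative, the cancellation theorem~\cite{cancel} lets me view this as a split injection in $\DM^{\eff}(k,R)$.

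Next, because $R$ is a PID I would apply Lemma~\ref{OOOO} to obtain $w\ge 0$ and integers $0\le i_0,\dots,i_w\le d_X$ with $M(X)_R\simeq \bigoplus_{v=0}^{w}R(i_v)[2i_v]$ in $\DM^{\eff}(k,R)$. It then remains to check $w+1=m$. For that I would substitute this decomposition back into the isomorphism of Corollary~\ref{final}: the left-hand side becomes $\bigoplus_{j\in\Z}R(j)[2j]^{\oplus(w+1)}$, so $\bigoplus_{j\in\Z}R(j)[2j]^{\oplus(w+1)}\simeq\bigoplus_{j\in\Z}R(j)[2j]^{\oplus m}$ in $\DM(k,R)$; applying $\Hom_{\DM(k,R)}(R,-)$, where only the $j=0$ summands contribute, gives $R^{\oplus(w+1)}\simeq R^{\oplus m}$, hence $w+1=m$ since $R$ is a nonzero commutative ring. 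Relabelling $r_i:=i_{i-1}\in\{0,\dots,d_X\}\subseteq\{0,\dots,\dim X\}$ yields the claimed formula.

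The step I expect to be the main obstacle is the first one: justifying that the decomposition can be performed entirely inside $\DM^{\eff}(k,R)$ with twists confined to $\{0,\dots,\dim X\}$. This rests on the $\Hom$-vanishing above, which needs Poincar\'e duality for $M(X)_R$ and the motivic-cohomology identification $\Hom_{\DM(k,R)}(M(X)_R,R(q)[2q])=\CH^q(X)_R$ over the ring $R$ rather than over $\Q$, together with a careful use of the cancellation theorem to move between $\DM$ and $\DM^{\eff}$. Once that is in place, the counting step is purely formal and Lemma~\ref{OOOO} is used as a black box.
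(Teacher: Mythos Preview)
Your argument is correct and follows essentially the same route as the paper: obtain a split injection of $M(X)_R$ into a finite sum of effective Tate twists from Corollary~\ref{final}, restrict the range of twists to $\{0,\dots,d_X\}$ via a Hom/Chow-group vanishing (the paper does this by citing the proof of Corollary~\ref{primeministar}, you do it dually via Poincar\'e duality on the retraction), and then apply Lemma~\ref{OOOO}. The only cosmetic difference is in the counting step: the paper compares ranks in the orbit category via $\pi$ and Theorem~\ref{japanend}, whereas you substitute back into Corollary~\ref{final} and apply $\Hom_{\DM}(R,-)$; both extract the same invariant and neither is materially harder than the other.
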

\begin{proof}
By Corollary~\ref{final}, we have a split injective map
\[
M(X)_{R_{l+1}} \hookrightarrow \bigoplus_{j=1}^{m} \bigoplus_{i\in\Z}R_{l+1}(i)[2i].
\]
By the discussion in the proof of Corollary~\ref{primeministar}, the map induces a split injective map
\[
M(X)_{R_{l+1}} \hookrightarrow \bigoplus_{j=1}^{m} \bigoplus_{i=0}^{d_X}R_{l+1}(i)[2i].
\]
By Lemma~\ref{OOOO}, there is an isomorphism
\[
M(X)_{R_{l+1}}\simeq \bigoplus_{v=1}^{w}R_{l+1}(i_v)[2i_v]
\]
    for some integers $0 \leq i_v \leq d_X$ and $w\in \N$. We will show that $w=m$. By Theorem~\ref{japanend} we have
\[
\pi(M(X)_{R_{l+1}}) \simeq \bigoplus_{i=1}^{m}\pi(R_{l+1})
\]
and $\pi(R_{l+1}(i_v)[2i_v])\simeq \pi(R_{l+1})$, we have
\[
\pi(M(X)_{R_{l+1}})\simeq  \bigoplus_{v=1}^{w} \pi(R_{l+1}(i_v)[2i_v]) \simeq  \bigoplus_{v=1}^{m} \pi(R_{l+1}).
\]
Thus we obtain $w=m$.\end{proof}

\appendix
\section{Pappas's integral Grothendieck Riemann-Roch theorem}
In this section, we recall Pappas's integral Grothendieck Riemann-Roch theorem \cite{Papas07}, and we show that in the case $\ch(k)=0$ Theorem~\ref{GRRfinite} is deduced by Pappas's integral Grothendieck Riemann-Roch. Let $X$ be a smooth variety, $E$ be a vector bundle. Let $a_1,.,a_r$ be Chern roots of $E$. For any $m$, we will denote the degree $m$ part of the symmetric polynomial $Td(E) \in \Q[a_1,...,a_r]^{Sym(r)}$ by $Td_m(E)$. By \cite[Lemma~1.7.3]{Hi}, the polynomial
\[
\mathfrak{Td}_m:=T_mTd_m(E)
\]
has integral coefficients, where we set $T_m:=\displaystyle\prod_{p:\text{prime number}} p^{[\frac{m}{p-1}]}$. By definition, we then have 
\[
T_{l}\cdot Td(E)=\sum_{m=0}^{\dim X}\frac{T_l}{T_m}\mathfrak{Td}_m
\]
in $\CH^*(X)$ for any $l \geq \dim X $.
For any $m\in\N$, we will also denote the degree $m$ part of the symmetric polynomial $\ch(E) \in \Q[a_1, \dots, a_r]^{Sym(r)}$ by $\ch_m(E)$. We set 
\[
\mathfrak{s}_m(E):=m!\ch_m(E).
\]
giving identities
\[
l!\cdot \ch(E)=\sum_{m=0}^{\dim X}\frac{l!}{m!}\mathfrak{s}_m
\]
in $\CH^*(X)$ for any $l \geq \dim X $. Set the polynomial
\[
\mathfrak{CT}_m(E):=T_m\cdot (\ch(E)Td(T_X))_m=\sum_{j=0}^{m}\frac{T_m}{j!\cdot T_{m-j}}\cdot(\mathfrak{s}_j(E) \mathfrak{Td}_{m-j}(T_X))
\]
in $\Q[a_1,...,a_r]^{Sym(r)}$, where $(\ch(E)Td(T_X))_m$ is the degree $m$ part of the polynomial $\ch(E)Td(T_X)$. By \cite[Lemma~2.1]{Papas07}, $\mathfrak{CT}_m$ is a homogeneous polynomial in $\Z[a_1,...,a_r]$. For any $l \geq \dim X$ there exist an identity
\begin{equation}\label{beastseven}
T_l\cdot \ch(E)Td(T_X)= \sum_{m=0}^{\dim X}\frac{T_l}{T_m}\mathfrak{CT}_m(E)
\end{equation}
in $\CH^*(X)$.

\begin{thm}[Integral Gorthendieck Riemann-Roch {\cite[Theorem~2.2]{Papas07}}]\label{PapasintegralGRR}
Suppose $k$ is a field of characteristic $0$. Let $X$ and $S$ be smooth quasi-projective varieties over $k$ and let $f:X \to S$ be a projective morphism over $k$. Set $d=d_f=\dim X-\dim S$ and suppose that $\sF$ is a coherent sheaf on $X$.\begin{itemize}
    \item[(a)] If $d\geq 0$ then 
    \[
    \frac{T_{d+n}}{T_n}\mathfrak{CT}_n(f_*[\sF]) = f_*(\mathfrak{CT}_{d+n}(\sF))
    \]
    holds in $\CH^n(S)$.

     \item[(b)] If $d < 0$ then 
     \[
    \mathfrak{CT}_n(f_*[\sF]) =\frac{T_{n}}{T_{d+n}}\ f_*(\mathfrak{CT}_{d+n}(\sF))
    \]
    holds in $\CH^n(S)$.
\end{itemize}
\end{thm}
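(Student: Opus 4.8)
The plan is to run the classical Borel--Serre proof of Grothendieck--Riemann--Roch (carried out integrally in Sections~\ref{26}--\ref{29}) while keeping careful track of denominators. Since $f$ is projective it factors as a closed immersion $j\colon X\hookrightarrow\PP^N_S$ followed by the projection $p\colon\PP^N_S\to S$, with $d_f=d_j+d_p$, $d_p=N\ge 0$, and $d_j=-\codim(X,\PP^N_S)<0$. The classes $\mathfrak{CT}_m$ are multiplicative along compositions: this follows from the additive formula \eqref{nakanosine}, the tensor product formula \eqref{otimes}, the relative tangent bundle sequence $0\to T_X\to j^*T_{\PP^N_S}\to N\to 0$, and $T_{\PP^N_S}\cong p^*T_S\oplus T_{\PP^N_S/S}$; granting this reduction it suffices to treat $f=p$ and $f=j$ separately. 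Integrality of both sides throughout is guaranteed by $\mathfrak{CT}_m\in\Z[a_1,\dots,a_r]$ (\cite[Lemma~2.1]{Papas07}, generalizing Hirzebruch's \cite[Lemma~1.7.3]{Hi}, ultimately the von Staudt--Clausen bound on the denominators of Todd polynomials), together with the monotonicity $T_n\mid T_{d+n}$ when $d\ge 0$ and $T_{d+n}\mid T_n$ when $d<0$ of $m\mapsto T_m$.

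For the projection $p\colon\PP^N_S\to S$ (so $d=N$), I would invoke the projective bundle formula: $K_0(\PP^N_S)$ is generated over $K_0(S)$ by the classes $[\sO(m)]$, so by additivity it is enough to take $\sF=q^*\sG\otimes\sO(m)$ with $\sG$ a vector bundle on $S$. Flat base change gives $p_*[\sF]=\chi(\PP^N,\sO(m))\cdot[\sG]$ in $K_0(S)$, so the assertion reduces to the numerical identity $\chi(\PP^N,\sO(m))=\deg\big((\ch(\sO(m))\,Td(T_{\PP^N}))_0\big)$, exactly as in the proof of Proposition~\ref{GRRprojection}. Expanding $\mathfrak{CT}_{d+n}(\sF)=T_{d+n}\,(\ch(\sF)\,Td(T_{\PP^N_S}))_{d+n}$, splitting off the tangent bundle of the base, integrating along $p$ via the projection formula \eqref{projectionch}, and matching degrees shows that the factor which does not cancel is precisely $T_{d+n}/T_n$, which is part~(a) in this case; part~(b) does not occur since $d\ge 0$.

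For the closed immersion $j\colon X\hookrightarrow Y:=\PP^N_S$ (so $d=d_j<0$), I would use deformation to the normal cone as in \eqref{HOIMI} and Proposition~\ref{GRRembedding} to reduce to the zero-section $X\hookrightarrow\PP_X(N\oplus 1)$, where $N$ is the normal bundle of $j$. There the Koszul complex resolves $j_*\sF$ by $\bigwedge^\bullet\sQ^\vee\otimes p^*\sF$, so that $\ch(j_*\sF)=\big(\sum_q(-1)^q\ch(\bigwedge^q\sQ^\vee)\big)\cdot\ch(p^*\sF)$; Example~\ref{exexex} rewrites the first factor as $c_d(\sQ)\,Td(\sQ)^{-1}$, and the self-intersection formula $j_*j^*\alpha=c_d(\sQ)\cap\alpha$ \eqref{granma} together with $0\to T_X\to j^*T_Y\to N\to 0$ then yields the Riemann--Roch identity in the form \eqref{matsu}. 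The crucial point is again the denominators: $Td(\sQ)^{-1}$ in degree $m$ has denominator dividing $T_m$, and after multiplying by the top Chern class, pushing forward (which shifts codimension by $d<0$), and comparing degrees on $X$ and on $Y$, the surviving factor is $T_n/T_{d+n}$, i.e.\ part~(b). Following Pappas, resolution of singularities and Bertini's theorem enter to replace a general coherent $\sF$ by (complexes of) vector bundles on smooth ambient spaces compatibly with $f_*$, and to cut down by generic smooth sections so that the deformation argument and the numerical input on $\PP^N$ are in range; this is the only use of $\ch(k)=0$.

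I expect the main obstacle to be not the form of the identities --- which is identical to rational Grothendieck--Riemann--Roch --- but the sharp denominator estimate: showing that after all the multiplications, pushforwards and degree shifts, the power of a prime $p$ one is forced to invert is at most $[\tfrac{d+n}{p-1}]-[\tfrac{n}{p-1}]$ in case~(a) (resp.\ $[\tfrac{n}{p-1}]-[\tfrac{d+n}{p-1}]$ in case~(b)), and no more. This relies on applying Hirzebruch's lemma to the composite classes $\mathfrak{CT}_m$ rather than to $Td$ alone, on the superadditivity of $m\mapsto[\tfrac{m}{p-1}]$ under the degree shifts induced by $f_*$, and --- to legitimately perform the d\'evissage that the rational argument takes for granted --- on the characteristic-$0$ tools of resolution and Bertini.
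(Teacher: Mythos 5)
This statement is not proved in the paper at all: it is Pappas's theorem, quoted verbatim from \cite[Theorem~2.2]{Papas07} in the appendix precisely because the paper's own methods yield only the weaker Theorem~\ref{GRRfinite}, with the extra factors $l!^2\cdot T_l^2$ and the hypothesis $l\geq d+e$. There is therefore no internal proof to compare against, and your proposal has to be judged on its own terms.

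As an outline it reproduces the standard architecture (factor $f$ through $\PP^N_S$; handle the projection via the projective bundle formula and $\chi(\PP^N,\sO(m))$; handle the immersion via deformation to the normal cone, the Koszul resolution and Example~\ref{exexex}; control denominators by Hirzebruch's lemma), which is also the architecture of Sections~\ref{26}--\ref{29}. But it is not a proof, for two concrete reasons. First, the entire content of the theorem beyond rational Grothendieck--Riemann--Roch is the sharp degree-by-degree denominator bound, and your final paragraph explicitly defers it (``I expect the main obstacle to be \dots and no more''); everything you actually carry out is already a consequence of the rational theorem together with the integrality of $\mathfrak{CT}_m$ from \cite[Lemma~2.1]{Papas07}. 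Second, the reduction to the two special cases is not the formality you present it as. Composing case (a) for $p$ with case (b) for $j$ gives
\[
\frac{T_{N+n}}{T_n}\,\mathfrak{CT}_n(f_*[\sF]) \;=\; \frac{T_{N+n}}{T_{d+n}}\, f_*\bigl(\mathfrak{CT}_{d+n}(\sF)\bigr)
\]
in $\CH^n(S)$, and to extract the asserted identity one must cancel the integer $\tfrac{T_{N+n}}{T_{d+n}}$ (resp.\ $\tfrac{T_{N+n}}{T_{n}}$) in an integral Chow group, which may have torsion; the reason (a) and (b) place the integral multiplier on opposite sides is exactly that division by integers is not available integrally. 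This is the same obstruction that prevents the paper's Theorem~\ref{GRRfinite} from simply shedding its $l!^2T_l^2$ factors. A genuine proof must either establish the special cases in a form strong enough to compose without division or argue differently, and the role you assign to resolution of singularities and Bertini (``to replace $\sF$ by complexes of vector bundles \dots and to cut down by generic sections'') is too vague to certify that the characteristic-zero input closes these gaps where they actually occur.
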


Over a characteristic zero base field, Pappas's integral Grothendieck Riemann-Roch leads to the following strengthening of our integral Grothendieck Riemann-Roch Theorem~\ref{GRRfinite} (our theorem needs $l \geq d + e$ where $X, Y$ are of dimension $\leq d$ and embeddable in $\PP^e$).

\begin{thm}\label{PAPAKONT}
We assume that the base field $k$ is of characteristic $0$. Let $f:X \to Y$ be a projective morphism of smooth projective varieties over a field $k$. For any $x \in K_0(X)$, there exists an equality 
\[
f_*\bigl(T_l \cdot \ch(x)Td(T_X)\bigr) = T_l \cdot \ch(f_*x)Td(T_Y)
\]
in $\CH^*(Y)$ for any $l\geq \max\{\dim X,\dim Y\}$.
\end{thm}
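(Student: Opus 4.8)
The plan is to deduce the theorem directly from Pappas's Theorem~\ref{PapasintegralGRR} by comparing graded pieces, after one routine reduction. Since $\ch$ is additive on $K_0$, since $Td(T_X)$ is a fixed class, and since $f$ is proper so that $f_*$ is additive on both $\CH_*$ and $K_0$, both sides of the claimed identity are additive in $x\in K_0(X)$; as $X$ is smooth, $K_0(X)$ is generated by classes of coherent sheaves, so it is enough to treat $x=[\sF]$. Here $\mathfrak{CT}_m(\sF)$ and $\mathfrak{CT}_m(f_*[\sF])$ are defined via finite locally free resolutions and additive extension to $K_0$, exactly as in Pappas's formulation, and the graded expansion \eqref{beastseven} extends to $K_0$-classes by additivity of $\ch$ and $\mathfrak{CT}_m$.

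Next I would pass to graded pieces. Write $d_X=\dim X$, $d_Y=\dim Y$ and $d=d_X-d_Y$. Proper pushforward shifts the codimension grading by $d$, i.e.\ $f_*\bigl(\CH^j(X)\bigr)\subseteq\CH^{j-d}(Y)$, so $(f_*\beta)_n=f_*(\beta_{n+d})$ for every $\beta\in\CH^*(X)$. Applying \eqref{beastseven} on $X$ (valid since $l\ge d_X$), the degree-$n$ part of the left-hand side is
\[
\bigl(f_*(T_l\cdot\ch(\sF)Td(T_X))\bigr)_n=f_*\bigl((T_l\cdot\ch(\sF)Td(T_X))_{n+d}\bigr)=\frac{T_l}{T_{n+d}}\,f_*\bigl(\mathfrak{CT}_{n+d}(\sF)\bigr),
\]
using that $0\le n+d\le d_X$ and $T_{n+d}\mid T_l$; applying \eqref{beastseven} on $Y$ (valid since $l\ge d_Y$), the degree-$n$ part of the right-hand side is $\tfrac{T_l}{T_n}\mathfrak{CT}_n(f_*[\sF])$. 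Thus it remains to prove, for every $n\ge 0$,
\[
\frac{T_l}{T_{n+d}}\,f_*\bigl(\mathfrak{CT}_{n+d}(\sF)\bigr)=\frac{T_l}{T_n}\,\mathfrak{CT}_n(f_*[\sF])\qquad\text{in }\CH^n(Y).
\]

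This is immediate from Theorem~\ref{PapasintegralGRR}. If $d\ge 0$, part~(a) gives $f_*(\mathfrak{CT}_{d+n}(\sF))=\tfrac{T_{d+n}}{T_n}\mathfrak{CT}_n(f_*[\sF])$; multiplying by the integer $T_l/T_{d+n}$ yields the claim. If $d<0$ and $n\ge -d$, part~(b) gives $\mathfrak{CT}_n(f_*[\sF])=\tfrac{T_n}{T_{d+n}}f_*(\mathfrak{CT}_{d+n}(\sF))$; multiplying by the integer $T_l/T_n$ and using $\tfrac{T_l}{T_n}\cdot\tfrac{T_n}{T_{d+n}}=\tfrac{T_l}{T_{d+n}}$ yields the claim. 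Finally, if $d<0$ and $0\le n<-d$, then $\mathfrak{CT}_{n+d}(\sF)=0$ as a homogeneous polynomial of negative degree, while $\ch(f_*[\sF])$ is the Chern character of a class supported on $f(X)$, which has dimension $\le d_X$, so its components lie in codimension $\ge d_Y-d_X=-d$ and hence $(\ch(f_*[\sF])Td(T_Y))_n=0$; both sides vanish. The argument is pure bookkeeping, the only genuine input being Pappas's theorem; the points needing a moment's care are the support vanishing disposing of the range $0\le n<-d$ when $\dim X<\dim Y$, and checking that the ratios $T_l/T_{n+d}$, $T_l/T_n$, $T_n/T_{n+d}$ are honest integers, which holds because $T_a\mid T_b$ whenever $a\le b$.
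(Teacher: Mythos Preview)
Your proof is correct and follows essentially the same approach as the paper: both deduce the identity from Pappas's Theorem~\ref{PapasintegralGRR} by comparing graded pieces via \eqref{beastseven} and clearing denominators with the integers $T_l/T_n$ and $T_l/T_{n+d}$. You are more explicit than the paper about the grading shift under $f_*$ and about the boundary range $0\le n<-d$ when $d<0$; note that the paper simply reads Pappas's part~(b) as valid for all $n\ge 0$ (with $\mathfrak{CT}_m=0$ for $m<0$), which already forces $\mathfrak{CT}_n(f_*[\sF])=0$ integrally in that range and avoids the support argument.
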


\begin{proof}
We denote $\dim X$ by $d_X$ and $\dim Y$ by $d_Y$. For any $l \geq \max\{\dim X, \dim Y\}$, in the case $d\geq 0$, if we multiply equations in Theorem~\ref{PapasintegralGRR} by $\frac{T_l}{T_{d+n}}\in \Z$, then we have
\[
    \frac{T_{l}}{T_n}\mathfrak{CT}_n(f_*x) = \frac{T_{l}}{T_{d+n}} f_*(\mathfrak{CT}_{d+n}(x))
\]    
in $\CH^*(Y)$. Thus by the equation \eqref{beastseven}, we obtain the claim. In the case $d < 0$, if we multiply equations in Theorem~\ref{PapasintegralGRR} by $\frac{T_l}{T_{n}}\in \Z$ we have
\[
\frac{T_{l}}{T_{n}}\mathfrak{CT}_n(f_*x) =\frac{T_{l}}{T_{d+n}}\ f_*(\mathfrak{CT}_{d+n}(x))
\]
Thus by the equation \eqref{beastseven}, we obtain the claim.
\end{proof}
We denote by $\Sm\Proj^{\leq d}(k)$ the full subcategory of $\Sm\Proj(k)$ such that dimension of objects is less than or equal to $d$. By the same discussion in the proof of Theorem~\ref{japanend}, Theorem~\ref{PAPAKONT} induces the following.  
\begin{cor}\label{A33}
We assume that the base field $k$ is of characteristic $0$. For natural numbers $d$ and $l\geq 3d$, there is a ${\Z[\frac{1}{(l+1)!}]}$-linear functor 
\[
\Phi_{\Z[\frac{1}{(l+1)!}]}:\KM^{\leq d}_{(e)}(k)_{\Z[\frac{1}{(l+1)!}]} \to \Chow(k)_{\Z[\frac{1}{(l+1)!}]}/-\otimes T_{\Z[\frac{1}{(l+1)!}]}
\]
where $T_{\Z[\frac{1}{(l+1)!}]}$ is the Tate motive. The functor $\Phi_{\Z[\frac{1}{(l+1)!}]}$ making the following diagram commute:
\[
\xymatrix{
{\textbf{dgcat}}(k) \ar[d]_{U}&&\Sm\Proj^{\leq d}_{(e)}(k)\ar[d] \ar[rr] \ar[ll]_{perf_{dg}(-)} & & \Sm\Proj(k) \ar[d]^{\pi(M(-))}\\
\KMM(k)_{\Z[\frac{1}{(l+1)!}]} &&\KM^{\leq d}_{(e)}(k)_{\Z[\frac{1}{(l+1)!}]} \ar[rr]_-{\Phi_{{\Z[\frac{1}{(l+1)!}]}}} \ar[ll]^{\theta} & &  \Chow(k)_{\Z[\frac{1}{(l+1)!}]}/-\otimes T_{\Z[\frac{1}{(l+1)!}]}
}
\]
where $\theta$ is the natural fully faithful functor. After applying $-\otimes \Q$, the lower row is compatible with the fully faithful functor $\Chow(k)_{\Q}/-\otimes T_{\Q} \to \KMM(k)_{\Q}$ described in \cite[section~8]{Tab14}.
\end{cor}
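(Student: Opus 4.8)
The plan is to run the proof of Theorem~\ref{japanend} essentially verbatim, the only difference being that every appeal there to the integral Grothendieck Riemann--Roch for closed embeddings and for projections (Propositions~\ref{GRRembedding} and \ref{GRRprojection}, together with the factorization of $p_{13}$ into an embedding followed by a projection) is replaced by a single appeal to Pappas's characteristic-$0$ integral Grothendieck Riemann--Roch as packaged in Theorem~\ref{PAPAKONT}. This is also why the embedding dimension $e$ disappears from the numerical hypothesis: in characteristic $0$ one never passes through an auxiliary $\PP^e$, so only the dimensions of the varieties enter. I would first record that, writing $R = R_{l+1} = \Z[\frac{1}{(l+1)!}]$, both $l!$ and $T_l$ are units in $R$, since every prime $p$ occurring in $T_l = \prod_p p^{[l/(p-1)]}$ satisfies $p-1 \leq l$, hence divides $(l+1)!$; consequently $\ch(E)$ and $Td(E)$ are well defined with $R$-coefficients, and, by additivity of the Chern character, so is $\ch$ on $K_0(-)_R$. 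One then sets, exactly as in Theorem~\ref{japanend}: $\theta(X) = U(X)$, $\theta(a) = a$; $\Phi_R(X) = \pi(M(X)_R)$; and $\Phi_R(\alpha) = \ch(\alpha)\cdot p_2^*Td(T_Y)$ for $\alpha \in K_0(X\times Y)_R$, with $p_2: X\times Y\to Y$ the projection. Additivity of $\ch$ makes $\Phi_R$ compatible with finite coproducts, so it is defined on all of $\KM^{\leq d}_{(e)}(k)_R$.

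The core step is the functoriality of $\Phi_R$. For a triple $(X,Y,Z)$ of varieties of dimension $\leq d$, the projection $p_{13}: X\times Y\times Z \to X\times Z$ has source of dimension $\leq 3d$ and target of dimension $\leq 2d$, so Theorem~\ref{PAPAKONT} applies exactly when $l\geq 3d$; granting the resulting identity
\[
\ch\bigl({p_{13}}_*(p_{12}^*a\otimes p_{23}^*b)\bigr)\cdot Td(T_{X\times Z}) = {p_{13}}_*\bigl(\ch(p_{12}^*a\otimes p_{23}^*b)\cdot Td(T_{X\times Y\times Z})\bigr)
\]
in $\CH^*(X\times Z)_R$, the computation of $\Phi_R(b\circ a) = \Phi_R(b)\circ\Phi_R(a)$ --- using $T_{X\times Y\times Z} = p_{12}^*p_X^*T_X \oplus p_{12}^*p_Y^*T_Y \oplus p_{23}^*q_Z^*T_Z$, the projection formula, and reordering --- proceeds word for word as in Theorem~\ref{japanend}. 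The normalization $\Phi_R(\Delta_*[\sO_X]) = [\Delta] = \id_{\Phi_R(X)}$ is handled the same way, applying Theorem~\ref{PAPAKONT} to the diagonal $\Delta: X\hookrightarrow X\times X$ (where already $l\geq 2d$ suffices), using $T_{X\times X} = p_1^*T_X\oplus p_2^*T_X$, and cancelling $p_1^*Td(T_X)$ via the projection formula.

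Next I would verify the one feature of the diagram that is genuinely new relative to Theorem~\ref{japanend}: commutativity of the square formed by the forgetful functor $\Sm\Proj^{\leq d}_{(e)}(k)\to\Sm\Proj(k)$ and $\pi(M(-)): \Sm\Proj(k)\to\Chow(k)_R/-\otimes T_R$. On objects this is the definition of $\Phi_R$. On a morphism $f: X\to Y$, its image in $\KM(k)$ is the class of the structure sheaf of the graph $\Gamma_f\subset X\times Y$, and Theorem~\ref{PAPAKONT} applied to the closed immersion $(\id_X,f): X\hookrightarrow X\times Y$ collapses $\Phi_R$ of that class --- by the same manipulation used for the identity --- to the graph cycle $[\Gamma_f]$, which is precisely the correspondence representing $M(f)$, i.e. the image of $f$ under $\pi(M(-))$. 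The left-hand square, involving $U$, $\theta$ and $perf_{dg}(-)$, is identical to the one in Theorem~\ref{japanend}, and the compatibility after $-\otimes\Q$ of the lower row with the fully faithful functor $\Chow(k)_\Q/-\otimes T_\Q\to\KMM(k)_\Q$ of \cite[section~8]{Tab14} is inherited unchanged.

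I do not anticipate a genuine obstacle: the substantive input, Theorem~\ref{PAPAKONT}, is already established, and everything else is the bookkeeping from the proof of Theorem~\ref{japanend}. The two points warranting care are the dimension count that pins the hypothesis to $l\geq 3d$ --- it comes solely from $\dim(X\times Y\times Z)\leq 3d$ in the functoriality step, the other applications of Theorem~\ref{PAPAKONT} needing only $l\geq 2d$ --- and the check that $l!$ and $T_l$ are invertible in $R_{l+1}$ so that $\ch$ and $Td$ descend; once these are in place, each remaining step is routine.
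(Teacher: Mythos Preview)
Your proposal is correct and follows essentially the same approach as the paper: the paper's entire proof is the single sentence ``By the same discussion in the proof of Theorem~\ref{japanend}, Theorem~\ref{PAPAKONT} induces the following,'' and what you have written is precisely a careful unpacking of that sentence. Your explicit dimension count explaining the bound $l\geq 3d$ (coming from $\dim(X\times Y\times Z)\leq 3d$ in the functoriality step) and your verification of the additional right-hand square involving $\pi(M(-))$ via the graph-cycle computation are details the paper leaves implicit, but they are the natural elaborations and introduce no new ideas.
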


\bibliography{bib}
\bibliographystyle{alpha}

%

%

\end{document}